\newtheorem{theorem}{Theorem}
\newtheorem{proposition}{Proposition}
\newtheorem{lemma}{Lemma}
\newcommand{\zz}{\mathbf z}
\newcommand{\xx}{\mathbf x}
\newcommand{\yy}{\mathbf y}
\newcommand{\ww}{\mathbf w}
\begin{document}

\title[On deformations of isolated singularities of mixed polynomials]{On deformations of isolated singularities of polar weighted homogeneous mixed polynomials}
\author{Kazumasa Inaba} 
\address{Mathematical Institute, Tohoku University, Sendai 980-8578, Japan}
\email{sb0d02@math.tohoku.ac.jp}

\subjclass[2000]{58K60, 58K15, 37P05}

\keywords{Deformation, mixed polynomial, polar weighted homogeneous}


\begin{abstract}
In the present paper, we deform isolated singularities 
of $f\overline{g}$, where $f$ and $g$ are $2$-variable weighted homogeneous complex polynomials,  
and show that there exists a deformation of $f\overline{g}$  
which has only 
indefinite fold singularities and mixed Morse singularities. 
\end{abstract}

\maketitle


\section{Introduction}
Let $f(\zz)$ be a complex polynomial of variables $\zz = (z_{1}, \dots, z_{n})$. 
A \textit{deformation of $f(\zz)$} is a polynomial mapping 
$F_{t} : \Bbb{C}^{n} \times \Bbb{R} \rightarrow \Bbb{C}, (\zz, t) \mapsto F_{t}(\zz)$, with $F_{0}(\zz) = f(\zz)$. 
Assume that the origin $\mathbf{o}$ is an~isolated singularity of $f(\zz)$. 
For complex singularities, 
it is known that there exist a neighborhood $U$ of the origin and a deformation $F_{t}$ of $f(\zz)$ 
such that $F_{t}(\zz)$ is a complex polynomial and any singularity of $F_{t}(\zz)$ is a Morse singularity in $U$ 
for any $0 < t <<1$ \cite[Chapter 4]{Lg}. 
Here \textit{a Morse singularity} is the singularity of the polynomial map 
$f(\zz) = z_{1}^{2} + \cdots + z_{n}^{2}$ at the origin. 
Let $\rho_{1}(\xx, \yy)$ and $\rho_{2}(\xx, \yy)$ be real polynomial maps from $\Bbb{R}^{2n}$ to $\Bbb{R}$ 
of variables $\xx = (x_{1}, \dots, x_{n})$ and $\yy = (y_{1}, \dots, y_{n})$. 
Then these real polynomials 
define a polynomial 
of variables $\zz = (z_{1}, \dots, z_{n})$ and $\bar{\zz} = (\bar{z}_{1}, \dots, \bar{z}_{n})$ as 
\[
P(\zz, \bar{\zz}) := \rho_{1}\Bigl(\frac{\zz + \bar{\zz}}{2}, \frac{\zz - \bar{\zz}}{2i}\Bigr) +i\rho_{2}\Bigl(\frac{\zz + \bar{\zz}}{2}, \frac{\zz - \bar{\zz}}{2i}\Bigr), 
\]
where $z_j = x_j + iy_j \ (j =1, \dots, n)$. 
The polynomial $P(\zz, \bar{\zz})$ is called \textit{a mixed polynomial}. 
M. Oka introduced the terminology of mixed polynomials and proposed a wide class of mixed polynomials which 
admit 
Milnor fibrations,
see for instance \cite{O1, O2}. 

Let $\ww$ be an isolated singularity of a mixed polynomial $P(\zz, \bar{\zz})$, $c = P(\ww, \bar{\ww})$ and 
$S_{\ww}^{2n-1}$ be the~$(2n-1)$ - dimensional sphere centered at $\ww$. 
If the link $P^{-1}(c) \cap S_{\ww}^{2n-1}$ is isotopic to the link defined by a complex Morse singularity as an oriented link, 
we say that $\ww$ is \textit{a mixed Morse singularity}. 
In~\cite[Theorem 1]{In1}, 
\cite[Corollary 1, 2]{In2}, 
there exist isolated singularities of mixed polynomials whose homotopy types 
of the vector fields introduced in \cite{NR1} are different from 
those of complex polynomials.   
Thus there exist isolated singularities of real polynomial maps which cannot deform  
mixed  
Morse singularities.  

Let $C^{\infty}(X, Y)$ be the set of smooth maps from $X$ to $Y$, where 
$X$ is a $2n$-dimensional manifold and $Y$ is a $2$-dimensional manifold. 
It is known that the subset of smooth maps from $X$ to $Y$ which have 
only definite fold singularities, indefinite fold singularities or cusps 
is open and dense in $C^{\infty}(X, Y)$ topologized with 
the $C^{\infty}$-topology. 
Moreover definite fold singularities and cusps can be eliminated by homotopy under some conditions 
\cite[Theorem~1,~2]{L1}, 
\cite[Theorem 2.6]{S}. 
In $\dim X = 4$, the fibration with only indefinite fold singularities and Morse singularities is called 
\textit{a broken Lefschetz fibration}, which is recently studied in several papers, 
see for instance \cite[Theorem 1.1]{B}, 
\cite[Theorem 1.1]{G}, (cf. \cite[Theorem~1]{A}). 
We are interested in making deformations of singularities with only indefinite fold singularities and 
mixed Morse singularities. 
This deformed map can be topologically regarded as a~broken Lefschetz fibration. 
If any singularity of a $C^{\infty}$-map $f : X \rightarrow Y$ is an indefinite fold singularity 
or a mixed Morse singularity, 
we call $f$ \textit{a~mixed broken Lefschetz fibration}.

To know if a smooth map $f : X \rightarrow Y$ has only fold singularities or cusps, 
we observe $f$ in the bundle of $r$-jets. 
We introduce the bundle $J^{r}(X, Y)$ of $r$-jets and its submanifolds 
$S_{k}(X, Y)$ and $S_{1}^{2}(X, Y)$ for $k= 1, 2$. 
Let $j^{r}f(p)$ be the $r$-jet of $f$ at $p$ and set 
\[
J^{r}(X, Y) := \bigcup_{(p, q) \in X \times Y} J^{r}(X, Y, p, q), 
\]
where $J^{r}(X, Y, p, q) = \{ j^{r}f(p) \mid f(p) = q \}$. 
The set $J^{r}(X, Y)$ is called \textit{the bundle of $r$-jets of maps from $X$ into $Y$}. 
It is known that $J^{r}(X, Y)$ is a~smooth manifold. 
The \textit{$r$-extension} $j^{r}f : X \rightarrow J^{r}(X, Y)$ of $f$ 
is defined by 
$p \mapsto j^{r}f(p)$ where $p \in X$. 
The $1$-jet space is the~$(6n + 2)$-dimensional smooth manifold and 
the $1$-extension $j^{r}f$ of $f$ is a smooth map. 
We define a codimension $(2n - 2 + k)k$-submanifold of $J^{1}(X, Y)$ for $k = 1, 2$ as follows: 
\[
S_{k}(X, Y) = \{ j^{1}f(p) \in J^{1}(X, Y) \mid 
\text{rank}\>df_{p} = 2-k \}. 
\]

A smooth map $f : X \rightarrow Y$ is said to be \textit{generic} 
if $f$ satisfies the following conditions: 
\begin{enumerate}
\item
$j^{1}f$ is transversal to $S_{1}(X, Y)$ and $S_{2}(X, Y)$, 
\item
$j^{2}f$ is transversal to $S_{1}^{2}(X, Y)$, 
\end{enumerate}
where $S_{1}(f) = \{ p \in X \mid \text{rank}\>df_{p} = 1 \}, S_{1}^{2}(f) = S_{1}(f | S_{1}(f))$ and  
$S_{1}^{2}(X, Y)$ 
is defined as follows: 
\[
S_{1}^{2}(X, Y) = 
\left\{j^{2}f(p) \in J^{2}(X, Y) \;\left|\; 
\begin{aligned}
& j^{1}f(p) \in S_{1}(X, Y), \\  
& j^{1}f(p)~\text{is transversal to}~S_{1}(X, Y), \\
& \text{rank}(f~|~S_{1}(f))(p)~=~0 \\  
\end{aligned}
\right.\right\}. 
\]
It is well-known that 
a smooth map $f : X \rightarrow Y$ is generic if and only if each singularity of $f$ 
is either a~fold singularity or a cusp.  
Here \textit{a fold singularity} is the singularity of 
$(x_{1}, \dots, x_{2n}) \mapsto (x_{1}, \sum_{j=2}^{2n}\pm x_{j}^{2})$ 
and \textit{a cusp} is the singularity of 
$(x_{1}, \dots, x_{2n}) \mapsto (x_{1}, \sum_{j=3}^{2n}\pm x_{j}^{2} + x_{1}x_{2} + x_{2}^{3})$, 
where $(x_{1}, \dots, x_{2n})$ are the coordinates centered at the singularity. 
If the coefficients of $x_j$ for $j = 2, \dots, 2n$ is 
either all positive or all negative, we say that 
$x$ is a \textit{definite fold} singularity, 
otherwise it is an \textit{indefinite fold} singularity. 

Now we state the main theorems. 
Let $f(\zz)$ and $g(\zz)$ be weighted homogeneous complex polynomials. 
Assume $f(\zz)$ and $g(\zz)$ have same weights. 
Then $f(\zz)\overline{g(\zz)}$ satisfies 
\[
f(c \circ \zz)\overline{g(c \circ \zz)} = c^{pq(m-n)}f(\zz)\overline{g(\zz)},  
\]
where $c \circ \zz = (c^qz_{1}, c^{p}z_{2}), c \in \Bbb{C}^{*}$ and 
$pqm$ and $pqn$ are the degrees of the $\Bbb{C}^{*}$-action of $f(\zz)$ and $g(\zz)$ respectively. 
Then we have the Euler equality: 
\[
(pqm)f(\zz) = qz_{1}\frac{\partial f}{\partial z_{1}} + pz_{2}\frac{\partial f}{\partial z_{2}}, \ \
(pqn)g(\zz) = qz_{1}\frac{\partial g}{\partial z_{1}} + pz_{2}\frac{\partial g}{\partial z_{2}}. 
\]
The mixed polynomial $f(\zz)\overline{g}(\zz)$ is \textit{a polar and radial weighted homogeneous mixed polynomial}, 
see Section $2.2$ for the definitions. 
Polynomials of this type admit Milnor fibrations \cite{RSV, C, P, O1, O2}. 

We study singularities appearing in a deformation $\{F_{t}\}$ of $f(\zz)\overline{g}(\zz)$  
for any $0 < t << 1$. 
The main theorem is the following. 

\begin{theorem}\label{thm1}
Let $f(\zz)$ and $g(\zz)$ be $2$-variable convenient 
weighted homogeneous complex polynomials such that $f(\zz)\overline{g}(\zz)$ has an isolated singularity at $\mathbf o$ and 
$U$ be a sufficiently small neighborhood of $\mathbf o$. 
Then there exists a deformation $F_{t}(\zz)$ of $f(\zz)\overline{g}(\zz)$ such that 
any singularity of $F_{t}(\zz)$ is 
an indefinite fold singularity
in $U \setminus \{\mathbf{o}\}$ for any $0 < t <<1$. 
\end{theorem}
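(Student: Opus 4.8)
\emph{Proof sketch.} The plan is to normalize $f,g$, to deform $h:=f\overline{g}$ within a family compatible with its polar $S^{1}$–symmetry so that the singularities it creates cluster near $\mathbf o$, and then to identify those singularities by a second–order computation along the $S^{1}$–orbits. First, since $f,g$ are convenient weighted homogeneous in two variables with the common weights $(q,p)$, $\gcd(p,q)=1$, the substitution $(u,v)=(z_{1}^{\,p},z_{2}^{\,q})$ gives $f=c\prod_{i=1}^{m}(z_{1}^{\,p}-\gamma_{i}z_{2}^{\,q})$ and $g=c'\prod_{j=1}^{n}(z_{1}^{\,p}-\delta_{j}z_{2}^{\,q})$ with $\gamma_{i},\delta_{j}\in\Bbb{C}^{*}$. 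By the mixed criticality criterion — $\zz$ is critical for $h$ iff $\overline{\bar\partial h(\zz)}=\lambda\,\partial h(\zz)$ for some $\lambda\in S^{1}$, or $\partial h(\zz)=\bar\partial h(\zz)=0$ — the origin is an isolated singularity precisely when the root multisets $\{\gamma_{i}\},\{\delta_{j}\}$ are disjoint, and then $h$ has no critical point on $U\setminus\{\mathbf o\}$: it is an (anti)holomorphic submersion along $\{f=0\}\cup\{g=0\}$, and off it criticality would force $f^{\,n}=(\mathrm{const})\,g^{\,m}$. Being polar and radial weighted homogeneous, $h$ is equivariant for the polar action $(e^{iq\theta}z_{1},e^{ip\theta}z_{2})$ (with a rotation on the target), and its singular set is a cone on $\mathbf o$.

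Next, choose an $S^{1}$–equivariant deformation with $F_{0}=h$, for instance by perturbing the branches,
\[
F_{t}(\zz)=c\prod_{i=1}^{m}\bigl(z_{1}^{\,p}-\gamma_{i}z_{2}^{\,q}-t\,a_{i}\bigr)\cdot\overline{c'\prod_{j=1}^{n}\bigl(z_{1}^{\,p}-\delta_{j}z_{2}^{\,q}-t\,b_{j}\bigr)},
\]
adding if necessary a small $S^{1}$–equivariant mixed term so that $F_{t}$ becomes a generic map in the sense of the Introduction. Since $F_{t}\to h$ in the $C^{\infty}$–topology uniformly on $\overline U$ and $h$ is nonsingular on $\overline U\setminus\{\mathbf o\}$, for small $t$ all critical points of $F_{t}$ in $U$ lie in an arbitrarily small ball about $\mathbf o$; the radial weighted homogeneity then supplies a rescaling that reduces the analysis there to a model map of bounded size. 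By genericity $S_{1}(F_{t})$ is a $1$–manifold, a union of polar $S^{1}$–orbits, and $F_{t}$ has only folds and cusps.

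It remains to see that every singularity created in $U\setminus\{\mathbf o\}$ is an indefinite fold. Let $\zz_{0}\neq\mathbf o$ be a critical point of $F_{t}$; one checks $F_{t}(\zz_{0})\neq0$, and after a rotation of the target the corank is carried by $\operatorname{Im}F_{t}$, with $K:=\ker d(\operatorname{Re}F_{t})_{\zz_{0}}$ a $3$–plane. Then $\zz_{0}$ is a fold (resp.\ an indefinite fold) iff $(\operatorname{Hess}\operatorname{Im}F_{t})_{\zz_{0}}|_{K}$ is nondegenerate (resp.\ nondegenerate and indefinite). Expanding the product form to second order at $\zz_{0}$ and using the criticality relation — which forces $\partial\tilde g_{t}(\zz_{0})=\mu\,\partial\tilde f_{t}(\zz_{0})$ for a scalar $\mu$, $\tilde f_{t},\tilde g_{t}$ being the two holomorphic factors — one sees that $(\operatorname{Hess}\operatorname{Im}F_{t})_{\zz_{0}}$ is the sum of the imaginary parts of the holomorphic and the antiholomorphic second–order terms of $\tilde f_{t},\tilde g_{t}$ (each of balanced signature) and the rank–two semidefinite form $-\operatorname{Im}(\mu)\,|\partial\tilde f_{t}(\zeta)|^{2}$. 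Using $S^{1}$–equivariance to restrict attention to a $2$–plane transverse to the critical orbit through $\zz_{0}$, together with the rescaling above, the assertion is that the parameters $a_{i},b_{j}$ and the root data $\gamma_{i},\delta_{j}$ can be chosen so that along every critical orbit in $U\setminus\{\mathbf o\}$ the restriction of this form to $K$ is nondegenerate of signature $(2,1)$ or $(1,2)$ — hence no cusps and no definite folds; and if indefiniteness cannot be secured by the polynomial perturbation alone, one removes the residual cusps and definite folds by the $S^{1}$–equivariant analogues of the homotopies of \cite{L1}, \cite{S}, carried out within the family $\{F_{t}\}$. Since $\mathbf o$ is excluded from the statement, this proves the theorem.

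The crux is this last step: arranging that a \emph{polynomial} deformation creates only indefinite folds — equivalently, controlling the signature of the slice Hessian of the degenerate target component along all of the newly born critical orbits — while simultaneously eliminating the cusps; and doing so inside a restricted family of maps, so that Thom transversality for $C^{\infty}(X,Y)$ has to be replaced by an explicit jet–surjectivity (``moving'') lemma for the weighted homogeneous perturbations in use. A secondary technical point is to make sure the chosen deformation does not create worse singularities along the loci where distinct branches of $f$ or $g$ meet near $\mathbf o$.
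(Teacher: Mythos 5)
Your proposal sets the problem up correctly but leaves the two genuinely hard points unproved, and you flag this yourself (``the crux is this last step''). First, the absence of cusps: you reduce it to nondegeneracy of the slice Hessian along every critical orbit and then merely assert that the parameters $a_{i},b_{j}$ can be chosen to achieve it, offering as a fallback the elimination homotopies of Levine and Saeki. That fallback is not admissible here: those are $C^{\infty}$ results, they do not keep you inside the class of polynomial deformations $F_{t}(\zz)$ of $f(\zz)\overline{g}(\zz)$ that the statement requires, and elimination of definite folds is not unconditional. The paper instead takes $F_{t}=f\overline{g}+t(\gamma_{1}z_{1}^{p(m-n)}+\gamma_{2}z_{2}^{q(m-n)})$, which remains polar and radial weighted homogeneous, shows (Lemmas 2--3) that the fold condition follows from $\det H(F_{t})\neq 0$ on $S_{1}(F_{t})$ where $H$ is the mixed Hessian, and arranges this nonvanishing by a genericity-of-coefficients argument (Lemma 8): the bad loci $V_{1},V_{2}$ are invariant under the $S^{1}\times\Bbb{R}^{+}$-action, hence have finitely many branches depending only on $f$ and $g$, so $\gamma_{1},\gamma_{2}$ can be chosen to avoid them. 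This in turn needs the preliminary Lemmas 4--5 guaranteeing $\det H(f\overline{g}+th)\not\equiv 0$, which is exactly where the case split on $g$ linear versus nonlinear enters --- a point absent from your sketch.

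Second, and more seriously, indefiniteness. Your signature bookkeeping for the Hessian of $\Im F_{t}$ restricted to $K$ does not rule out signature $(3,0)$ or $(0,3)$, and you give no mechanism forcing indefiniteness. The paper's argument is of a different nature and uses polar weighted homogeneity essentially: on a fold circle $S$ (an $S^{1}$-orbit) the function $\lvert F_{t}\rvert^{2}$ is constant, so if $S$ were definite it would be a local maximum or minimum of $\lvert F_{t}\rvert^{2}$; but $\partial\lvert F_{t}\rvert^{2}/\partial z_{1}$ transforms by $\overline{s}^{\,q}$ under the $S^{1}$-action, so its real part changes sign as one moves along the orbit, producing a direction in which $\lvert F_{t}\rvert^{2}$ strictly decreases (resp.\ increases) --- a contradiction (Lemma 9). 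Nothing in your sketch substitutes for this. Note also that the branch-perturbation family you display, $\prod_{i}(z_{1}^{p}-\gamma_{i}z_{2}^{q}-ta_{i})$, is \emph{not} $S^{1}$-equivariant (the constants $ta_{i}$ do not scale under $(z_{1},z_{2})\mapsto(s^{q}z_{1},s^{p}z_{2})$), so the equivariance you invoke afterwards --- that $S_{1}(F_{t})$ is a union of polar orbits, and the reduction to a slice transverse to the orbit --- is not available for that family; you would need a deformation by polar weighted homogeneous terms of the correct polar degree, as in the paper.
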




As an application of Theorem $1$, we show that there exists 
a deformation into mixed broken Lefschetz fibrations. 

\begin{theorem}\label{thm2}
Let $F_{t}(\zz)$ be a~deformation of $f(\zz)\overline{g}(\zz)$ in Theorem $1$. 
Then there exists a~deformation $F_{t, s}(\zz)$ of $F_{t}(\zz)$ such that 
$F_{t, s}(\zz)$ is a~mixed broken Lefschetz fibration 
on $U$ where $0 < s <<~t~<<~1$.  
\end{theorem}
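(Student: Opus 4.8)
The plan is to reduce Theorem 2 to a purely local statement at $\mathbf o$ and then to resolve the singularity of $F_t$ there. By Theorem 1 every singularity of $F_t$ in $U\setminus\{\mathbf o\}$ is an indefinite fold, a stable singularity type; hence for every small $\delta>0$ the restriction $F_t|_{\overline U\setminus B_\delta(\mathbf o)}$ has only indefinite folds, and this is preserved by every sufficiently $C^\infty$-small perturbation. Since a deformation must be a polynomial map, I look for $F_{t,s}=F_t+s\,q$ with $q$ a fixed polynomial: for $s$ small (depending on $t$ and on $\delta$) the indefinite folds of $F_t$ outside $B_\delta(\mathbf o)$ survive, so it is enough to choose $q$ and $\delta$ so that inside $B_\delta(\mathbf o)$ the map $F_t+s\,q$ has only indefinite folds together with mixed Morse singularities. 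The nesting $0<s\ll t\ll1$ will record, from the outside in, that $t$ is small enough for $F_t$ to still govern the geometry near $\mathbf o$, that $\delta$ is small relative to $U$, and that $s$ is small enough for the resolved picture to sit inside $B_\delta(\mathbf o)$.

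Next I would resolve the germ of $F_t$ at $\mathbf o$. This is the only singularity of $F_t$ in $U$ that is not an indefinite fold, and one verifies it is a corank-two point whose local fiber, intersected with a small sphere $S^3_\delta$, is a fibered graph link; for $\delta$ and $t$ small this is the link of the weighted homogeneous mixed singularity $f\overline g$, so $F_t$ near $\mathbf o$ presents an open book on $S^3_\delta$ with page the Milnor fiber $M$ and quasi-periodic monodromy $\varphi$, filled by the cone. I would replace this cone filling inside $B_\delta(\mathbf o)$ by a mixed broken Lefschetz fibration over the $4$-ball inducing the same boundary open book $(M,\varphi)$: taking a factorization of $\varphi$ and realizing its right-handed Dehn twists by standard complex Morse (hence mixed Morse) pieces $z_1^2+z_2^2$, and the remaining twists and the handle attachments by round handles, that is, circles of indefinite fold singularities, in the manner of the existence proofs for broken Lefschetz fibrations in dimension four (in the special case of the Milnor open book of a complex singularity one may dispense with the folds altogether). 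Interpolating between $F_t$ and this model in a collar of $S^3_\delta$, eliminating the cusps and definite folds created in the process by the four-dimensional results cited in the introduction (\cite[Theorem 1, 2]{L1}, \cite[Theorem 2.6]{S}), and approximating the outcome by a polynomial map, which preserves all the singularity types since these are stable, produces the required $q$.

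The hard part is this local resolution. First, one must control the germ of $F_t$ at $\mathbf o$ precisely enough to read off its boundary open book; here the construction of $F_t$ in Theorem 1 together with the weighted homogeneity of $f\overline g$ do the work, since the Milnor monodromy of a weighted homogeneous singularity has finite order and a convenient factorization. Second, and more essential, the remark following the definition of a mixed Morse singularity shows that not every isolated singularity of a real polynomial map can be deformed to a mixed Morse one: the link of each exceptional point must be isotopic, \emph{as an oriented link}, to the positive Hopf link of $z_1^2+z_2^2$, so an achiral Lefschetz point will not do, and it is exactly the flexibility of the indefinite fold circles that lets one fill $(M,\varphi)$ over the $4$-ball while keeping the corank-two points that survive genuinely mixed Morse. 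Checking finally that $F_{t,s}$ is generic in the sense defined above — so that its singularities are folds — that these folds are indefinite, and that its rank-zero points are mixed Morse, is then a transversality computation against the submanifolds $S_1(X,Y)$, $S_2(X,Y)$ and $S_1^2(X,Y)$.
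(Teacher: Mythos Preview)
Your outline has a genuine gap at the final step, and it is fatal for this approach. You propose to build a smooth mixed broken Lefschetz fibration inside $B_\delta(\mathbf o)$ by topological surgery (monodromy factorization, round handles, cusp elimination via homotopy) and then to ``approximate the outcome by a polynomial map, which preserves all the singularity types since these are stable.'' But mixed Morse singularities are \emph{not} stable. A mixed Morse point lies in $S_2(F)$, and by the codimension count recalled in the introduction, $S_2(X,Y)$ has codimension $(2n-2+k)k=8$ in $J^1(X,Y)$ when $\dim X=4$; hence a generic map $\Bbb R^4\to\Bbb R^2$ has $S_2(f)=\emptyset$. Any polynomial approximation in general position will therefore erase all of your carefully placed mixed Morse points and replace them by fold circles (or nothing). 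The same objection kills the passage from ``a smooth map with the right singularities'' to ``$F_t+s\,q$ for a fixed polynomial $q$'': there is no parameter $s\to 0$ here, and the homotopies used in cusp elimination are not $C^\infty$-small.

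There is also a smaller factual slip: the germ of $F_t$ at $\mathbf o$ is governed by the lowest radial degree part, which is $t\,h(\zz)$, not $f\overline g$; Lemma~\ref{l10} shows that the link of $F_t$ at the origin is the $\bigl(p(m-n),q(m-n)\bigr)$-torus link of the holomorphic polynomial $h$. This could in principle be turned to your advantage (a holomorphic link admits a genuine Lefschetz filling), but it does not rescue the approximation step. The paper avoids all of this by writing down an \emph{explicit} linear perturbation $F_{t,s}=F_t+s(c_1z_1+c_2z_2)$, computing the mixed Hessian $H(F_{t,s})$ on $S_2(F_{t,s})$ directly, reducing it to a normal form to identify the local link as a Hopf link, and then invoking the enhancement to the Milnor number from \cite{NR1} to rule out negative Hopf links. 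The point is that the non-generic condition $S_2(F_{t,s})\neq\emptyset$ is forced by the specific algebraic shape of $F_{t,s}$, not obtained by perturbation.
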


This paper is organized as follows. In Section $2$ 
we introduce 
the definition of higher differentials of smooth maps, 
define mixed Hessian $H(P)$ of a mixed polynomial $P(\zz, \bar{\zz})$ 
and show properties of mixed Hessians 
to study singularities of mixed polynomials. 
In Section $3$ and $4$ we prove Theorem $1$ and Theorem $2$ respectively. 

The author would like to thank Masaharu Ishikawa, Masayuki Kawashima and 
Nguyen Tat Thang for precious comments. 

\section{Preliminaries}
\subsection{Higher differentials}
In this subsection, we assume that $X$ is an $n$-dimensional manifold and $Y$ is a $2$-dimensional manifold. 
Let $f : X \rightarrow Y$ be a smooth map and $df : T(X) \rightarrow T(Y)$ be the induced map of $f$, where 
$T(X)$ and $T(Y)$ are the tangent bundles of $X$ and $Y$ respectively. 
If $\Tilde{X}$ is a bundle over $X$ and $\varUpsilon : \Tilde{X} \rightarrow W$ is a map from $\Tilde{X}$ to a space $W$, 
we denote by $X_{x}$ and $\varUpsilon_{x} = \varUpsilon |_{X_{x}}$ the fiber over $x \in X$ and 
the restriction map of $\varUpsilon$ to $X_x$ respectively. 
We set the subset $S_{k}(f)$ of $X$ as 
\[ 
S_{k}(f) = \{ x \in X \mid \text{the rank of} \ df_{x} = 2-k \} \qquad (k = 0, 1, 2). 
\]
Note that $S_{0}(f)$ is the set of regular points of $f$ and 
$S(f) = S_{1}(f) \cup S_{2}(f)$ is the set of singularities of~$f$. 

The following notations are introduced in \cite[Section 2]{L1}. 
Let $U$ and $V$ be small neighborhoods of $x \in X$ and $f(x) \in Y$ such that $f(U) \subset V$. 
Since $T(X)|U$ and $T(Y)|V$ are trivial bundles, 
we can choose bases $\{ u_{i} \}$ and $\{ v_{j} \}$ of the sections of these restricted bundles 
such that 
\begin{equation*}
\begin{split}
\langle u_{i}(x), u^{*}_{k}(x)\rangle = \delta_{i,k} \ \ \ \text{for all} \  x \in U \\
\langle v_{i}(y), v^{*}_{k}(y)\rangle = \delta_{i,k} \ \ \ \text{for all} \  y \in V, 
\end{split}
\end{equation*}
where $\langle , \rangle$ denotes the pairing of a vector space with its dual, 
$\{ u_{i}^{*} \}$ and $\{ v_{j}^{*} \}$ are dual bases of 
$\{ u_{i} \}$ and $\{ v_{j} \}$ respectively.

Choose coordinates $\{ \xi_{i} \}$ in $U$ and $\{ \eta_{j} \}$ in $V$ 
such that $\partial/\partial \xi_{i} = u_{i}, d\xi_{i} = u_{i}^{*},  (\partial/\partial \eta_{j}) = v_{j}$ and 
$d\eta_{j} = v_{j}^{*}$. Then $df$ can be represented by 

\[
df = \sum_{i,j}\frac{\partial (\eta_{j} \circ f)}{\partial \xi_{i}} d\xi_{i} \otimes 
 v_{j}.
\]

Set $E = T(X)|S_{1}(f)$ and $F = T(Y)|f(S_{1}(f))$. Then we can define the following exact sequence 
\[
0 \rightarrow L \rightarrow E \stackrel{df}{\to} F \stackrel{\pi_{1}}{\to} G \rightarrow 0, 
\]
where $L = \ker{df}, G = \text{coker}\>df$ and $\pi$ is the linear map such that $\text{Im}\>\pi = \text{coker}\>{df}$. 

Let $k \in X_x, t \in L_x$ and $a_{i,j} = \partial (\eta_{j} \circ f)/\partial \xi_{i}$. 
We define the map $\varphi^{1} : E \rightarrow L^{*} \otimes F$ by 
\begin{equation*}
\begin{split}
\varphi_{x}^{1}(k, t) &= \sum_{i,j}\Bigl(\langle k, da_{i,j}(x)\rangle \langle t, u_{i}^{*}(x)\rangle \Bigr) v_{j}(x) \\
                   &= \sum_{i,j,m} \Bigl( \langle k, d\xi_{m}(x)\rangle \frac{\partial^{2} (\eta_{j} \circ f)}{\partial \xi_{i} \partial \xi_{m}}
                   \langle t, d\xi_{i}(x)\rangle \Bigr) v_{j}(x)
\end{split}
\end{equation*}
 and then define  the map $d^{2}f : E \rightarrow L^{*} \otimes G$ by 
\[
d^{2}f_{x}(k)(t) = \pi_{1}(\varphi_{x}^{1}(k)(t)). 
\]

By choosing bases of $L_{x}, X_{x}$ and $G_{x}$, the map $d^{2}f$ determines a $n \times (n-1)$ matrix $\phi$. 
$j^{1}f$ is transversal to $S_{1}(X, Y)$ at $S_{1}(f)$ if and only if 
the rank of $\phi$ is equal to $n-1$. 
Moreover the singularity $x \in S_{1}(f)$ is a fold singularity if and only if 
the rank of $\phi$ is equal to $n-1$ and  
the dimension of the kernel of $d^{2}f_{x}$ restricted to $L_{x}$ 
is equal to $0$ \cite{L1}.

\subsection{Polar weighted homogeneous mixed polynomials}
Let $P(\zz, \bar{\zz})$ be a polynomial  
of variables $\zz = (z_1, \dots, z_n)$ and $\bar{\zz} = (\bar{z}_1, \dots, \bar{z}_n)$ given as 
\[
P(\zz, \bar{\zz}) := \sum_{\nu, \mu} c_{\nu, \mu}\zz^{\nu}\bar{\zz}^{\mu}, 
\]
where $\zz^{\nu} = z^{\nu_1}_1 \cdots z^{\nu_n}_n$ for $\nu = (\nu_1, \dots, \nu_n)$ \ 
(respectively $\bar{\zz}^{\mu} = \bar{z}_{1}^{\mu_1} \cdots \bar{z}_{n}^{\mu_n}$ for $\mu = (\mu_1, \dots, \mu_n))$. 
$\bar{z}_j$~represents the complex conjugate of $z_j$. A polynomial $P(\zz, \bar{\zz})$ 
of this form is called a \textit{mixed polynomial} \cite{O1, O2}. 
If $P\bigl((0,\dots,0, z_{j},0,\dots,0), (0,\dots,0, \bar{z}_{j},0,\dots,0)\bigr)$ is non-zero for each $j= 1,\dots,n$, 
then we say that $P(\zz, \bar{\zz})$ is \textit{convenient}. 
A point $\ww \in \Bbb{C}^{n}$ is \textit{a singularity of $P(\zz, \bar{\zz})$} 
if the gradient vectors of $\Re P$ and $\Im P$ are linearly dependent at~$\ww$. 
A singularity $\ww$ of $P(\zz, \bar{\zz})$ has the following property.  

\begin{proposition}[\cite{O1} Proposition $1$]
The following conditions are equivalent: 
\begin{enumerate}
\item
$\ww$ is a singularity of $P(\zz, \bar{\zz})$. 
\item
There exists a complex number $\alpha$ with $\lvert \alpha \rvert = 1$ such that 
\[
\Bigr(\overline{\frac{\partial P}{\partial z_{1}}}(\ww), \dots, \overline{\frac{\partial P}{\partial z_{n}}}(\ww)\Bigl) 
= \alpha\Bigr(\frac{\partial P}{\partial \bar{z}_{1}}(\ww), \dots, \frac{\partial P}{\partial \bar{z}_{n}}(\ww)\Bigl). 
\]
\end{enumerate}
\end{proposition}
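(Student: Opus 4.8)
The plan is to rephrase both conditions through a single unit-modulus complex parameter and match them by a short Wirtinger-calculus computation. Write $P = p_{1} + ip_{2}$ with $p_{1} = \Re P$, $p_{2} = \Im P$ real-valued functions on $\Bbb{R}^{2n}$ with coordinates $(x_{1}, y_{1}, \dots, x_{n}, y_{n})$ and $z_{j} = x_{j} + iy_{j}$. By definition $\ww$ is a singularity of $P$ precisely when there is $(a, b) \in \Bbb{R}^{2} \setminus \{\mathbf o\}$ with $a\, dp_{1}(\ww) + b\, dp_{2}(\ww) = 0$. Setting $\beta = a + ib \neq 0$, one has $a\, dp_{1} + b\, dp_{2} = \Re\bigl(\bar{\beta}\,(dp_{1} + i\, dp_{2})\bigr) = \Re\bigl(\bar{\beta}\, dP\bigr)$, and after rescaling we may take $\lvert\beta\rvert = 1$. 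Thus condition (1) is equivalent to: there exists $\beta$ with $\lvert\beta\rvert = 1$ such that the real $1$-form $\Re\bigl(\bar{\beta}\, dP_{\ww}\bigr)$ vanishes.

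Next I would expand $dP$ in the basis $\{dz_{j}, d\bar{z}_{j}\}$: from $dP = \sum_{j}\frac{\partial P}{\partial z_{j}}\, dz_{j} + \sum_{j}\frac{\partial P}{\partial \bar{z}_{j}}\, d\bar{z}_{j}$ and $\overline{dP} = \sum_{j}\overline{\frac{\partial P}{\partial \bar{z}_{j}}}\, dz_{j} + \sum_{j}\overline{\frac{\partial P}{\partial z_{j}}}\, d\bar{z}_{j}$, the identity $2\Re(\bar\beta\, dP) = \bar\beta\, dP + \beta\, \overline{dP}$ yields
\[
2\Re(\bar\beta\, dP) = \sum_{j}\Bigl(\bar\beta\,\frac{\partial P}{\partial z_{j}} + \beta\,\overline{\frac{\partial P}{\partial \bar z_{j}}}\Bigr) dz_{j} + \sum_{j}\Bigl(\bar\beta\,\frac{\partial P}{\partial \bar z_{j}} + \beta\,\overline{\frac{\partial P}{\partial z_{j}}}\Bigr) d\bar z_{j}.
\]
Since $dz_{1}, \dots, dz_{n}, d\bar z_{1}, \dots, d\bar z_{n}$ are linearly independent over $\Bbb C$ (they span the complexified cotangent space, as $dz_{j} = dx_{j} + i\, dy_{j}$, $d\bar z_{j} = dx_{j} - i\, dy_{j}$), the vanishing of $\Re(\bar\beta\, dP_{\ww})$ amounts to the vanishing at $\ww$ of every coefficient; and because the $d\bar z_{j}$-coefficient is the complex conjugate of the $dz_{j}$-coefficient, this reduces to the $n$ equations $\bar\beta\,\frac{\partial P}{\partial z_{j}}(\ww) + \beta\,\overline{\frac{\partial P}{\partial \bar z_{j}}}(\ww) = 0$, $j = 1, \dots, n$.

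To finish, taking conjugates and dividing by $\beta$ rewrites these as $\overline{\frac{\partial P}{\partial z_{j}}}(\ww) = -(\bar\beta/\beta)\,\frac{\partial P}{\partial \bar z_{j}}(\ww)$ for all $j$, so with $\alpha = -\bar\beta/\beta$ (hence $\lvert\alpha\rvert = 1$) we obtain condition (2). Conversely, given $\alpha$ with $\lvert\alpha\rvert = 1$ satisfying (2), the equation $-\bar\beta/\beta = \alpha$ has a solution with $\lvert\beta\rvert = 1$ (take $\beta = e^{i\varphi}$, so $-\bar\beta/\beta = -e^{-2i\varphi}$ sweeps the whole unit circle), and reversing the computation gives $(a, b) = (\Re\beta, \Im\beta) \neq \mathbf o$ witnessing (1). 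I do not anticipate a real obstacle; the only points requiring care are justifying that comparison of $dz_{j}$- and $d\bar z_{j}$-coefficients is legitimate and checking the conjugate symmetry that collapses the $2n$ complex equations to $n$, after which the argument is elementary algebra with unit-modulus complex numbers.
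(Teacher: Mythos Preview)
Your argument is correct. The paper does not give its own proof of this proposition; it is stated with a citation to \cite{O1} and used as a black box throughout. So there is nothing to compare against here, and your Wirtinger-calculus derivation stands on its own: the reduction of ``$\nabla\Re P$ and $\nabla\Im P$ are linearly dependent over $\Bbb R$'' to ``$\Re(\bar\beta\,dP_{\ww})=0$ for some $\lvert\beta\rvert=1$'', followed by reading off the $dz_{j}$-coefficients and using the conjugate symmetry to collapse $2n$ equations to $n$, is exactly the standard route and goes through without issue. The only cosmetic point is that your justification ``$dz_{j},d\bar z_{j}$ are linearly independent over $\Bbb C$'' is really the statement that a real $1$-form vanishes iff its complexification does, which in turn vanishes iff its $dz_{j}$-coefficients do; you have this in mind already, so no change is needed.
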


Let $p_{1}, \dots, p_{n}$ and $q_{1}, \dots, q_{n}$ be integers such that $\gcd(p_1, \dots, p_n) = 1$. 
We define the~$S^1$-action and the~$\Bbb{R}^{*}$-action on $\Bbb{C}^{n}$ as follows: 
\begin{equation*}
\begin{split}
s \circ \zz &= (s^{p_{1}}z_{1}, \dots, s^{p_{n}}z_{n}), \ \ s \in S^{1}. \\
r \circ \zz &= (r^{q_1}z_{1}, \dots, r^{q_n}z_{n}), \ \ r \in \Bbb{R}^{*}. 
\end{split}
\end{equation*}
If there exist  positive integers $d_p$ and $d_{r}$ such that the mixed polynomial $P(\zz, \bar{\zz})$ satisfies 
\begin{equation}
\begin{split}
P(s^{p_{1}}z_{1}, \dots, s^{p_{n}}z_{n}, \bar{s}^{p_1}\bar{z_1}, \dots, \bar{s}^{p_1}\bar{z_n})
 &= s^{d_p}P(\zz, \bar{\zz}), \ \ s \in S^{1},  \notag \\
P(r^{q_1}z_{1}, \dots, r^{q_n}z_{n}, r^{q_1}\bar{z_1}, \dots, r^{q_n}\bar{z_n}) 
&= r^{d_{r}}P(\zz, \bar{\zz}), \ \ r \in \Bbb{R}^{*}, 
\end{split}
\end{equation}
we say that $P(\zz, \bar{\zz})$ is \textit{a polar and radial weighted homogeneous mixed polynomial}. 
If a~polar and radial weighted homogeneous mixed polynomial $P(\zz, \bar{\zz})$ is a complex polynomial, 
we call $P(\zz, \bar{\zz})$ \textit{a weighted homogeneous complex polynomial}. 
Polar and radial weighted homogeneous mixed polynomials  
admit 
Milnor fibrations,
see for instance \cite{RSV, C, O1, O2}. 
Suppose that $P(\zz, \bar{\zz})$ is a polar and radial weighted homogeneous mixed polynomial. 
Then we have 
\begin{equation*}
\begin{split}
d_{p}P(\zz, \bar{\zz}) &= \textstyle\sum_{j = 1}^{n} p_{j}\Bigl(\frac{\partial P}{\partial z_{j}}z_{j} - \frac{\partial P}{\partial \overline{z}_{j}}\overline{z}_{j} \Bigr), \\
d_{r}P(\zz, \bar{\zz}) &= \textstyle\sum_{j = 1}^{n} q_{j}\Bigl(\frac{\partial P}{\partial z_{j}}z_{j} + \frac{\partial P}{\partial \overline{z}_{j}}\overline{z}_{j} \Bigr). \\
\end{split}
\end{equation*}
If $p_{j} = q_{j}$ for $j = 1, \dots, n$, the above equations give: 
\begin{equation}
\textstyle\sum_{j = 1}^{n}p_{j}\frac{\partial P}{\partial z_{j}}z_{j} = \frac{d_{p} + d_{r}}{2}P(\zz, \bar{\zz}). 
\end{equation}

The following claim says that the singularities of $P(\zz, \bar{\zz})$ are 
orbits of the $S^{1}$-action. 
\begin{proposition}
Let $P(\zz, \bar{\zz})$ is a polar weighted homogeneous mixed polynomial. 
If $\ww$ is a~singularity of $P(\zz, \bar{\zz})$, $s \circ \ww$ is also a singularity of $P(\zz, \bar{\zz})$, 
where $s \in S^{1}$. 
\end{proposition}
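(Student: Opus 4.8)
The plan is to apply the criterion recalled in Proposition~1: a point is a singularity of $P$ exactly when its two Wirtinger gradients are proportional through a complex factor of modulus~$1$. Given that $\ww$ is a singularity, it therefore suffices, for each $s \in S^{1}$, to exhibit a unit $\beta$ with $\overline{(\partial P/\partial z_{j})}(s\circ\ww) = \beta\,(\partial P/\partial \bar z_{j})(s\circ\ww)$ for every $j$. Only the polar part of the weighted homogeneity of $P$ enters.

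First I would compute how the first-order Wirtinger derivatives transform under the $S^{1}$-action. Differentiating the polar identity
\[
P(s^{p_{1}}z_{1}, \dots, s^{p_{n}}z_{n}, \bar s^{p_{1}}\bar z_{1}, \dots, \bar s^{p_{n}}\bar z_{n}) = s^{d_{p}}P(\zz, \bar\zz)
\]
with respect to $z_{j}$ and to $\bar z_{j}$, and using that only the $j$-th holomorphic (resp. antiholomorphic) slot depends on $z_{j}$ (resp. $\bar z_{j}$), the chain rule gives
\[
\frac{\partial P}{\partial z_{j}}(s\circ\zz, \overline{s\circ\zz}) = s^{\,d_{p}-p_{j}}\frac{\partial P}{\partial z_{j}}(\zz, \bar\zz), \qquad
\frac{\partial P}{\partial \bar z_{j}}(s\circ\zz, \overline{s\circ\zz}) = s^{\,d_{p}+p_{j}}\frac{\partial P}{\partial \bar z_{j}}(\zz, \bar\zz),
\]
where in the second identity I replaced $\bar s^{\,-p_{j}}$ by $s^{\,p_{j}}$ using $|s| = 1$.

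Next I would put $\zz = \ww$ and combine the two transformation rules with the singularity relation
\[
\overline{\frac{\partial P}{\partial z_{j}}}(\ww) = \alpha\,\frac{\partial P}{\partial \bar z_{j}}(\ww)
\]
of Proposition~1 (for a suitable unit $\alpha$). Conjugating the first rule and using this relation,
\[
\overline{\frac{\partial P}{\partial z_{j}}}(s\circ\ww) = \bar s^{\,d_{p}-p_{j}}\,\overline{\frac{\partial P}{\partial z_{j}}}(\ww) = \bar s^{\,d_{p}-p_{j}}\,\alpha\,\frac{\partial P}{\partial \bar z_{j}}(\ww),
\]
and since the second rule gives $(\partial P/\partial \bar z_{j})(\ww) = \bar s^{\,d_{p}+p_{j}}(\partial P/\partial \bar z_{j})(s\circ\ww)$ (obtained by multiplying through by the reciprocal unit $\bar s^{\,d_{p}+p_{j}}$, so that the case of a vanishing derivative is covered with no division), we conclude
\[
\overline{\frac{\partial P}{\partial z_{j}}}(s\circ\ww) = \alpha\,\bar s^{\,2d_{p}}\,\frac{\partial P}{\partial \bar z_{j}}(s\circ\ww).
\]
Hence $\beta := \alpha\,\bar s^{\,2d_{p}}$ has modulus $1$, does not depend on $j$, and exhibits $s\circ\ww$ as a singularity of $P$ by Proposition~1.

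The computation is routine Wirtinger bookkeeping; the only delicate point is that, once $\bar s = s^{-1}$ is used, the $j$-dependent exponents $d_{p}-p_{j}$ and $d_{p}+p_{j}$ collapse to the single $j$-independent exponent $2d_{p}$. That uniformity in $j$ is precisely what lets one factor $\beta$ serve all coordinates simultaneously, and it is the only place where the argument could conceivably break down — which it does not; the statement holds for every polar weighted homogeneous mixed polynomial.
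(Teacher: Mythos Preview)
Your argument is correct and follows essentially the same route as the paper: derive the transformation laws $\frac{\partial P}{\partial z_{j}}(s\circ\ww)=s^{d_{p}-p_{j}}\frac{\partial P}{\partial z_{j}}(\ww)$ and $\frac{\partial P}{\partial \bar z_{j}}(s\circ\ww)=s^{d_{p}+p_{j}}\frac{\partial P}{\partial \bar z_{j}}(\ww)$, then combine them with the Proposition~1 relation at $\ww$ to obtain the same relation at $s\circ\ww$ with unit factor $\alpha\,\bar s^{\,2d_{p}}=s^{-2d_{p}}\alpha$. Your explicit remark that the $p_{j}$-dependence cancels, leaving a $j$-independent unit, makes the key point a bit more transparent than the paper does, but the proof is otherwise the same.
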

\begin{proof}
Let $\ww$ be a singularity of a polar weighted homogeneous mixed polynomial $P(\zz, \bar{\zz})$. 
Then there exists $\alpha \in S^{1}$ such that 
\[
\Bigr(\overline{\frac{\partial P}{\partial z_{1}}}(\ww), \dots, \overline{\frac{\partial P}{\partial z_{n}}}(\ww)\Bigl) 
= \alpha\Bigr(\frac{\partial P}{\partial \bar{z}_{1}}(\ww), \dots, \frac{\partial P}{\partial \bar{z}_{n}}(\ww)\Bigl). 
\]
Since $P(\zz, \bar{\zz})$ is a polar weighted homogeneous mixed polynomial, 
$\frac{\partial P}{\partial z_{j}}$ and $\frac{\partial P}{\partial \bar{z}_{j}}$ 
are also. Then we have 
\[
\frac{\partial P}{\partial z_{j}}(s \circ \ww) = s^{d_{p} - p_{j}}\frac{\partial P}{\partial z_{j}}(\ww), \ \ 
\frac{\partial P}{\partial \bar{z}_{j}}(s \circ \ww) = s^{d_{p} + p_{j}}\frac{\partial P}{\partial \bar{z}_{j}}(\ww), 
\]
where $j = 1, \dots, n$ and $s \in S^{1}$. 
So the above equations lead to the following equation: 
\[
\Bigr(\overline{\frac{\partial P}{\partial z_{1}}}(s \circ \ww), \dots, \overline{\frac{\partial P}{\partial z_{n}}}(s \circ \ww)\Bigl) 
= (s^{-2d_{p}}\alpha)\Bigr(\frac{\partial P}{\partial \bar{z}_{1}}(s \circ \ww), \dots, \frac{\partial P}{\partial \bar{z}_{n}}(s \circ \ww)\Bigl). 
\]
Since $\lvert s^{-2d_{p}}\alpha \rvert = 1$, by Proposition $1$, $s \circ \ww$ is also a singularity of $P(\zz, \bar{\zz})$. 
\end{proof}

\subsection{Mixed Hessians}
To study a necessary condition for $P(\zz, \bar{\zz})$ so that the~rank of the~representation matrix of $d^{2}P$ 
is equal to $n-1$, 
we define the matrix $H(P)$ as follows: 
\[ H(P) :=    \left(
        \begin{array}{@{\,}cccccccc@{\,}}
        \Bigl( \frac{\partial^{2}P}{\partial z_{j}\partial z_{k}} \Bigr) & \Bigl( \frac{\partial^{2}P}{\partial z_{j}\partial \bar{z}_{k}} \Bigr) \\ 
        \Bigl( \frac{\partial^{2}P}{\partial \bar{z}_{j}\partial z_{k}} \Bigr) & \Bigl( \frac{\partial^{2}P}{\partial \bar{z}_{j}\partial \bar{z}_{k}} \Bigr)  
        \end{array}
       \right),  \]
where $P(\zz, \bar{\zz})$ is a mixed polynomial. 
We call the matrix $H(P)$ the \textit{mixed Hessian} of $P(\zz, \bar{\zz})$ and show 
some properties of $H(P)$ to study singularities of $P(\zz, \bar{\zz})$. 

The next lemma is useful to understand the mixed Hessian of $P(\zz, \bar{\zz})$. 
\begin{lemma}\label{l51}
Let $A$ and $B$ be $n \times n$ real matrices such that $\det(A + iB) \neq 0$. 
Then there exists a real number $u_0$ such that $\det(A + u_{0}B) \neq 0$. 
\end{lemma}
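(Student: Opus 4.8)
The plan is to reduce the statement to a fact about a single real polynomial in one variable having only finitely many roots. First I would consider the function $u \mapsto \det(A + uB)$, which is a polynomial in the real variable $u$ of degree at most $n$. The key observation is that this polynomial is not identically zero: if it were, then $\det(A + uB) = 0$ for every real $u$, and in particular one could hope to derive a contradiction with the hypothesis $\det(A + iB) \neq 0$. The cleanest way to see this is to note that $\det(A + zB)$, as a function of a complex variable $z$, is a polynomial with real coefficients (since $A$ and $B$ are real matrices); call it $p(z)$. If $p$ vanished on all of $\mathbb{R}$, then, being a polynomial, it would be the zero polynomial, hence $p(i) = \det(A + iB) = 0$, contradicting the assumption.

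Having established that $p(u) = \det(A + uB)$ is a nonzero real polynomial, I would invoke the elementary fact that a nonzero polynomial has only finitely many roots. Therefore there exists some $u_0 \in \mathbb{R}$ (in fact all but finitely many real numbers work) with $p(u_0) = \det(A + u_0 B) \neq 0$, which is exactly the claim.

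The one place where a little care is needed — and the only candidate for an ``obstacle'' — is justifying that $\det(A + zB)$ really is a polynomial in $z$ with real coefficients, so that vanishing on $\mathbb{R}$ forces it to be the zero polynomial (which then gives the contradiction at $z = i$). This is immediate from the Leibniz expansion of the determinant: each entry of $A + zB$ is an affine function $a_{jk} + z b_{jk}$ with real $a_{jk}, b_{jk}$, and the determinant is a polynomial combination of these entries, hence a polynomial in $z$ with real coefficients of degree at most $n$. Once this is in hand, the argument is entirely routine: nonzero real polynomials are nonzero as functions on $\mathbb{R}$, and in fact have at most $n$ real zeros, so almost every real $u_0$ does the job. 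I do not anticipate any genuine difficulty here; the lemma is a standard perturbation trick used to pass from complex nondegeneracy to real nondegeneracy.
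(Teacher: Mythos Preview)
Your argument is correct and is essentially the same as the paper's: both view $\det(A+zB)$ as a polynomial in $z$, observe it is not identically zero because its value at $z=i$ is nonzero, and conclude that it has only finitely many (complex, hence real) roots. The paper phrases this slightly more directly (it simply notes $p(i)\neq 0$ rather than arguing by contradiction through vanishing on $\mathbb{R}$, and it separates the trivial case $B=0$), but there is no substantive difference.
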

\begin{proof}
Let $u$ be a complex variable. 
If $B$ is the zero matrix, then $\det(A + uB) = \det(A + iB) \neq 0$. 
Suppose that $B$ is not the zero matrix. 
By the assumption, 
$\det(A + uB)$ is not identically zero. 
Since $\det(A + uB)$ is a polynomial of degree at most $n$, the equation $\det(A + uB) = 0$ has 
finitely many roots. 
Thus there exists a real number $u_0$ which is not a root of $\det(A + uB) = 0$. 
\qedhere
\end{proof}

Let $H_{\Bbb{R}}(\eta)$ denote the Hessian of a smooth 
function $\eta : \Bbb{R}^{n} \rightarrow \Bbb{R}$.

\begin{lemma}
Suppose that the rank of $\text{H}(P)$ is $2n$. 
By changing the coordinates of $\Bbb{R}^{2}$ if necessary, 
the rank of $H_{\Bbb{R}}(\Im P)$ is $2n$. 
By the same argument, we can also say that by changing the coordinates of $\Bbb{R}^{2}$ if necessary, 
the rank of $H_{\Bbb{R}}(\Re P)$ is $2n$. 
\end{lemma}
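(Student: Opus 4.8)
The plan is to convert the hypothesis on $H(P)$ into a statement about the genuine real Hessians of $\Re P$ and $\Im P$ and then invoke Lemma \ref{l51}. Writing $z_j = x_j + iy_j$, I would first record that the linear passage from the real coordinates $(x_1,\dots,x_n,y_1,\dots,y_n)$ to $(z_1,\dots,z_n,\bar z_1,\dots,\bar z_n)$ is given by the matrix $M = \left(\begin{smallmatrix} I_n & iI_n \\ I_n & -iI_n \end{smallmatrix}\right)$, which is invertible since $\det M = (-2i)^n \neq 0$. Because the entries of $H(P)$ are precisely the second Wirtinger derivatives of $P$, the chain rule for Wirtinger derivatives yields the identity
\[
H_{\Bbb{R}}(\Re P) + i\,H_{\Bbb{R}}(\Im P) = M^{T}\,H(P)\,M .
\]
Invertibility of $M$ then gives $\text{rank}\,H(P) = \text{rank}\bigl(H_{\Bbb{R}}(\Re P) + iH_{\Bbb{R}}(\Im P)\bigr)$, so the hypothesis $\text{rank}\,H(P) = 2n$ is equivalent to $\det\bigl(H_{\Bbb{R}}(\Re P) + iH_{\Bbb{R}}(\Im P)\bigr)\neq 0$ at the point in question.

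Next I would apply Lemma \ref{l51} with $A = H_{\Bbb{R}}(\Re P)$ and $B = H_{\Bbb{R}}(\Im P)$, two real symmetric $2n\times 2n$ matrices: this produces a real number $u_0$ with $\det\bigl(H_{\Bbb{R}}(\Re P) + u_0 H_{\Bbb{R}}(\Im P)\bigr)\neq 0$, that is, $\det H_{\Bbb{R}}(\Re P + u_0\Im P)\neq 0$. The key observation is then that $\Re P + u_0\Im P$ is exactly the imaginary part of $P$ after the linear change of coordinates of $\Bbb{R}^2$ with matrix $\left(\begin{smallmatrix} 0 & 1 \\ 1 & u_0 \end{smallmatrix}\right)$ (of determinant $-1$), so in the new target coordinates $H_{\Bbb{R}}(\Im P)$ has rank $2n$. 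For the assertion about $\Re P$, since both matrices are real one has $\det\bigl(H_{\Bbb{R}}(\Im P) + iH_{\Bbb{R}}(\Re P)\bigr) = i^{2n}\,\overline{\det\bigl(H_{\Bbb{R}}(\Re P) + iH_{\Bbb{R}}(\Im P)\bigr)}\neq 0$, so Lemma \ref{l51} with the roles of the two parts exchanged gives $u_1\in\Bbb{R}$ with $\det H_{\Bbb{R}}(\Im P + u_1\Re P)\neq 0$, and the change of target coordinates with matrix $\left(\begin{smallmatrix} u_1 & 1 \\ 1 & 0 \end{smallmatrix}\right)$ makes $H_{\Bbb{R}}(\Re P)$ have rank $2n$.

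I expect the one delicate point to be justifying the identity $H_{\Bbb{R}}(\Re P) + iH_{\Bbb{R}}(\Im P) = M^{T}H(P)M$: one must check that the formal second Wirtinger derivatives assembled into $H(P)$ transform under the (non-real) change of variables $M$ exactly like the entries of an honest Hessian, even though $z_j$ and $\bar z_j$ are not independent real variables. This is standard Wirtinger calculus but deserves a careful line. Everything after it --- the rank equivalence, the two applications of Lemma \ref{l51}, and the recognition that a real-linear combination $a\Re P + b\Im P$ is the real or imaginary part of $P$ in suitable target coordinates --- is routine. The phrase ``changing the coordinates of $\Bbb{R}^2$ if necessary'' is exactly what the argument needs, because $H_{\Bbb{R}}(\Im P)$ by itself may well be singular even when $\det\bigl(H_{\Bbb{R}}(\Re P) + iH_{\Bbb{R}}(\Im P)\bigr)\neq 0$.
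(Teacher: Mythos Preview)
Your proposal is correct and follows essentially the same approach as the paper: both establish that $H(P)$ and $H_{\Bbb{R}}(\Re P)+iH_{\Bbb{R}}(\Im P)$ have the same rank (the paper by an explicit entry-by-entry computation, you via the cleaner congruence $M^{T}H(P)M$), then invoke Lemma~\ref{l51} to find a real $u_0$, and finally interpret $\Re P + u_0\Im P$ as the imaginary part of $P$ after a linear change of target coordinates. Your treatment of the $\Re P$ case is slightly more explicit than the paper's ``by the same argument,'' but the logic is identical.
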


\begin{proof}
Recall that 
\[
\frac{\partial}{\partial z_{j}} = \frac{1}{2}\Bigl( \frac{\partial}{\partial x_{j}} -i\frac{\partial}{\partial y_{j}} \Bigr), \ \ 
\frac{\partial}{\partial \bar{z}_{j}} = \frac{1}{2}\Bigl( \frac{\partial}{\partial x_{j}} +i\frac{\partial}{\partial y_{j}} \Bigr). 
\]
The second differentials of complex variables can be represented as follows: 
\begin{equation*}
\begin{split}
\frac{\partial^{2}}{\partial z_{j}\partial z_{k}} &= 
\frac{1}{4}\Bigl( \frac{\partial^{2}}{\partial x_{j}\partial x_{k}} - \frac{\partial^{2}}{\partial y_{j}\partial y_{k}} \Bigr) 
-\frac{i}{4}\Bigl( \frac{\partial^{2}}{\partial y_{j}\partial x_{k}} + \frac{\partial^{2}}{\partial x_{j}\partial y_{k}} \Bigr) \\
\frac{\partial^{2}}{\partial z_{j}\partial \bar{z}_{k}} &= 
\frac{1}{4}\Bigl( \frac{\partial^{2}}{\partial x_{j}\partial x_{k}} + \frac{\partial^{2}}{\partial y_{j}\partial y_{k}} \Bigr) 
-\frac{i}{4}\Bigl( \frac{\partial^{2}}{\partial y_{j}\partial x_{k}} - \frac{\partial^{2}}{\partial x_{j}\partial y_{k}} \Bigr) \\
\frac{\partial^{2}}{\partial \bar{z}_{j}\partial z_{k}} &= 
\frac{1}{4}\Bigl( \frac{\partial^{2}}{\partial x_{j}\partial x_{k}} + \frac{\partial^{2}}{\partial y_{j}\partial y_{k}} \Bigr) 
+\frac{i}{4}\Bigl( \frac{\partial^{2}}{\partial y_{j}\partial x_{k}} - \frac{\partial^{2}}{\partial x_{j}\partial y_{k}} \Bigr) \\
\frac{\partial^{2}}{\partial \bar{z}_{j}\partial \bar{z}_{k}} &= 
\frac{1}{4}\Bigl( \frac{\partial^{2}}{\partial x_{j}\partial x_{k}} - \frac{\partial^{2}}{\partial y_{j}\partial y_{k}} \Bigr) 
+\frac{i}{4}\Bigl( \frac{\partial^{2}}{\partial y_{j}\partial x_{k}} + \frac{\partial^{2}}{\partial x_{j}\partial y_{k}} \Bigr). 
\end{split}
\end{equation*}
So the second differentials of a mixed polynomial $P(\zz, \bar{\zz})$ satisfy the following equations:  
\begin{equation*}
\begin{split}
\frac{\partial^{2}P}{\partial z_{j}\partial z_{k}} = 
 &\frac{1}{4}
\Bigl(\frac{\partial^{2}\Re P}{\partial x_{j}\partial x_{k}} + \frac{\partial^{2}\Im P}{\partial x_{j}\partial y_{k}} +
\frac{\partial^{2}\Im P}{\partial y_{j}\partial x_{k}} - \frac{\partial^{2}\Re P}{\partial y_{j}\partial y_{k}}\Bigr) \\
+ &\frac{i}{4}\Bigl(-\frac{\partial^{2}\Re P}{\partial y_{j}\partial x_{k}} - \frac{\partial^{2}\Im P}{\partial y_{j}\partial y_{k}} +
\frac{\partial^{2}\Im P}{\partial x_{j}\partial x_{k}} - \frac{\partial^{2}\Re P}{\partial x_{j}\partial y_{k}}\Bigr) \\
\frac{\partial^{2}P}{\partial z_{j}\partial \bar{z}_{k}} = 
 &\frac{1}{4}
\Bigl(\frac{\partial^{2}\Re P}{\partial x_{j}\partial x_{k}} - \frac{\partial^{2}\Im P}{\partial x_{j}\partial y_{k}} +
\frac{\partial^{2}\Im P}{\partial y_{j}\partial x_{k}} + \frac{\partial^{2}\Re P}{\partial y_{j}\partial y_{k}}\Bigr) \\
+ &\frac{i}{4}\Bigl(-\frac{\partial^{2}\Re P}{\partial y_{j}\partial x_{k}} + \frac{\partial^{2}\Im P}{\partial y_{j}\partial y_{k}} +
\frac{\partial^{2}\Im P}{\partial x_{j}\partial x_{k}} + \frac{\partial^{2}\Re P}{\partial x_{j}\partial y_{k}}\Bigr) \\
\frac{\partial^{2}P}{\partial \bar{z}_{j}\partial z_{k}} = 
 &\frac{1}{4}
\Bigl(\frac{\partial^{2}\Re P}{\partial x_{j}\partial x_{k}} + \frac{\partial^{2}\Im P}{\partial x_{j}\partial y_{k}} -
\frac{\partial^{2}\Im P}{\partial y_{j}\partial x_{k}} + \frac{\partial^{2}\Re P}{\partial y_{j}\partial y_{k}}\Bigr) \\
+ &\frac{i}{4}\Bigl(\frac{\partial^{2}\Re P}{\partial y_{j}\partial x_{k}} + \frac{\partial^{2}\Im P}{\partial y_{j}\partial y_{k}} +
\frac{\partial^{2}\Im P}{\partial x_{j}\partial x_{k}} - \frac{\partial^{2}\Re P}{\partial x_{j}\partial y_{k}}\Bigr) \\
\frac{\partial^{2}P}{\partial \bar{z}_{j}\partial \bar{z}_{k}} = 
 &\frac{1}{4}
\Bigl(\frac{\partial^{2}\Re P}{\partial x_{j}\partial x_{k}} - \frac{\partial^{2}\Im P}{\partial x_{j}\partial y_{k}} -
\frac{\partial^{2}\Im P}{\partial y_{j}\partial x_{k}} - \frac{\partial^{2}\Re P}{\partial y_{j}\partial y_{k}}\Bigr) \\
+ &\frac{i}{4}\Bigl(\frac{\partial^{2}\Re f}{\partial y_{j}\partial x_{k}} - \frac{\partial^{2}\Im P}{\partial y_{j}\partial y_{k}} +
\frac{\partial^{2}\Im P}{\partial x_{j}\partial x_{k}} + \frac{\partial^{2}\Re P}{\partial x_{j}\partial y_{k}}\Bigr). \\
\end{split}
\end{equation*}

The above equations show that the matrix $H_{\Bbb{R}}(\Re P) + iH_{\Bbb{R}}(\Im P)$ has the form: 
\begin{multline*}
 H_{\Bbb{R}}(\Re P) + iH_{\Bbb{R}}(\Im P) =  \\
    \left(
        \begin{array}{@{\,}cccccccc@{\,}}
        \Bigl( \frac{\partial^{2}P}{\partial z_{j}\partial z_{k}} + \frac{\partial^{2}P}{\partial \bar{z}_{j}\partial z_{k}} + \frac{\partial^{2}P}{\partial z_{j}\partial \bar{z}_{k}} + \frac{\partial^{2}P}{\partial \bar{z}_{j}\partial \bar{z}_{k}}\Bigr) 
        & i\Bigl( \frac{\partial^{2}P}{\partial z_{j}\partial z_{k}} + \frac{\partial^{2}P}{\partial \bar{z}_{j}\partial z_{k}} - \frac{\partial^{2}P}{\partial z_{j}\partial \bar{z}_{k}} - \frac{\partial^{2}P}{\partial \bar{z}_{j}\partial \bar{z}_{k}}\Bigr)  \\ 
         i\Bigl( \frac{\partial^{2}P}{\partial z_{j}\partial z_{k}} - \frac{\partial^{2}P}{\partial \bar{z}_{j}\partial z_{k}} + \frac{\partial^{2}P}{\partial z_{j}\partial \bar{z}_{k}} - \frac{\partial^{2}P}{\partial \bar{z}_{j}\partial \bar{z}_{k}}\Bigr)   
        & \Bigl( -\frac{\partial^{2}P}{\partial z_{j}\partial z_{k}} + \frac{\partial^{2}P}{\partial \bar{z}_{j}\partial z_{k}} + \frac{\partial^{2}P}{\partial z_{j}\partial \bar{z}_{k}} - \frac{\partial^{2}P}{\partial \bar{z}_{j}\partial \bar{z}_{k}}\Bigr)   
        \end{array} 
       \right).  
\end{multline*}
We see that $H_{\Bbb{R}}(\Re P) + iH_{\Bbb{R}}(\Im P)$ is congruent to the Hessian $H(P)$. 
Therefore the rank of $H(P)$ is equal to the rank of $H_{\Bbb{R}}(\Re P) + iH_{\Bbb{R}}(\Im P)$. 
We assume that the rank of $H(P)$ is equal to $2n$. 
By Lemma~\ref{l51}, we can change the coordinates $(w_1, w_2)$ of $\Bbb{R}^{2}$ as 
\[
(w_1, w_2) \mapsto (w_1, w_1 + u_{0}w_{2}) 
\]
such that $u_0$ satisfies $\det(H_{\Bbb{R}}(\Re P) + u_{0}H_{\Bbb{R}}(\Im P)) \neq 0$. 
With these new coordinates, $P(\zz, \bar{\zz})$ satisfies $\det H_{\Bbb{R}}(\Im P) \neq 0$. 
Thus the rank of $H_{\Bbb{R}}(\Im P))$ is $2n$. 
\end{proof}

We show a necessary condition of $P(\zz, \bar{\zz})$ so that the rank of the representation matrix of $d^{2}P$ 
is equal to $2n-1$. 

\begin{lemma}
Let $\ww$ belong to $S_{1}(P)$. Suppose the rank of $H(P)$ is $2n$. 
The rank of the~representation matrix of $d^{2}P$ 
is equal to $2n-1$. 
\end{lemma}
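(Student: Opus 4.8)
The plan is to make $d^{2}P$ at $\ww$ explicit, express the rank of its representation matrix $\phi$ in terms of an ordinary real Hessian restricted to a hyperplane, and then control that Hessian with the hypothesis $\operatorname{rank}H(P)=2n$.

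Recall first that, by the computation in the proof of the preceding lemma, $H(P)$ is congruent to $H_{\Bbb{R}}(\Re P)(\ww)+iH_{\Bbb{R}}(\Im P)(\ww)$, so $\operatorname{rank}H(P)=2n$ means exactly that this matrix is invertible; by Lemma~\ref{l51} this forces the polynomial $u\mapsto\det\bigl(H_{\Bbb{R}}(\Re P)(\ww)+uH_{\Bbb{R}}(\Im P)(\ww)\bigr)$ to be not identically zero. Since $\ww\in S_{1}(P)$, the gradients $\nabla\Re P(\ww)$ and $\nabla\Im P(\ww)$ are linearly dependent and not both zero; let $Q$ be the $\Bbb{R}$-linear combination of $\Re P$ and $\Im P$, unique up to a nonzero scalar, with $\nabla Q(\ww)=\mathbf{0}$, and $R$ a complementary combination, so $dR_{\ww}\neq 0$, $L_{\ww}=\ker dP_{\ww}=\ker dR_{\ww}=\nabla R(\ww)^{\perp}$ is a hyperplane, and $G_{\ww}$ is a line. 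Choosing coordinates $(\eta_{1},\eta_{2})$ on the target $\Bbb{R}^{2}$ with $\eta_{1}\circ P=R$ and $\eta_{2}\circ P=Q$ (a harmless linear change) and substituting into the formulas for $\varphi^{1}$ and $d^{2}P=\pi_{1}\circ\varphi^{1}$ of Section~2.1, one finds, after identifying $G_{\ww}\cong\Bbb{R}$,
\[
d^{2}P_{\ww}(k)(t)=k^{\mathsf{T}}\,H_{\Bbb{R}}(Q)(\ww)\,t,\qquad k\in\Bbb{R}^{2n},\ t\in L_{\ww}.
\]

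Consequently $\operatorname{rank}\phi$ equals the rank of the linear map $L_{\ww}\to\Bbb{R}^{2n}$, $t\mapsto H_{\Bbb{R}}(Q)(\ww)\,t$, that is,
\[
\operatorname{rank}\phi=(2n-1)-\dim\bigl(\ker H_{\Bbb{R}}(Q)(\ww)\cap L_{\ww}\bigr);
\]
in particular $\operatorname{rank}\phi\le 2n-1$ always, and the assertion is equivalent to $\ker H_{\Bbb{R}}(Q)(\ww)\cap\nabla R(\ww)^{\perp}=\{\mathbf{0}\}$. Now $H_{\Bbb{R}}(Q)(\ww)$ is a scalar multiple of a real combination $aH_{\Bbb{R}}(\Re P)(\ww)+bH_{\Bbb{R}}(\Im P)(\ww)$, and a vector common to $\ker H_{\Bbb{R}}(Q)(\ww)$ and $\ker H_{\Bbb{R}}(R)(\ww)$ would lie in the kernel of every real combination of $H_{\Bbb{R}}(\Re P)(\ww)$ and $H_{\Bbb{R}}(\Im P)(\ww)$, hence would make the polynomial above vanish identically; so $\ker H_{\Bbb{R}}(Q)(\ww)\cap\ker H_{\Bbb{R}}(R)(\ww)=\{\mathbf{0}\}$, and in particular if $\det H_{\Bbb{R}}(Q)(\ww)\neq 0$ then $\ker H_{\Bbb{R}}(Q)(\ww)=\{\mathbf{0}\}$ and we are done. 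This reduces everything to the case $\det H_{\Bbb{R}}(Q)(\ww)=0$, in which one has to show that $\ker H_{\Bbb{R}}(Q)(\ww)$ is a single line not contained in $L_{\ww}$.

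That case is the step I expect to be the main obstacle. Invertibility of the \emph{complex} combination $H_{\Bbb{R}}(\Re P)(\ww)+iH_{\Bbb{R}}(\Im P)(\ww)$ does not by itself pin down the kernel of the particular \emph{real} combination $H_{\Bbb{R}}(Q)(\ww)$ that the singularity forces on us, so the argument must bring in more structure; I would use that in the situations where the lemma is applied $\ww$ is an isolated singularity of $P$. Concretely, if $\mathbf{0}\neq v\in\ker H_{\Bbb{R}}(Q)(\ww)$ lay in $L_{\ww}=\nabla R(\ww)^{\perp}$, I would linearize at $\ww$ the equations ``$\nabla Q$ proportional to $\nabla R$'' defining $S(P)$ near $\ww$ and expect to extract a nonconstant arc of singularities tangent to $v$, contradicting isolatedness, with the non-degeneracy of $H_{\Bbb{R}}(\Re P)(\ww)+iH_{\Bbb{R}}(\Im P)(\ww)$ (used via Lemma~\ref{l51}) guaranteeing that the remaining $2n-1$ directions do contribute to the rank. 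Turning this heuristic into a rigorous argument — especially choosing the right real combination to linearize against so that $H_{\Bbb{R}}(Q)(\ww)$ stays controlled — is where I expect the real work to lie.
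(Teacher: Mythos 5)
Your reduction of the statement to $\ker H_{\Bbb{R}}(Q)(\ww)\cap L_{\ww}=\{\mathbf 0\}$, where $Q$ is the real combination of $\Re P$ and $\Im P$ with $\nabla Q(\ww)=0$ (the cokernel component), is correct and agrees with the computation in the paper's proof, as does your disposal of the case $\det H_{\Bbb{R}}(Q)(\ww)\neq 0$. But the degenerate case is the entire content of the lemma, and there you offer only a heuristic (linearize the singular locus and invoke isolatedness) that you do not carry out; note also that isolatedness is not among the hypotheses of the lemma, and in the paper's application $S_{1}(F_{t})$ consists of whole $S^{1}$-orbits, so that extra input would not be available there anyway. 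As written, the proposal is not a proof.

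For comparison, the paper proceeds differently at exactly this point: it uses Lemma 1 (via Lemma 2) to replace the target coordinates by $(w_{1},w_{1}+u_{0}w_{2})$ so that the second component has Hessian of rank $2n$, straightens $\Re P$ into the first source coordinate $x'_{1}$ so that $L_{\ww}=\{a_{1}=0\}$ and the representation matrix of $d^{2}P$ becomes $H_{\Bbb{R}}(Q)$ with its first column deleted, and then asserts, via the chain rule, that $H_{\Bbb{R}}(Q)$ differs from $H_{\Bbb{R}}(\Im P)$ only in entries lying in the deleted column; deleting one column from a rank-$2n$ matrix leaves rank $2n-1$. That column-deletion argument is the idea missing from your write-up. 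I should add, however, that your unease about ``the particular real combination the singularity forces on us'' points at the delicate step of that argument: at $\ww$ one has $\partial^{2}\tilde x_{1}/\partial x'_{j}\partial x'_{k}=-\partial^{2}\Re P/\partial\tilde x_{j}\partial\tilde x_{k}$ for all $j,k$, so the chain-rule correction is $-sH_{\Bbb{R}}(\Re P)$ in every entry rather than only in the deleted column, and $H_{\Bbb{R}}(Q)$ is the Hessian of the intrinsic cokernel combination, which no target coordinate change can renormalize. Indeed, for $n=1$ the mixed polynomial $P=x_{1}+y_{1}^{2}+ix_{1}^{2}$ has $\mathbf 0\in S_{1}(P)$ and $H_{\Bbb{R}}(\Re P)+iH_{\Bbb{R}}(\Im P)=\operatorname{diag}(2i,2)$ invertible, yet $d^{2}P$ vanishes at $\mathbf 0$ because the cokernel component $x_{1}^{2}$ has Hessian annihilating $L_{\mathbf 0}=\ker dx_{1}$. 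So the obstruction you isolated is genuine: the stated hypotheses alone do not dispose of the degenerate case, and any completion of your argument must bring in additional information about $P$.
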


\begin{proof}
Since $\ww \in S_{1}(P)$, one of grad$(\Re P)(\ww)$ and grad$(\Im P)(\ww)$ is non-zero. 
We may assume that grad$(\Re P)(\ww)$ is non-zero. 
By a change of coordinates of $\Bbb{R}^{2}$ as in the proof of Lemma $2$, 
we assume that the rank of $H_{\Bbb{R}}(\Im P)$ is $2n$. 
By change of coordinates of $\Bbb{R}^{2n}$, 
we may further assume that $\partial \Re P / \partial x_{1}(\ww) \neq 0$ and 
write grad$\Im P(\ww) = s \ $grad$\Re P(\ww)$ for some $s \in \Bbb{R}$.  

We then change the coordinates of $\Bbb{R}^{2n}$ as follows: 
\[
  \tilde{x}_{1} = \sum_{\ell = 1}^{2n} \frac{\partial \Re P}{\partial x_{\ell}}(\ww) x_{\ell}, \ \ 
  \tilde{x}_{j} = x_{j} \ \text{for} \ j \geq 2.
\]
By an easy calculus, 
the gradient of $\Re P$ at $\ww$ is equal to $(1, 0,\dots, 0)$.  

We define the map $\psi : \Bbb{R}^{2n} \rightarrow \Bbb{R}^{2n}$ by 
\[
 (\tilde{x}_{1}, \dots, \tilde{x}_{2n}) \mapsto (\Re P, \tilde{x}_{2}, \dots, \tilde{x}_{2n}). 
\]
Since the Jacobi matrix of $\psi$ at $\ww$ is the identity matrix, there exists the inverse function $\psi^{-1}$ 
on a~neighborhood of $\ww$. Then the map $(\Re P, \Im P)$ can be represented as follows: 
\begin{equation*}
\begin{split}
(\Re P, \Im P)   &= P(\tilde{x}_{1}, \dots, \tilde{x}_{2n})  \\
                 &= (P \circ \psi^{-1})\circ \psi(\tilde{x}_{1}, \dots, \tilde{x}_{2n}) \\
                 &= (P \circ \psi^{-1})(\Re P, \tilde{x}_{2}, \dots, \tilde{x}_{2n}).                  
\end{split}
\end{equation*}
Let $(x'_{1}, \dots, x'_{2n})$ be the coordinates of $\Bbb{R}^{2n}$ given by 
\[
(x'_{1}, \dots, x'_{2n}) = (\Re P, \tilde{x}_{2}, \dots, \tilde{x}_{2n}). 
\]
Then there exists a map $Q : \Bbb{R}^{2n} \rightarrow \Bbb{R}$ such that 
$P \circ \psi^{-1}(x'_{1}, \dots, y'_{n}) = (x'_{1}, Q(x'_{1}, \dots, x'_{2n}))$.  
Since the~singularity $\ww$ belongs to $S_{1}(P)$, the gradient of $Q$ at $\ww$ can be represented by $(s, 0, \dots, 0)$. 
Let $(w_{1}, w_{2})$ be the coordinates of $\Bbb{R}^{2}$. 
If $s$ is not equal to $0$, we change the~coordinates of $\Bbb{R}^{2}$ as 
\[
  \tilde{w}_{1} = w_{1}, \ \tilde{w}_{2} = -sw_{1} + w_{2}, 
\]
so that $P(\zz, \bar{\zz}) = (x'_{1}, -sx'_{1} + Q(x'_{1}, \dots, x'_{2n}))$. 

Set $\sum_{j=1}^{2n} a_{j}(\partial/\partial x'_{j}) 
\in~X_{\ww}$, 
then we have 
\begin{equation*}
\begin{split}
dP\biggl(\sum_{j=1}^{2n} \Bigl(a_{j}\frac{\partial}{\partial x'_{j}} 
\Bigr)\biggr) 
&= \begin{pmatrix} 1 & 0  \dots  0 \end{pmatrix}\begin{pmatrix} a_1 \\ \vdots \\ a_{2n} \end{pmatrix}\frac{\partial}{\partial \tilde{w}_1} 
+ \begin{pmatrix} 0 & 0  \dots  0 \end{pmatrix}\begin{pmatrix} a_1 \\ \vdots \\ a_{2n} \end{pmatrix}\frac{\partial}{\partial \tilde{w}_2} \\
&= a_{1}\frac{\partial}{\partial \tilde{w}_1}.
\end{split}
\end{equation*}
So the kernel $L_{\ww}$ of $dP$ 
is $\{ \sum_{j=2}^{2n}a_{j}(\partial/\partial x'_{j}) 
\mid a_{j} \in \Bbb{R} \}$ 
and the cokernel $G_{\ww}$ of $dP$ is generated by $\partial/\partial \tilde{w}_2$. 
By the definition of $d^{2}P$, we see that the representation matrix of $d^{2}P$ is 
the Hessian $H_{\Bbb{R}}(Q)$ of $Q$ taking away the first column with these basis. 
Thus the~rank of the representation matrix of $d^{2}P$ 
is equal to $2n-1$ 
if and only if 
the rank of the Hessian $H_{\Bbb{R}}(Q)$ of $Q$ taking away the first column is $2n-1$. 

By the definition of $Q(x'_{1}, \dots, x'_{2n})$,  
$\frac{\partial^{2} Q}{\partial x'_{j}\partial x'_{k}}$ and $\frac{\partial^{2}\Im P}{\partial \tilde{x}_{\ell}\partial \tilde{x}_{m}}$ 
have the following relation: 
\begin{equation*}
\begin{split}
\frac{\partial^{2} Q}{\partial x'_{j}\partial x'_{k}} 
&= \sum_{\ell, m}\frac{\partial^{2}\Im P}{\partial \tilde{x}_{\ell}\partial \tilde{x}_{m}}
\frac{\partial \tilde{x}_{\ell}}{\partial x'_{j}}\frac{\partial \tilde{x}_{m}}{\partial x'_{k}} 
+ \frac{\partial Q}{\partial \tilde{x}_{\ell}}
\frac{\partial^{2} \tilde{x}_{\ell} }{\partial x'_{j}\partial x'_{k}} \\
&= \sum_{\ell, m}\frac{\partial^{2}\Im P}{\partial \tilde{x}_{\ell}\partial \tilde{x}_{m}} 
\frac{\partial \tilde{x}_{\ell}}{\partial x'_{j}}\frac{\partial \tilde{x}_{m}}{\partial x'_{k}} + 
s\frac{\partial^{2} \tilde{x}_{1} }{\partial x'_{j}\partial x'_{k}}. 
\end{split}
\end{equation*}
The Hessian $\text{H}_{\Bbb{R}}(Q)$ is equal to 
\[
H_{\Bbb{R}}(\Im P) +  
\biggl(s\Bigl(\frac{\partial^{2}\tilde{x}_{1}}{\partial x'_{1}\partial x'_{1}}\Bigr) \oplus O\biggr), 
\]
where $O$ is the $(2n-1) \times (2n-1)$ zero matrix. 
Therefore the rank of the representation matrix of~$d^{2}P$ 
is equal to $H_{\Bbb{R}}(\Im P)$ taking away the first column. 
Since rank$H_{\Bbb{R}}(\Im P) = 2n$ by the assumption, 
the~rank of the representation matrix of $d^{2}P$ is equal to $2n-1$. 


We finally consider the case of grad$(\Re P) = (0, \dots, 0)$. 
Since $\ww \in S_{1}(P)$, grad$(\Im P)(\ww)$ is not equal to $(0, \dots, 0)$. 
We change the~coordinates of $\Bbb{R}^{2}$ as 
\[
(w_{1}, w_{2}) \mapsto (w_{1} + w_{2}, -w_{1} + w_{2}). 
\]
Then we have 
\begin{equation*}
\begin{split}
&\Bigl(\bigl(\frac{\partial^{2}\Re P}{\partial x_{j} \partial x_{k}}\bigr) + \bigl(\frac{\partial^{2}\Im P}{\partial x_{j} \partial x_{k}}\bigr)\Bigr) + i 
\Bigl(-\bigl(\frac{\partial^{2}\Re P}{\partial x_{j} \partial x_{k}}\bigr) + \bigl(\frac{\partial^{2}\Im P}{\partial x_{j} \partial x_{k}}\bigr)\Bigr) \\ 
= &(1 - i)\Bigl(\bigl(\frac{\partial^{2}\Re P}{\partial x_{j} \partial x_{k}}\bigr) + i \bigl(\frac{\partial^{2}\Im P}{\partial x_{j} \partial x_{k}}\bigr)\Bigr). 
\end{split}
\end{equation*}
Thus the rank of the Hessian after changing the coordinates of $\Bbb{R}^{2}$ is same that of $H(P)$. 
With these new coordinates, grad$(\Re P)$ and grad$(\Im P)$ are not equal to $(0, \dots, 0)$. 
This case had been proved in the previous paragraph. 
\end{proof}

\section{Proof of Theorem~\ref{thm1}}
Let $f(\zz)$ and $g(\zz)$ be 
complex polynomials such that $f(\zz)\overline{g}(\zz)$ has an isolated singularity at the origin. 
We define the $\Bbb{C}^{*}$-action on $\Bbb{C}^{2}$: 
\[
c \circ (z_{1}, z_{2}) := (c^{q}z_{1}, c^{p}z_{2}), \ \ c \in \Bbb{C}^{*}. 
\]
Assume that $f(\zz)$ and $g(\zz)$ are convenient weighted homogeneous complex polynomials, i.e., 
$f(c \circ \zz) = c^{pqm}f(\zz)$ and $g(c \circ \zz) = c^{pqn}g(\zz)$. 
Assume that $m > n$ and $q \geq p$. 
We prepare two lemmas. 

\begin{lemma}
Let $g(\zz)$ be a convenient weighted homogeneous complex polynomial 
which has an isolated singularity at the origin. 
Suppose that $g(\zz)$ does not have the following form: 
\[
g(\zz) = \beta_{1} z_{1} + \beta_{2} z_{2}^{k}.
\]
Then $\det H_{\Bbb{C}}(g)(\zz) :=  
\frac{\partial^{2}g}{\partial z_{1} \partial z_{1}}(\zz)\frac{\partial^{2}g}{\partial z_{2} \partial z_{2}}(\zz) - 
\frac{\partial^{2}g}{\partial z_{1} \partial z_{2}}(\zz)\frac{\partial^{2}g}{\partial z_{2} \partial z_{1}}(\zz)$ 
is not identically equal to $0$. 
\end{lemma}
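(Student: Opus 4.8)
The plan is to argue by contradiction: suppose $\det H_{\Bbb{C}}(g)(\zz) \equiv 0$, and deduce that $g$ must have the excluded form $\beta_1 z_1 + \beta_2 z_2^k$ (or fails to have an isolated singularity, or fails to be convenient). Since $g$ is weighted homogeneous of degree $pqn$ with weights $(q,p)$, each second partial $\partial^2 g/\partial z_j \partial z_k$ is again weighted homogeneous, so $\det H_{\Bbb{C}}(g)$ is a weighted homogeneous polynomial in $(z_1,z_2)$; being identically zero is therefore a strong algebraic constraint. First I would write $g(\zz) = \sum_{\ell} a_\ell z_1^{\ell} z_2^{r(\ell)}$ where, by weighted homogeneity, the exponents satisfy $q\ell + p\, r(\ell) = pqn$ for every monomial actually occurring; convenience forces a pure power $z_1^{a}$ ($qa = pqn$, so $a = pn$) and a pure power $z_2^{b}$ ($pb = pqn$, so $b = qn$) to appear with nonzero coefficients, unless one of those "pure power" monomials degenerates to degree $1$, i.e. $z_1$ itself when $pn = 1$ — this is exactly where the exceptional form $\beta_1 z_1 + \beta_2 z_2^k$ enters.

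The key computational step is to analyze when $\det H_{\Bbb{C}}(g) = g_{z_1 z_1} g_{z_2 z_2} - g_{z_1 z_2}^2 \equiv 0$. I would treat $g$ as a polynomial in $z_1$ with coefficients polynomial in $z_2$ (or pass to the affine chart $z_2 = 1$, using weighted homogeneity to reconstruct $g$ up to a monomial factor). The vanishing of the Hessian determinant of a two-variable function is classically equivalent, away from a proper subvariety, to $g$ being (affinely) a function of a single linear form, i.e. $g = \varphi(\lambda_1 z_1 + \lambda_2 z_2) + (\text{linear})$; but weighted homogeneity with weights $(q,p)$ is incompatible with a genuine linear form $\lambda_1 z_1 + \lambda_2 z_2$ appearing inside $\varphi$ unless $p = q$, and even then it forces $g$ to be a power of that form. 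Combining this structural conclusion with the convenience hypothesis and with the requirement that $\mathbf o$ be an isolated singularity of $g$ (so $\nabla g$ vanishes only at the origin), I would show the only surviving possibility is $g = \beta_2 z_2^k$ plus at most a linear $z_1$-term, and matching degrees pins this down to $\beta_1 z_1 + \beta_2 z_2^k$, the excluded case.

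Concretely, the cleanest route is: if $\partial g/\partial z_1$ is itself identically zero then $g = g(z_2)$ is a pure power $z_2^{qn}$ by convenience, which does have an isolated singularity only if $qn \ge 2$; but then $H_{\Bbb{C}}(g)$ has rank $1$, not determinant-zero-with-the-stated-conclusion — so I must instead observe that in this degenerate case $g$ is NOT convenient unless we also allow a $z_1$-term, returning us to the excluded form. If $\partial g/\partial z_1 \not\equiv 0$, then $g_{z_1 z_1}$ and $g_{z_1 z_2}$ are not both identically zero; from $g_{z_1 z_1} g_{z_2 z_2} = g_{z_1 z_2}^2$ one gets that the gradient of $\partial g/\partial z_1$ is proportional to the gradient of $\partial g / \partial z_2$ wherever these are defined, hence (since we are in the polynomial ring, a UFD) there is a one-variable polynomial $\phi$ and a linear form $L = c_1 z_1 + c_2 z_2$ with $\partial g/\partial z_1 = \phi'(L)\,c_1$ up to constants, forcing $g = \phi(L) + (\text{terms killed by }\partial_{z_1})$; weighted homogeneity then collapses $L$ to a coordinate.

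The main obstacle I anticipate is the last structural step — extracting from $\det H_{\Bbb{C}}(g) \equiv 0$ the conclusion that $g$ is a power of a linear form plus a term independent of $z_1$, carefully enough to handle the non-reduced and the weighted cases. The generic ODE-style argument ("vanishing Hessian $\Rightarrow$ function of one variable") needs to be made rigorous over the polynomial ring, and one must track the finitely many points where the proportionality of gradients could break down; weighted homogeneity is the tool that rules out the bad behavior, since a weighted-homogeneous relation that holds on a Zariski-dense set holds everywhere. Once that is in place, the bookkeeping with the exponents $pn$, $qn$, $k$ and the convenience and isolated-singularity hypotheses is routine and I would not belabor it.
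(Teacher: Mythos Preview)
Your strategy is sound and would succeed, but it takes a genuinely different route from the paper's proof.

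The paper's argument is entirely elementary. Writing $g=\sum_j c_j z_1^{\ell_j}z_2^{k_j}$ with $\ell_1\ge 2$, $k_1=0$, $\ell_1>\ell_2>\cdots$ (the hypothesis excludes $\ell_1=1$), it simply compares the $z_1$-degrees of the two products: $\deg_{z_1}(g_{z_1z_1}g_{z_2z_2})=\ell_1+\ell_2-2$ while $\deg_{z_1}(g_{z_1z_2}^2)=2\ell_2-2$, and these differ unless the subleading monomial has $k_2=1$, which together with weighted homogeneity forces $p=q$. In that homogeneous case the paper factors $g=(z_1-\tilde c z_2)\tilde g$ (squarefree, from the isolated-singularity hypothesis), computes $\det H_{\Bbb C}(g)$ on the line $z_1=\tilde c z_2$ to be $-(\tilde c\,\tilde g_{z_1}+\tilde g_{z_2})^2$, and combines this with the Euler relation to force $\tilde g$ to vanish on that line, contradicting the coprimality.

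Your approach trades this hands-on computation for the structural fact that a two-variable polynomial with identically vanishing Hessian determinant must be, up to an affine term, a polynomial in a single linear form $L$. You then let weighted homogeneity, convenience, and the isolated-singularity hypothesis squeeze out the excluded form. This is correct and conceptually cleaner, and it explains \emph{why} the excluded case is special rather than discovering it through degree bookkeeping. The cost is exactly what you flag: the step ``$g_{z_1z_1}g_{z_2z_2}=g_{z_1z_2}^2\Rightarrow g=\phi(L)+\text{linear}$'' is the entire content of the lemma in disguise, and the pointwise proportionality of $\nabla g_{z_1}$ and $\nabla g_{z_2}$ does not by itself give a \emph{constant} proportionality factor---one needs the additional observation that the image of $\nabla g$ lies in a curve which, by the integrability constraint $(g_{z_1})_{z_2}=(g_{z_2})_{z_1}$, is forced to be a line. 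The paper sidesteps this entirely with its degree argument. Your handling of the degenerate case $g_{z_1}\equiv 0$ is also slightly tangled: there the contradiction is immediate, since then $g=g(z_2)$ gives $g(z_1,0)=0$, violating convenience.
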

\begin{proof}
Put 
$g(\zz) = \sum_{j}c_{j}z_{1}^{\ell_{j}}z_{2}^{k_{j}}$, 
where $\ell_{1} \geq 2, k_{1} = 0$ and $\ell_{j} > \ell_{j'}$ for $j < j'$. 
We calculate the degrees 
$\deg_{z_{1}} \frac{\partial^{2}g}{\partial z_{1} \partial z_{1}}(\zz)\frac{\partial^{2}g}{\partial z_{2} \partial z_{2}}(\zz)$ and 
$\deg_{z_{1}} \frac{\partial^{2}g}{\partial z_{1} \partial z_{2}}(\zz)\frac{\partial^{2}g}{\partial z_{2} \partial z_{1}}(\zz)$ of $z_{1}$. 
If $k_{2} \geq 2$, 
two degrees are $\ell_{1} + \ell_{2} - 2$ and $2(\ell_{2} - 1)$ respectively. 
Since $\ell_{1}$ is greater than $\ell_{2}$, two degrees are not equal. 
If $k_{2} = 1$, 
by using equation $(1)$, $\ell_{1} = \ell_{2} + (p/q)$. 
If $q$ is greater than $p$, $\ell_{1}$ and $\ell_{2}$ does not satisfy $\ell_{1} = \ell_{2} + (p/q)$. 

So we may assume that $p = q$, $k_{2} = 1$. 
Then $g(\zz)$ has the form: 
\[
g(\zz) = (z_{1} - \tilde{c}z_{2})\tilde{g}(\zz), 
\]
where $\tilde{g}(\zz)$ is a weighted homogeneous polynomial such that 
$\tilde{g}(\zz)$ and $z_{1} - \tilde{c}z_{2}$ have 
no common branches.  
On $\{ (z_{1}, z_{2}) \in \Bbb{C}^{2} \mid z_{1} - \tilde{c}z_{2} = 0\}$, 
the determinant of $H_{\Bbb{C}}(g)(\zz)$ is equal to 
$-\Bigl(\tilde{c}\frac{\partial \tilde{g}}{\partial z_{1}} + \frac{\partial \tilde{g}}{\partial z_{2}}\Bigr)^{2}$. 
If $\det H_{\Bbb{C}}(g)(\zz)$ is identically equal to $0$, the differentials of $\tilde{g}(\zz)$ satisfy 
\[
\tilde{c}\frac{\partial \tilde{g}}{\partial z_{1}} + \frac{\partial \tilde{g}}{\partial z_{2}} = 0. 
\]
Since $\tilde{g}(\zz)$ is a weighted homogeneous polynomial, by using equation $(1)$, $\tilde{g}(\zz)$ 
is equal to 
\[
\frac{1}{n - 1}\Bigl( z_{1}\frac{\partial \tilde{g}}{\partial z_{1}} + z_{2}\frac{\partial \tilde{g}}{\partial z_{2}} \Bigr). 
\]
So $\tilde{g}(\zz)$ vanishes on $\{ (z_{1}, z_{2}) \in \Bbb{C}^{2} \mid z_{1} - \tilde{c}z_{2} = 0\}$. 
Since $\tilde{g}(\zz)$ and $z_{1} - \tilde{c}z_{2}$ have 
no common branches, 
this is a contradiction. 
Thus $\det H_{\Bbb{C}}(g)(\zz)$ is not identically equal to $0$ for $\ell_{1} \geq 2$. 
\end{proof}

If $g(\zz)$ has the form $\beta _{1} z_{1} + \beta_{2} z_{2}^{k}$, 
the determinant of $H(f\overline{g})$ is equal to 
\begin{equation*}
\begin{split}
f(\zz)&\biggl(2\frac{\partial^{2}f}{\partial z_{1} \partial z_{2}}(\zz)\frac{\partial f}{\partial z_{1}}(\zz)\frac{\partial f}{\partial z_{2}}(\zz) - 
\frac{\partial^{2}f}{\partial z_{1} \partial z_{1}}(\zz)\Bigr(\frac{\partial f}{\partial z_{2}}(\zz)\Bigr)^{2} - \frac{\partial^{2}f}{\partial z_{2} \partial z_{2}}(\zz)\Bigr(\frac{\partial f}{\partial z_{1}}(\zz)\Bigr)^{2} \biggr) \\
\times &\biggl(\overline{\beta_{1}^{2}g(\zz)\frac{\partial^{2}g}{\partial z_{2} \partial z_{2}}(\zz)}\biggr). 
\end{split}
\end{equation*}
\begin{lemma}
Let $g(\zz) = \beta_{1} z_{1} + \beta_{2} z_{2}^{k}$ with $k \geq 2$. 
Then the determinant of $H(f\overline{g})$ is not identically equal to $0$. 
\end{lemma}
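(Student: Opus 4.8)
The plan is to read everything off the displayed identity for $\det H(f\overline{g})$ stated just above. It exhibits $\det H(f\overline{g})$ as the product of three polynomials: the factor $f(\zz)$; the holomorphic factor
\[
\Delta(f):=2\frac{\partial^{2}f}{\partial z_{1}\partial z_{2}}\frac{\partial f}{\partial z_{1}}\frac{\partial f}{\partial z_{2}}-\frac{\partial^{2}f}{\partial z_{1}\partial z_{1}}\Bigl(\frac{\partial f}{\partial z_{2}}\Bigr)^{2}-\frac{\partial^{2}f}{\partial z_{2}\partial z_{2}}\Bigl(\frac{\partial f}{\partial z_{1}}\Bigr)^{2},
\]
which is a polynomial in $z_{1},z_{2}$; and the antiholomorphic factor $\overline{\beta_{1}^{2}\,g(\zz)\,\frac{\partial^{2}g}{\partial z_{2}\partial z_{2}}(\zz)}$, which is a polynomial in $\bar{z}_{1},\bar{z}_{2}$. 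A mixed polynomial vanishes identically exactly when it is the zero element of the polynomial ring $\Bbb{C}[z_{1},z_{2},\bar{z}_{1},\bar{z}_{2}]$ in four indeterminates, and this ring is an integral domain; hence it suffices to show that each of the three factors is not identically zero.

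The factor $f(\zz)$ is nonzero by hypothesis. For the antiholomorphic factor, $g=\beta_{1}z_{1}+\beta_{2}z_{2}^{k}$ is convenient, so $\beta_{1}\neq0$ (otherwise $g(z_{1},0)\equiv0$) and $\beta_{2}\neq0$ (otherwise $g(0,z_{2})\equiv0$); since $k\geq2$ we get $\frac{\partial^{2}g}{\partial z_{2}\partial z_{2}}=k(k-1)\beta_{2}z_{2}^{k-2}\not\equiv0$ together with $g\not\equiv0$, so $\beta_{1}^{2}\,g\,\frac{\partial^{2}g}{\partial z_{2}\partial z_{2}}\not\equiv0$ and therefore so is its conjugate.

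It remains to prove $\Delta(f)\not\equiv0$, which I would establish by a leading-term computation in the single variable $z_{1}$. As $g=\beta_{1}z_{1}+\beta_{2}z_{2}^{k}$ is weighted homogeneous for $c\circ\zz=(c^{q}z_{1},c^{p}z_{2})$, comparing the weighted degrees of $z_{1}$ and $z_{2}^{k}$ gives $q=pk$, hence $0<p<q$. Being convenient and weighted homogeneous, $f$ has a unique monomial $c_{0}z_{1}^{M}$ involving $z_{1}$ only, with $M=pm\geq2$ (recall $m>n\geq1$); since in a weighted homogeneous polynomial the $z_{1}$-degree of a monomial determines its $z_{2}$-degree, $c_{0}z_{1}^{M}$ is the unique monomial of $f$ of maximal $z_{1}$-degree, and $f$ has a unique monomial $c_{1}z_{1}^{M'}z_{2}^{N'}$ of second-largest $z_{1}$-degree, with $0\leq M'\leq M-1$. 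One checks $N'\geq2$: $N'=0$ would give a second monomial of $f$ in $z_{1}$ alone, and $N'=1$ would force $M'=M-p/q\notin\Bbb{Z}$ (as $0<p/q<1$), both impossible. Differentiating these two monomials then shows: the top $z_{1}$-part of $\frac{\partial f}{\partial z_{1}}$ is $Mc_{0}z_{1}^{M-1}$ and that of $\frac{\partial^{2}f}{\partial z_{1}\partial z_{1}}$ has $z_{1}$-degree $M-2$, while the top $z_{1}$-parts of $\frac{\partial f}{\partial z_{2}}$, $\frac{\partial^{2}f}{\partial z_{1}\partial z_{2}}$ and $\frac{\partial^{2}f}{\partial z_{2}\partial z_{2}}$ all come from $c_{1}z_{1}^{M'}z_{2}^{N'}$ and have $z_{1}$-degrees $M'$, at most $M'-1$, and $M'$, respectively. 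Hence the term $\frac{\partial^{2}f}{\partial z_{2}\partial z_{2}}\bigl(\frac{\partial f}{\partial z_{1}}\bigr)^{2}$ has the single monomial $c_{0}^{2}c_{1}M^{2}N'(N'-1)\,z_{1}^{2M-2+M'}z_{2}^{N'-2}\neq0$ of maximal $z_{1}$-degree $2M-2+M'$, whereas the other two summands of $\Delta(f)$ have $z_{1}$-degree at most $M+2M'-2<2M-2+M'$ (the gap is $M-M'\geq1$). So that monomial cannot be cancelled in $\Delta(f)$, and $\Delta(f)\not\equiv0$.

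Combining the three steps with the absence of zero divisors in $\Bbb{C}[z_{1},z_{2},\bar{z}_{1},\bar{z}_{2}]$ gives $\det H(f\overline{g})\not\equiv0$. The step that needs the most care is the assertion $N'\geq2$, i.e.\ that the monomial of $f$ of second-largest $z_{1}$-degree carries $z_{2}$-degree at least $2$ --- this is precisely where convenience of $f$ and of $g$, the hypothesis $k\geq2$, and weighted homogeneity are used; the rest is differentiation of single monomials and degree counting.
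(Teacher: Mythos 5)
Your proof is correct, and it handles the essential step by a genuinely different argument than the paper's. Both proofs start from the displayed factorization $\det H(f\overline{g})=f\cdot\Delta(f)\cdot\overline{\beta_{1}^{2}\,g\,\partial^{2}g/\partial z_{2}\partial z_{2}}$ and dispose of the antiholomorphic factor in the same way (convenience of $g$ gives $\beta_{1},\beta_{2}\neq 0$, and $k\geq 2$ gives $\partial^{2}g/\partial z_{2}\partial z_{2}\not\equiv 0$); your explicit remark that $\Bbb{C}[z_{1},z_{2},\bar{z}_{1},\bar{z}_{2}]$ is an integral domain and that identical vanishing on $\Bbb{C}^{2}$ means being the zero polynomial is correct and worth stating. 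The divergence is in showing $\Delta(f)=2f_{12}f_{1}f_{2}-f_{11}f_{2}^{2}-f_{22}f_{1}^{2}\not\equiv 0$: the paper rewrites $\Delta(f)$ via the Euler identity as a constant times $\det H_{\Bbb{C}}(f)\cdot(z_{1}^{2}f_{11}+2z_{1}z_{2}f_{12}+z_{2}^{2}f_{22})$ and quotes Lemma 4, whereas you run a direct leading-term computation in $z_{1}$, using convenience to isolate the unique top monomial $c_{0}z_{1}^{M}$ and weighted homogeneity with $q=pk>p$ to force the second-highest monomial to have $z_{2}$-exponent $N'\geq 2$, so that $-f_{22}f_{1}^{2}$ contributes an uncancellable term of $z_{1}$-degree $2M+M'-2$, strictly above the other two summands. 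Your route is longer but self-contained, and it is actually more robust: the paper's identity with the factor $(m-1)^{2}$ is the one valid for $p=q=1$, while the hypothesis $g=\beta_{1}z_{1}+\beta_{2}z_{2}^{k}$ with $k\geq 2$ forces $p=1$, $q=k\geq 2$, in which case the two differentiated Euler relations $qz_{1}f_{11}+pz_{2}f_{12}=(pqm-q)f_{1}$ and $qz_{1}f_{12}+pz_{2}f_{22}=(pqm-p)f_{2}$ carry different constants and the clean factorization through $\det H_{\Bbb{C}}(f)$ fails as written (for $f=z_{1}^{2}+z_{2}^{4}$, $p=1$, $q=2$ one gets $\Delta(f)=-48z_{1}^{2}z_{2}^{2}-32z_{2}^{6}$ but $-\det H_{\Bbb{C}}(f)\cdot(z_{1}^{2}f_{11}+2z_{1}z_{2}f_{12}+z_{2}^{2}f_{22})=-48z_{1}^{2}z_{2}^{2}-288z_{2}^{6}$). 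Your degree argument bypasses this issue entirely, at the cost of the bookkeeping around $M$, $M'$, $N'$, all of which checks out.
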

\begin{proof}
By the assumption, $\frac{\partial^{2}g}{\partial z_{2} \partial z_{2}}(\zz) \not\equiv 0$. 
By using equation $(1)$, the second differentials of $f(\zz)$ satisfy 
\begin{equation*}
\begin{split}
&2\frac{\partial^{2}f}{\partial z_{1} \partial z_{2}}(\zz)\frac{\partial f}{\partial z_{1}}(\zz)\frac{\partial f}{\partial z_{2}}(\zz) - 
\frac{\partial^{2}f}{\partial z_{1} \partial z_{1}}(\zz)\Bigr(\frac{\partial f}{\partial z_{2}}(\zz)\Bigr)^{2} - \frac{\partial^{2}f}{\partial z_{2} \partial z_{2}}(\zz)\Bigr(\frac{\partial f}{\partial z_{1}}(\zz)\Bigr)^{2} \\
&= 
(m - 1)^{2}\biggl( \Bigl(\frac{\partial^{2}f}{\partial z_{1} \partial z_{2}}(\zz)\Bigr)^{2} - \frac{\partial^{2}f}{\partial z_{1} \partial z_{1}}(\zz)\frac{\partial^{2}f}{\partial z_{2} \partial z_{2}}(\zz)\biggr) \\
& \ \ \ \ \ \ \ \ \ \ \ \ \times \biggl(z_{1}^{2}\frac{\partial^{2}f}{\partial z_{1} \partial z_{1}}(\zz) + 
2z_{1}z_{2}\frac{\partial^{2}f}{\partial z_{1} \partial z_{2}}(\zz) + 
z_{2}^{2}\frac{\partial^{2}f}{\partial z_{2} \partial z_{2}}(\zz)  \biggr) \\
&= -(m - 1)^{2}\det H_{\Bbb{C}}(f)\biggl(z_{1}^{2}\frac{\partial^{2}f}{\partial z_{1} \partial z_{1}}(\zz) + 
2z_{1}z_{2}\frac{\partial^{2}f}{\partial z_{1} \partial z_{2}}(\zz) + 
z_{2}^{2}\frac{\partial^{2}f}{\partial z_{2} \partial z_{2}}(\zz)  \biggr). 
\end{split}
\end{equation*}
Since $m > n$, the degree of $f(\zz)$ of $z_{1}$ is greater than $1$. 
By Lemma $4$, $\det H_{\Bbb{C}}(f)(\zz)$ is not identically equal to $0$. 
This implies $z_{1}^{2}\frac{\partial^{2}f}{\partial z_{1} \partial z_{1}}(\zz) + 
2z_{1}z_{2}\frac{\partial^{2}f}{\partial z_{1} \partial z_{2}}(\zz) + 
z_{2}^{2}\frac{\partial^{2}f}{\partial z_{2} \partial z_{2}}(\zz) \not\equiv 0$. 
Thus the determinant of $H(f\overline{g})$ is not identically equal to $0$. 
\end{proof}

To prove Theorem $1$, we choose $h(\zz)$ such that the determinant of $H(f\overline{g} + th)$ is not identically equal to $0$. 
We divide the proof of Theorem $1$ into two cases: 
\begin{enumerate}
\item
$g(\zz)$ is not a linear function,  
\item
$g(\zz) = \beta_{1} z_{1} + \beta_{2} z_{2}$. 
\end{enumerate}

\subsection{Case $(1)$.}

We define a deformation of $f(\zz)\bar{g}(\zz)$ as follows: 
\[
F_{t}(\zz) = f(\zz)\overline{g(\zz)} + th(\zz),
\]
where $h(\zz) = \gamma_{1} z_{1}^{p(m-n)} + \gamma_{2} z_{2}^{q(m-n)}$ and $0 < t << 1$. 
Let $s$ be a complex number such that $\lvert s \rvert = 1$. 
Then $F_{t}(s \circ \zz)$ satisfies 
\[
F_{t}(s \circ \zz) = f(s \circ \zz)\overline{g}(s \circ \zz) + th(s \circ \zz) = s^{pq(m-n)}F_{t}(\zz). 
\]
So $F_{t}(\zz)$ is also a polar weighted homogeneous mixed polynomial. 
Suppose that $m$ is greater than $n$. 
Assume that $f(\zz)\overline{g}(\zz)$ and $h(\zz)$ have no common branches. 
\begin{lemma}
Let $F_{t}(\zz)$ be the above deformation of $f(\zz)\overline{g}(\zz)$. 
If $S_{2}(F_{t}) \neq \emptyset$, $S_{2}(F_{t})$ is only the origin. 
If $S_{2}(F_{t}) = \emptyset$, the origin is a regular point of $F_{t}(\zz)$.
\end{lemma}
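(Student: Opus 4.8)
The plan is to analyze $S_{2}(F_{t})$ directly using Proposition 1. A point $\ww$ lies in $S_{2}(F_{t})$ exactly when $\operatorname{rank} d(F_t)_{\ww} = 0$, i.e.\ all four partial derivatives $\partial F_t/\partial z_j$ and $\partial F_t/\partial \bar z_j$ vanish at $\ww$. Now $F_t(\zz) = f(\zz)\overline{g(\zz)} + th(\zz)$ with $h$ holomorphic, so $\partial F_t/\partial \bar z_j = f(\zz)\,\overline{\partial g/\partial z_j}$, while $\partial F_t/\partial z_j = (\partial f/\partial z_j)\overline{g(\zz)} + t\,\partial h/\partial z_j$. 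First I would treat the vanishing of the antiholomorphic derivatives: $f(\ww)\,\overline{\partial g/\partial z_j(\ww)} = 0$ for $j=1,2$. Since $g$ is convenient with an isolated singularity, $\operatorname{grad} g$ vanishes only at $\mathbf o$ (for $n\ge 2$; in the linear subcase $g$ is handled separately, but here we are in Case (1)), so either $\ww = \mathbf o$ or $f(\ww) = 0$.

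Next I would dispose of the branch $f(\ww) = 0$, $\ww\neq\mathbf o$. On $\{f=0\}$ the holomorphic derivatives reduce to $\partial F_t/\partial z_j(\ww) = t\,\partial h/\partial z_j(\ww)$, so $\ww\in S_2(F_t)$ forces $\partial h/\partial z_1(\ww) = \partial h/\partial z_2(\ww) = 0$. With $h(\zz) = \gamma_1 z_1^{p(m-n)} + \gamma_2 z_2^{q(m-n)}$, this forces $z_1 = z_2 = 0$ (using $m>n$ so the exponents are at least $1$, and $\gamma_1,\gamma_2\neq 0$), contradiction. Here one also uses the standing assumption that $f\overline{g}$ and $h$ have no common branches, so that $\{f\overline g = 0\}\cap\{h = 0\}$ meets $\mathbb C^2$ only at $\mathbf o$ — this rules out the possibility that $F_t$ itself vanishes along a curve through a would-be singular point. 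Hence the only candidate for a point of $S_2(F_t)$ is $\mathbf o$ itself, which proves the first assertion.

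For the second assertion, suppose $S_2(F_t) = \emptyset$; I must show $\mathbf o$ is a regular point of $F_t$, i.e.\ $\operatorname{rank} d(F_t)_{\mathbf o} \geq 1$. Equivalently I must rule out that all four first-order partials of $F_t$ vanish at $\mathbf o$. At $\mathbf o$ the antiholomorphic derivatives $f(\mathbf o)\,\overline{\partial g/\partial z_j(\mathbf o)}$ vanish automatically (since $f(\mathbf o)=0$), and the holomorphic ones are $(\partial f/\partial z_j)(\mathbf o)\,\overline{g(\mathbf o)} + t\,(\partial h/\partial z_j)(\mathbf o) = t\,(\partial h/\partial z_j)(\mathbf o)$ because $g(\mathbf o)=0$. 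So $\mathbf o$ is singular for $F_t$ iff $(\partial h/\partial z_1)(\mathbf o) = (\partial h/\partial z_2)(\mathbf o) = 0$; with $h = \gamma_1 z_1^{p(m-n)} + \gamma_2 z_2^{q(m-n)}$ this happens iff both exponents $p(m-n)$ and $q(m-n)$ are $\geq 2$. If instead one of them equals $1$ — which occurs precisely when $p=1$ (resp.\ $q=1$) and $m-n=1$ — then $\mathbf o$ is automatically a regular point and there is nothing to prove. In the remaining case both exponents are $\geq 2$, so $\mathbf o$ is a singular point of $F_t$; I claim it must then lie in $S_2(F_t)$. Indeed $d(F_t)_{\mathbf o}$ has all holomorphic and antiholomorphic components zero, so its rank is $0$, placing $\mathbf o$ in $S_2(F_t)$ — contradicting $S_2(F_t)=\emptyset$. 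Thus when $S_2(F_t)=\emptyset$, the origin is regular.

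The main obstacle I anticipate is the bookkeeping around the exponents $p(m-n)$, $q(m-n)$ and making sure the convenience and no-common-branch hypotheses are invoked correctly: one needs $\operatorname{grad} g$ to vanish only at the origin (which uses the isolated-singularity hypothesis for $g$ itself, reducing to Case (1) being genuinely nonlinear), and one needs the no-common-branch assumption between $f\overline g$ and $h$ to prevent $F_t$ from vanishing identically along a curve where a non-origin point could sneak into $S_2(F_t)$. Everything else is a routine unwinding of $\partial/\partial z_j$ and $\partial/\partial\bar z_j$ applied to a product $f\bar g$ plus a holomorphic term.
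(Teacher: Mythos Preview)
There is a genuine gap in the treatment of the first assertion. You write that on $\{f=0\}$ the holomorphic derivatives reduce to $\partial F_t/\partial z_j(\ww) = t\,\partial h/\partial z_j(\ww)$. This is false: the holomorphic derivative is $(\partial f/\partial z_j)(\ww)\,\overline{g(\ww)} + t\,\partial h/\partial z_j(\ww)$, and the condition $f(\ww)=0$ does not kill the first term, since at a smooth point of $\{f=0\}$ away from $\mathbf o$ the gradient of $f$ is nonzero, and $g(\ww)$ is typically nonzero as well (indeed $f$ and $g$ share no common branch because $f\bar g$ has an isolated singularity). Consequently your conclusion $\partial h/\partial z_j(\ww)=0$ does not follow from $\ww\in S_2(F_t)$ and $f(\ww)=0$ alone.

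The paper repairs this by invoking the Euler identity (equation~(1)) rather than a case split on the vanishing of $f$. From $(\partial f/\partial z_j)(\ww)\,\overline{g(\ww)} + t\,\partial h/\partial z_j(\ww)=0$ for $j=1,2$, multiply by $p_j z_j$ (with $p_1=q$, $p_2=p$) and sum: by Euler for $f$ and $h$ this gives $pqm\,f(\ww)\overline{g(\ww)} + pq(m-n)\,t\,h(\ww)=0$. Similarly, from $f(\ww)\,\overline{\partial g/\partial z_j(\ww)}=0$, multiplying by $p_j\bar z_j$ and summing yields $pqn\,f(\ww)\overline{g(\ww)}=0$. Hence $f(\ww)\overline{g(\ww)}=0$ and $h(\ww)=0$ simultaneously, and the no-common-branch hypothesis forces $\ww=\mathbf o$. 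This is the missing idea.

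For the second assertion your argument is essentially correct, but the sentence ``$\operatorname{rank} d(F_t)_{\mathbf o}\ge 1$'' is not what ``regular'' means; regular means rank $2$. The reason your argument still works is that at $\mathbf o$ all antiholomorphic derivatives vanish, so the real differential is $\mathbb C$-linear and hence has real rank $0$ or $2$, never $1$; you should make this explicit. The paper handles this step by appealing to Proposition~1: once $\partial F_t/\partial\bar z_j(\mathbf o)=0$ for all $j$, any singularity at $\mathbf o$ would force $\partial F_t/\partial z_j(\mathbf o)=0$ as well, placing $\mathbf o$ in $S_2(F_t)$.
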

\begin{proof}
If $\ww$ belongs to $S_{2}(F_{t})$, by Proposition $1$, 
the singularity $\ww$~satisfies 
$\frac{\partial f}{\partial z_{j}}(\ww)\overline{g}(\ww) + t\frac{\partial h}{\partial z_{j}}(\ww) = 0$ and 
$f(\ww)\overline{\frac{\partial g}{\partial z_{j}}}(\ww) = 0$ for $j = 1, 2$. 
By using equation $(1)$, $f(\zz)\overline{g}(\ww) = 0$ and $th(\ww) = 0$. 
By the~assumption of $h(\ww)$, $\ww$ is equal to the origin. 
Since the origin $\mathbf o$ is an~isolated singularity of $f(\zz)\overline{g}(\zz)$, 
$f(\mathbf{o})\overline{\frac{\partial g}{\partial z_{j}}}(\mathbf{o}) = 0$ for $j = 1, 2$. 
If $S_{2}(F_{t}) = \emptyset$, there exists $j$ such that 
$\frac{\partial f}{\partial z_{j}}(\mathbf{o})\overline{g}(\mathbf{o}) + 
t\frac{\partial h}{\partial z_{j}}(\mathbf{o}) \neq 0$. 
Thus the origin is not a singularity of $F_{t}(\zz)$. 
\end{proof}

Set $f(\zz) = a_{1}z_{1}^{pm} + a_{2}z_{2}^{qm} + z_{1}^{p}z_{2}^{q}f'(\zz)$ 
and $g(\zz) = b_{1}z_{1}^{pm} + b_{2}z_{2}^{qn} + z_{1}^{p}z_{2}^{q}g'(\zz)$, 
where $f'(\zz)$ and $g'(\zz)$ are weighted homogeneous complex polynomials. 

\begin{lemma}\label{l14}
Suppose that $\gamma_{j}$ is a coefficient of $h(\zz)$ which satisfies 
$\Re \frac{\overline{a_{j}}b_{j}}{\overline{\gamma_{j}}} > 0$ for $j = 1, 2$. 
Then $z_{1}$ and $z_{2}$ are non-zero for any $\ww = (z_{1}, z_{2}) \in S_{1}(F_{t})$. 
\end{lemma}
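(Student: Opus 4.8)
The plan is to suppose, for contradiction, that some $\ww = (z_1, z_2) \in S_1(F_t)$ has a vanishing coordinate, say $z_2 = 0$ (the case $z_1 = 0$ is symmetric, using the other index $j=1$), and derive a contradiction from the sign hypothesis $\Re(\overline{a_j}b_j/\overline{\gamma_j}) > 0$. First I would write out what it means for $\ww$ to lie in $S_1(F_t)$: by Proposition~1 there is $\alpha \in S^1$ with $\overline{\partial F_t/\partial z_k}(\ww) = \alpha\, \partial F_t/\partial \bar z_k(\ww)$ for $k = 1, 2$. Here $\partial F_t/\partial z_k = (\partial f/\partial z_k)\overline{g} + t\,\partial h/\partial z_k$ and $\partial F_t/\partial \bar z_k = f\,\overline{\partial g/\partial z_k}$. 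The idea is that at a point with $z_2 = 0$ the polynomials $f$, $g$, $h$ and all the relevant derivatives collapse to their pure-$z_1$ monomials: $f(z_1, 0) = a_1 z_1^{pm}$, $g(z_1,0) = b_1 z_1^{pn}$, $h(z_1,0) = \gamma_1 z_1^{p(m-n)}$, and similarly $\partial f/\partial z_1(z_1,0) = pm\,a_1 z_1^{pm-1}$, while $\partial f/\partial z_2(z_1,0)$, $\partial g/\partial z_2(z_1,0)$ involve the term $z_1^p z_2^{q-1}(\cdots)$ which vanishes (using convenience and $q \ge 1$), and $\partial h/\partial z_2(z_1,0) = 0$.

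Next I would note that $z_1 \ne 0$ at such a point: if also $z_1 = 0$ then $\ww = \mathbf o$, but $F_t$ restricted to $S_1$ excludes the origin by hypothesis (the statement concerns $\ww \in S_1(F_t)$, and Lemma~7 already places the only possible higher-corank singularity at $\mathbf o$; more directly, at $\mathbf o$ the singularity would be in $S_2$, not $S_1$, when $F_t$ is not regular there, and if it is regular there is nothing to prove). So $z_1 \ne 0$. Then the $k = 1$ component of the Proposition~1 relation becomes, after substituting the pure monomials,
\[
\overline{pm\,a_1 z_1^{pm-1}\,\overline{b_1 z_1^{pn}} + t\,p(m-n)\gamma_1 z_1^{p(m-n)-1}} = \alpha\, a_1 z_1^{pm}\,\overline{pn\,b_1 z_1^{pn-1}}.
\]
The $k = 2$ component reduces to $\overline{(\partial f/\partial z_2)(\ww)\,\overline{g}(\ww)} = \alpha\, f(\ww)\,\overline{(\partial g/\partial z_2)(\ww)}$; since $g(z_1,0) = b_1 z_1^{pn} \ne 0$ and $(\partial g/\partial z_2)(z_1,0) = 0$, this forces $(\partial f/\partial z_2)(\ww) = 0$, i.e. $\ww$ sits on the $z_1$-axis branch where $\partial f/\partial z_2$ vanishes — which is automatic there, so the $k=2$ equation carries no extra information; the content is all in $k = 1$.

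The heart of the argument is then to extract a contradiction from the $k=1$ equation. Using the Euler relation (equation (1)) to rewrite $pm\,a_1 z_1^{pm-1}\overline{b_1 z_1^{pn}}$ in terms of $f\,\overline{g}$ along the axis, and comparing moduli and arguments on both sides, one finds that the equation can hold only if a certain real combination of $\overline{a_1}b_1$ and $\overline{\gamma_1}$ has a definite sign — and the hypothesis $\Re(\overline{a_1}b_1/\overline{\gamma_1}) > 0$ is precisely what rules that out. Concretely, both sides are scalar multiples of $z_1^{\,p(m+n)-1}$ times quantities built from $a_1, b_1, \gamma_1$; dividing through and taking real parts isolates $\Re(\overline{a_1}b_1/\overline{\gamma_1})$, and positivity yields $0 < 0$ or forces $t \le 0$, contradicting $t > 0$. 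I expect the main obstacle to be bookkeeping: carefully checking that every term other than the pure $z_1$-monomials genuinely vanishes when $z_2 = 0$ (this is where convenience of $f$ and $g$ and the specific shape $h = \gamma_1 z_1^{p(m-n)} + \gamma_2 z_2^{q(m-n)}$ are used), and then massaging the single surviving equation into a form where the sign hypothesis bites cleanly. Once the reduction to the pure-axis equation is done correctly, the contradiction with $\Re(\overline{a_j}b_j/\overline{\gamma_j}) > 0$ is immediate, so the same computation with indices $1$ and $2$ swapped handles the case $z_1 = 0$, completing the proof.
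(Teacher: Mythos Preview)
Your overall strategy matches the paper's: assume a coordinate of $\ww\in S_1(F_t)$ vanishes, use Proposition~1 on the component corresponding to the \emph{nonvanishing} coordinate to obtain a single scalar equation in the pure axis monomials, and contradict the sign hypothesis. The paper argues with $z_1=0$ (hence the $j=2$ component), which is exactly the symmetric case of your setup.

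Where your sketch needs sharpening is the final contradiction. ``Dividing through and taking real parts'' alone does not force $t\le 0$. After dividing by $\overline{\gamma_1}\,\overline{z_1}^{\,p(m-n)-1}$ the equation reads
\[
m\,\frac{\overline{a_1}b_1}{\overline{\gamma_1}}\,\lvert z_1\rvert^{2pn}+t(m-n)
\;=\;
\alpha\, n\,\frac{a_1\overline{b_1}}{\overline{\gamma_1}}\,z_1^{\,pm}\,\overline{z_1}^{\,2pn-pm},
\]
and the right side carries a free phase (from $\alpha$ and from $z_1$), so its real part is not pinned down. The mechanism in the paper is a modulus comparison: the right-hand side has modulus $n\bigl|\overline{a_1}b_1/\overline{\gamma_1}\bigr|\lvert z_1\rvert^{2pn}$, while the first term on the left already has modulus $m\bigl|\overline{a_1}b_1/\overline{\gamma_1}\bigr|\lvert z_1\rvert^{2pn}$, strictly larger because of the standing assumption $m>n$. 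The hypothesis $\Re(\overline{a_1}b_1/\overline{\gamma_1})>0$ then guarantees that adding the positive real number $t(m-n)$ only increases the modulus further, so $\lvert\text{LHS}\rvert>\lvert\text{RHS}\rvert$ and no solution exists. Once you insert this modulus step (and explicitly invoke $m>n$), your plan goes through.

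A minor aside: your claim that $(\partial g/\partial z_2)(z_1,0)=0$ need not hold (e.g.\ when $q=1$), but the $k=2$ equation is not used in the argument anyway, so this does not affect the proof.
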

\begin{proof}
Assume that $\ww = (0, z_{2}) \in S_{1}(F_{t})$. 
By Proposition~$1$ and Lemma~$6$, 
$z_{2} \neq 0$ and 
\[
qm\overline{a_{2}}b_{2}z_{2}^{qn}\overline{z}_{2}^{qm-1} + tq(m - n)\overline{\gamma_{2}}\overline{z}_{2}^{q(m -n) - 1} = 
\alpha qna_{2}\overline{b_{2}}z_{2}^{qm}\overline{z}_{2}^{qn - 1}, 
\]
where $\alpha \in S^{1}$. Then we have 
\begin{equation}
m\frac{\overline{a_{2}}b_{2}}{\overline{\gamma_{2}}}z_{2}^{qn}\overline{z}_{2}^{qn} + t(m - n) = 
\alpha n\frac{a_{2}\overline{b_{2}}}{\overline{\gamma_{2}}}z_{2}^{qm}\overline{z}_{2}^{-qm + 2qn}. 
\end{equation}
Since $m$ is greater than $n$ and $\alpha \in S^{1}$, 
the absolute value of $m\frac{\overline{a_{2}}b_{2}}{\overline{\gamma_{2}}}z_{2}^{qn}\overline{z}_{2}^{qn}$ is greater than 
that of $\alpha n\frac{a_{2}\overline{b_{2}}}{\overline{\gamma_{2}}}z_{2}^{qm}\overline{z}_{2}^{-qm + 2qn}$. 
We take $\gamma_{2} \in \Bbb{C}$ which satisfies 
$\Re \frac{\overline{a_{2}}b_{2}}{\overline{\gamma_{2}}} > 0$. 
Then $z_{2}$ does not satisfy equation~$(2)$. This is a contradiction. 
Suppose that $\ww = (z_{1}, 0) \in S_{1}(F_{t})$. 
If we take $\gamma_{1} \in \Bbb{C}$ which satisfies 
$\Re \frac{\overline{a_{1}}b_{1}}{\overline{\gamma_{1}}} > 0$, 
the proof is analogous in case~$\ww = (0, z_{2})$. 
Thus we show that coefficients $\gamma_{1}$ and $\gamma_{2}$ of $h(\zz)$ such that 
$z_{1}$ and $z_{2}$ are non-zero for any $\ww = (z_{1}, z_{2}) \in~S_{1}(F_{t})$. 
\end{proof}

We now consider $h(\zz)$ satisfying the following condition: 
\begin{equation}\label{eq1}
\{ \det H(F_{t}) = 0 \} \cap S_{1}(F_{t}) = \emptyset. 
\end{equation}
Note that if $h(\zz)$ satisfies the condition $(3)$, 
the rank of the representation matrix of $d^{2}F_{t}$ is equal to~$3$ by Lemma~$3$.

\begin{lemma}\label{l13}
There exist coefficients $\gamma_{1}$ and $\gamma_{2}$ of $h(\zz)$ 
such that 
$\Re \frac{\overline{a_{j}}b_{j}}{\overline{\gamma_{j}}} > 0$, 
$h(\zz)$ satisfies the condition~$(3)$ and, 
on~$S_{1}(F_{t})$,  
\begin{equation*}
\begin{split}
F_{t}(\zz) = f(\zz)\overline{g}(\zz) + th(\zz) &\neq 0, \\
\end{split}
\end{equation*}
where $j = 1, 2$ and $0 < t <<1$. 
\end{lemma}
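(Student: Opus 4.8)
The plan is to choose $\gamma_1$ and $\gamma_2$ so that several ``bad'' algebraic conditions are avoided simultaneously, using a countability/genericity argument combined with the polar weighted homogeneity. First I would fix, as in Lemma~\ref{l14}, the half-plane constraints $\Re \frac{\overline{a_j}b_j}{\overline{\gamma_j}} > 0$ for $j=1,2$; these are open conditions on $(\gamma_1,\gamma_2)$ and guarantee, by Lemma~\ref{l14}, that every $\ww = (z_1,z_2) \in S_1(F_t)$ has both coordinates non-zero. I would then work on the set $S_1(F_t) \setminus \{\mathbf o\}$, which by Proposition~2 is a union of $S^1$-orbits of the polar action $s \circ \zz = (s^q z_1, s^p z_2)$; since both $\{\det H(F_t) = 0\}$ and $\{F_t = 0\}$ are also invariant under this $S^1$-action (being zero loci of polar weighted homogeneous functions), it suffices to exclude the two bad conditions on a set of representatives, e.g. after normalizing one coordinate.

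The key step is to treat the equations defining $S_1(F_t)$ — namely the proportionality relations of Proposition~1, which after using the Euler equality~(1) for $f$ and $g$ become polynomial relations in $z_1, z_2, \bar z_1, \bar z_2$ with the parameters $\gamma_1, \gamma_2, t$ entering — together with each of the two ``bad'' equations $\det H(F_t)=0$ and $F_t = 0$. For each bad equation I want to show that for all but finitely many (or a measure-zero set of) choices of $(\gamma_1,\gamma_2)$ in the prescribed open region, the bad locus meets $S_1(F_t)$ only at the origin. The mechanism: regard the relevant combined system as defining an algebraic subset of the $(\zz,\bar\zz,\gamma_1,\gamma_2)$-space; project to the $\gamma$-space; the fiber over a generic $\gamma$ is empty (away from $\mathbf o$) provided the projection is not dominant, and one checks non-dominance by exhibiting a single good value of $\gamma$ (or by a dimension count, using that the $z$-part is constrained to finitely many $S^1$-orbits once $\gamma$ is fixed generically, so adding one more equation forces emptiness). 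Lemma~5 and Lemma~4 are what make $\det H(f\overline g)$ (hence $\det H(F_t)$ for small $t$, by a perturbation/semicontinuity argument) not identically zero on the relevant strata, which is exactly the input needed to run this non-dominance argument for the first bad equation; the second bad equation $F_t = 0$ on $S_1(F_t)$ is handled similarly, noting that on $S_1(F_t)$ one can use~(1) to rewrite $f\overline g$ in terms of the partials and compare with $th$.

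After these exclusions, condition~(3) holds and $F_t \neq 0$ on $S_1(F_t)$, and Lemma~3 then upgrades this to: the representation matrix of $d^2 F_t$ has rank $2n-1 = 3$ along $S_1(F_t)$, as remarked after~(3). The main obstacle I anticipate is making the ``generic $\gamma$'' argument rigorous while respecting \emph{two} simultaneous open constraints on $\gamma$ and the requirement that $t$ be an independent small parameter: one must ensure the finitely-many-bad-$\gamma$ sets do not swallow the whole open half-plane region, which I would handle by checking that the bad sets are proper algebraic (hence measure zero) subsets — using that at, say, the limit $t \to 0$ the conditions degenerate to the statements of Lemmas~4 and~5, which are non-vacuous — and then choosing $\gamma$ in the (nonempty, open, full-measure) complement, and finally $t$ small enough depending on that $\gamma$. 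A secondary technical point is the uniformity in $t$: since $S_1(F_t)$ and $\det H(F_t)$ vary with $t$, I would phrase the exclusion so that the good choice of $\gamma$ works for all sufficiently small $t>0$, invoking that the relevant resultants/discriminants depend continuously (indeed polynomially) on $t$ and are non-zero at a reference value.
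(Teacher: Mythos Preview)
Your overall strategy---choose $(\gamma_1,\gamma_2)$ generically so that the bad loci $\{\det H(F_t)=0\}$ and $\{F_t=0\}$ miss $S_1(F_t)$---is the right shape, but the issue you flag as ``a secondary technical point'' (uniformity in $t$) is actually the heart of the matter, and your proposed fix does not work. At your only natural reference value $t=0$ one has $S_1(F_0)\setminus\{\mathbf o\}=\emptyset$ (since $f\overline g$ has an isolated singularity), so the conditions are vacuous and give no information; at any $t_0>0$ you would be assuming what you want. More seriously, even if for each fixed $t$ the bad $\gamma$-set is a proper subvariety, as $t$ ranges over $(0,\varepsilon)$ the union of these subvarieties can sweep out an open set in $\gamma$-space, so a pure genericity argument in $\gamma$ does not yield one $\gamma$ good for all small $t$. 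Your appeal to Lemmas~4 and~5 ``at $t\to 0$'' does not bridge this: those lemmas show $\det H(f\overline g)\not\equiv 0$, but say nothing about where its zero set sits relative to the moving set $S_1(F_t)$.

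The paper resolves this by \emph{eliminating $t$}. Introduce the parameter $\alpha\in S^1$ from Proposition~1 and work on $\Bbb C^2\times S^1$. The two singularity equations $\overline{\partial f/\partial z_j}\,g + t\,\overline{\partial h/\partial z_j}=\alpha f\,\overline{\partial g/\partial z_j}$ ($j=1,2$) combine, after cancelling $t$, into a single relation $\Phi(\zz,\alpha)=0$ which depends on $\gamma$ but not on $t$; this is the locus that contains $S_1(F_t)$ for every $t$. Combining the same two equations via the Euler weights gives equation~(4), from which $th$ (and hence $F_t$) is expressed on $S_1(F_t)$ as $F_t=\tfrac{-n}{m-n}\bigl(f\overline g-\overline\alpha\,\overline f g\bigr)$; so $F_t=0$ on $S_1(F_t)$ becomes the $t$-free locus $V_1$. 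Likewise equation~(5) rewrites $t\,\partial^2 h/\partial z_j^2$ on $S_1(F_t)$ in terms of $f,g,\alpha$, so $\det H(F_t)$ there equals a $t$-free polynomial $\Psi(\zz,\alpha)$ whose zero set is $V_2$; Lemmas~4 and~5 enter precisely to show $\Psi\not\equiv 0$. Now $V_1\cup V_2$ is a \emph{fixed} polar--radial weighted homogeneous set with finitely many branches, and $\{\Phi=0\}$ moves with $\gamma$, so one chooses $\gamma$ in the open half-planes so that $\{\Phi=0\}\cap(V_1\cup V_2)=\{\mathbf o\}$. This single choice of $\gamma$ then works for all small $t$ at once, which is exactly the step your outline was missing.
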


\begin{proof}
If $\ww$ is a singularity of $F_{t}(\zz)$, there exists $\alpha \in S^{1}$ such that 
\begin{equation*}
\begin{split}
\overline{\frac{\partial f}{\partial z_{1}}}(\ww)g(\ww) + t\overline{\frac{\partial h}{\partial z_{1}}}(\ww) = 
\alpha f(\ww)\overline{\frac{\partial g}{\partial z_{1}}}(\ww), \\
\overline{\frac{\partial f}{\partial z_{2}}}(\ww)g(\ww) + t\overline{\frac{\partial h}{\partial z_{2}}}(\ww) = 
\alpha f(\ww)\overline{\frac{\partial g}{\partial z_{2}}}(\ww). 
\end{split}
\end{equation*}
So the above equations lead to the following equation:  
\begin{equation*}
\begin{split}
\Phi(\zz, \alpha) :&= \biggl(\overline{\frac{\partial f}{\partial z_{1}}\frac{\partial h}{\partial z_{2}} - \frac{\partial f}{\partial z_{2}}\frac{\partial h}{\partial z_{1}}}\biggr)g(\zz) - 
\alpha \biggl(\overline{\frac{\partial g}{\partial z_{1}}\frac{\partial h}{\partial z_{2}} - \frac{\partial g}{\partial z_{2}}\frac{\partial h}{\partial z_{1}}}\biggr)f(\zz) \\ 
&= \biggl(\overline{\frac{\partial f}{\partial z_{1}}}(\ww)g(\ww) - 
\alpha f(\ww)\overline{\frac{\partial g}{\partial z_{1}}}(\ww)\biggr)\overline{\frac{\partial h}{\partial z_{2}}}(\ww) \\  
&- \biggl(\overline{\frac{\partial f}{\partial z_{2}}}(\ww)g(\ww)    
- f(\ww)\overline{\frac{\partial g}{\partial z_{2}}}(\ww)\biggr)\overline{\frac{\partial h}{\partial z_{1}}}(\ww) \\   
&= 0. 
\end{split}
\end{equation*}

By using equation $(1)$ and Proposition $1$, on $S_{1}(F_{t})$, we have 
\begin{align}
pqm\overline{f}(\zz)g(\zz) + pq(m - n)t\overline{h}(\zz) &= \alpha pqnf(\zz)\overline{g}(\zz), \\
t\frac{\partial^{2} h}{\partial z_{j} \partial z_{j}} 
&= t\frac{pq(m-n) - p_{j}}{p_{j}z_{j}}\frac{\partial h}{\partial z_{j}}  \\
&= \frac{pq(m-n) - p_{j}}{p_{j}z_{j}}\Bigr( \overline{\alpha}\overline{f}\frac{\partial g}{\partial z_{j}} - \frac{\partial f}{\partial z_{j}}\overline{g}  \Bigr), \notag
\end{align}
where $\alpha \in S^{1}$, $p_{1} = q, p_{2} = p$ and $j = 1, 2$. 
By equation $(4)$, $F_{t}(\zz)$ satisfies 
\begin{equation*}
\begin{split}
F_{t}(\zz) &= f(\zz)\overline{g}(\zz) + th(\zz) \\
&= f(\zz)\overline{g}(\zz) + \frac{1}{pq(m - n)}\Bigl( -pqmf(\zz)\overline{g}(\zz) + \overline{\alpha}pqn\overline{f}(\zz)g(\zz) \Bigr) \\
&= f(\zz)\overline{g}(\zz) - \frac{m}{m - n}f(\zz)\overline{g}(\zz) + \frac{n}{m - n}\overline{\alpha}\overline{f}(\zz)g(\zz) \\
&= \frac{-n}{m - n}\Bigl(f(\zz)\overline{g}(\zz) - \overline{\alpha}\overline{f}(\zz)g(\zz)\Bigr). 
\end{split}
\end{equation*}
So $F_{t}(\zz)$ vanishes on $S_{1}(F_{t})$ if and only if 
$f(\zz)\overline{g}(\zz) - \overline{\alpha}\overline{f}(\zz)g(\zz)$ is equal to $0$. 
By equation~$(5)$, the Hessian $H(F_{t})$ of~$F_{t}(\zz)$ is equal to 
\[    \left(
        \begin{array}{@{\,}cccccccc@{\,}}
         \omega_{1} 
         & \frac{\partial^{2}f}{\partial z_{2}\partial z_{1}}\overline{g} 
         & \frac{\partial f}{\partial z_{1}}\overline{\frac{\partial g}{\partial z_{1}}} 
         & \frac{\partial f}{\partial z_{1}}\overline{\frac{\partial g}{\partial z_{2}}} \\ 
         \frac{\partial^{2}f}{\partial z_{1}\partial z_{2}}\overline{g}  
         & \omega_{2} 
         & \frac{\partial f}{\partial z_{2}}\overline{\frac{\partial g}{\partial z_{1}}} 
         & \frac{\partial f}{\partial z_{2}}\overline{\frac{\partial g}{\partial z_{2}}} \\ 
         \frac{\partial f}{\partial z_{1}}\overline{\frac{\partial g}{\partial z_{1}}} 
         & \frac{\partial f}{\partial z_{2}}\overline{\frac{\partial g}{\partial z_{1}}} 
         & f\overline{\frac{\partial^{2}g}{\partial z_{1}\partial z_{1}}} 
         & f\overline{\frac{\partial^{2}g}{\partial z_{2}\partial z_{1}}} \\ 
         \frac{\partial f}{\partial z_{1}}\overline{\frac{\partial g}{\partial z_{2}}} 
         & \frac{\partial f}{\partial z_{2}}\overline{\frac{\partial g}{\partial z_{2}}} 
         & f\overline{\frac{\partial^{2}g}{\partial z_{1}\partial z_{2}}} 
         & f\overline{\frac{\partial^{2}g}{\partial z_{2}\partial z_{2}}}  
        \end{array}
       \right)  \]
at $\ww \in S_{1}(F_{t})$, 
where $\omega_{j} = \frac{\partial^{2}f}{\partial z_{j}\partial z_{j}}\overline{g} + 
\frac{pq(m-n) - p_{j}}{p_{j}z_{j}}\Bigr( \overline{\alpha}\overline{f}\frac{\partial g}{\partial z_{j}} - \frac{\partial f}{\partial z_{j}}\overline{g}  \Bigr)$ for $j = 1, 2$. 
Let $\Psi(\zz, \alpha)$ be the determinant of the above matrix. 
By Lemma $4, 5$, either 
the coefficient of $\overline{\alpha}^{2}$ of $\Psi(\zz, \alpha)$ or $\det H(f\overline{g})$ is non-zero. 
So $\Psi(\zz, \alpha)$ is not identically equal to $0$. 
We define the~$S^1$-action and the $\Bbb{R}^{+}$-action on $\Bbb{C}^{2} \times S^{1}$ as follows: 
\begin{equation*}
\begin{split}
s \circ (z_{1}, z_{2}, \alpha) &:= (s^{q}z_{1}, s^{p}z_{2}, s^{-2d_{f} + 2d_{g}}\alpha), \\
r \circ (z_{1}, z_{2}, \alpha) &:= (r^{q}z_{1}, r^{p}z_{2}, \alpha), 
\end{split}
\end{equation*}
where $s \in S^{1}, r \in \Bbb{R}^{+}$. 
Then $f(\zz)\overline{g}(\zz) - \overline{\alpha}\overline{f}(\zz)g(\zz)$, 
$\Phi(\zz, \alpha)$ and $\Psi(\zz, \alpha)$ are polar and radial weighted homogeneous mixed polynomials. 
Set $V_{1} = \{ (z_{1}, z_{2}, \alpha) \in \Bbb{C}^{2} \times S^{1} \mid 
f(\zz)\overline{g}(\zz) - \overline{\alpha}\overline{f}(\zz)g(\zz) = 0\}$ and  
$V_{2} = \{ (z_{1}, z_{2}, \alpha) \in \Bbb{C}^{2} \times S^{1} \mid \Psi(\zz, \alpha) = 0 \}$. 
Since the dimension of the~algebraic set $V_{j}$ 
is $3$, 
the dimension of the orbit space of~$V_{j}$ 
under the $S^1$-action and the $\Bbb{R}^{+}$-action 
is $1$, for $j = 1, 2$. 
Then the curves $V_1$ and $V_2$ 
have finitely many branches which depend only $f(\zz)$ and $g(\zz)$. 

We take a coefficient $\gamma_{j}$ of $h(\zz)$ which satisfies 
$\Re \frac{\overline{a_{j}}b_{j}}{\overline{\gamma_{j}}} > 0$ for $j = 1, 2$. 
Assume that $\ww = (z_{1}, z_{2}) \in~S_{1}(F_{t})$ satisfies 
\begin{equation*}
\begin{split}
\overline{\frac{\partial f}{\partial z_{1}}}(\ww)g(\ww)\overline{\frac{\partial h}{\partial z_{2}}}(\ww) &=  
f(\ww)\overline{\frac{\partial g}{\partial z_{1}}}(\ww)\overline{\frac{\partial h}{\partial z_{2}}}(\ww) = 0,   \\
\overline{\frac{\partial f}{\partial z_{2}}}(\ww)g(\ww)\overline{\frac{\partial h}{\partial z_{1}}}(\ww) &=  
f(\ww)\overline{\frac{\partial g}{\partial z_{2}}}(\ww)\overline{\frac{\partial h}{\partial z_{1}}}(\ww) = 0.   
\end{split}
\end{equation*}
If $\ww$ satisfies $\overline{\frac{\partial f}{\partial z_{j}}}(\ww)g(\ww) = 
f(\ww)\overline{\frac{\partial g}{\partial z_{j}}}(\ww) = 0$, 
by Proposition $1$, 
$\frac{\partial h}{\partial z_{j}}(\ww) = 0$, where $j = 1$ or $j = 2$.  
Since $h(\zz)$ is $\gamma_{1} z_{1}^{p(m-n)} + \gamma_{2} z_{2}^{q(m-n)}$, either $z_{1}$ or $z_{2}$ is $0$. 
By Lemma \ref{l14}, this is a~contradiction. 
Hence $\Phi(\zz, \alpha)$ 
is non-zero on~$S_{1}(F_{t})$. 

Since the curves $V_1$ and $V_2$ 
have finitely many branches, 
we can choose coefficients $\gamma_{1}$ and $\gamma_{2}$ of $h(\zz)$ such that 
the intersection of 
$(V_{1} \cup V_{2})$ 
and $\{ \Phi(\zz, \alpha) = 0 \}$ is only the origin and 
$\Re \frac{\overline{a_{j}}b_{j}}{\overline{\gamma_{j}}} > 0$ for $j = 1, 2$. 
By Lemma~$6$, the origin does not belong to $S_{1}(F_{t})$. 
Thus a~deformation $F_{t}(\zz)$ of $f(\zz)\overline{g}(\zz)$ satisfies the~condition $(3)$ and 
$F_{t}(\zz) \neq 0$ on $S_{1}(F_{t})$. 
\end{proof}

\begin{lemma}\label{l8}
Let $F_{t}(\zz)$ be a deformation of $f(\zz)\overline{g}(\zz)$ in Lemma \ref{l13}. 
$S_{1}(F_{t})$ are indefinite fold singularities. 
\end{lemma}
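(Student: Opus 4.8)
The plan is to verify, for each $\ww\in S_{1}(F_{t})$, the two conditions from Section~$2.1$ that make $\ww$ a fold singularity of $F_{t}$ — that the representation matrix of $d^{2}F_{t}$ has rank $3$ and that $d^{2}F_{t,\ww}$ restricted to $L_{\ww}=\ker dF_{t,\ww}$ has zero kernel — and then to show that the resulting non-degenerate quadratic form on the $3$-dimensional space $L_{\ww}$ is not definite, so that $\ww$ is an indefinite fold. First I record what Lemma~\ref{l13} provides on $S_{1}(F_{t})$: there $\det H(F_{t})\neq0$ (condition~$(3)$) and $F_{t}(\zz)\neq0$. From the identity $F_{t}(\zz)=\frac{-n}{m-n}\bigl(f(\zz)\overline{g}(\zz)-\overline{\alpha}\,\overline{f}(\zz)g(\zz)\bigr)$ valid on $S_{1}(F_{t})$ (obtained in the proof of Lemma~\ref{l13}), $F_{t}(\ww)\neq0$ forces $f(\ww)\neq0$ and $g(\ww)\neq0$; then the Euler equality~$(1)$ shows that the gradients $\bigl(\frac{\partial f}{\partial z_{1}},\frac{\partial f}{\partial z_{2}}\bigr)(\ww)$ and $\bigl(\frac{\partial g}{\partial z_{1}},\frac{\partial g}{\partial z_{2}}\bigr)(\ww)$ are also nonzero, and $z_{1}z_{2}\neq0$ by Lemma~\ref{l14}.

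\emph{$\ww$ is a fold.} Since the rank of $H(F_{t})$ is $2n=4$, Lemma~$3$ gives that the representation matrix of $d^{2}F_{t}$ has rank $3$ at $\ww$; hence $j^{1}F_{t}$ is transversal to $S_{1}(\Bbb{C}^{2},\Bbb{C})$ near $\ww$, the set $S_{1}(F_{t})$ is a smooth $1$-dimensional submanifold there, and its tangent line at $\ww$ equals $\ker\bigl(d^{2}F_{t,\ww}\colon T_{\ww}\Bbb{C}^{2}\to L_{\ww}^{*}\otimes G_{\ww}\bigr)$ (this description of $T_{\ww}S_{1}(F_{t})$ is standard; see \cite{L1}). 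By Proposition~$2$ the $S^{1}$-orbit of $\ww$ lies in $S_{1}(F_{t})$; since $z_{1}z_{2}\neq0$ this orbit is $1$-dimensional, so near $\ww$ it coincides with $S_{1}(F_{t})$ and $T_{\ww}S_{1}(F_{t})$ is spanned by the vector $v_{\ww}$ corresponding to $i(qz_{1},pz_{2})\in\Bbb{C}^{2}$. Differentiating $F_{t}(s\circ\ww)=s^{pq(m-n)}F_{t}(\ww)$ at $s=1$ gives $dF_{t,\ww}(v_{\ww})=i\,pq(m-n)F_{t}(\ww)\neq0$, because $m>n$ and $F_{t}(\ww)\neq0$; so $v_{\ww}\notin L_{\ww}$. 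As $T_{\ww}S_{1}(F_{t})=\ker d^{2}F_{t,\ww}$ is $1$-dimensional and a vector of $L_{\ww}$ lies in the radical of $d^{2}F_{t,\ww}|_{L_{\ww}}$ exactly when it lies in $\ker d^{2}F_{t,\ww}$, the form $d^{2}F_{t,\ww}|_{L_{\ww}}$ is non-degenerate, so $\ww$ is a fold.

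\emph{The fold is indefinite.} Let $B=d^{2}F_{t,\ww}|_{L_{\ww}}$, a non-degenerate symmetric form on $L_{\ww}\cong\Bbb{R}^{3}$ valued in $G_{\ww}\cong\Bbb{R}$; taking the cokernel functional $w\mapsto\Re(\mu w)$ with $\mu\in\Bbb{C}^{*}$ annihilating the image of $dF_{t,\ww}$, the point $\ww$ is a critical point of $\Re(\mu F_{t})$ and $B$ is the restriction to $L_{\ww}$ of the ordinary Hessian of $\Re(\mu F_{t})$ at $\ww$. Every real $3$-plane in $\Bbb{C}^{2}$ contains a unique complex line; here, by Proposition~$1$ (which on $S_{1}(F_{t})$ reads $\overline{\frac{\partial F_{t}}{\partial z_{j}}}(\ww)=\alpha\,f(\ww)\,\overline{\frac{\partial g}{\partial z_{j}}}(\ww)$) and $f(\ww)\neq0$, that complex line is $\ell=\bigl\{\,v\in\Bbb{C}^{2}:\frac{\partial g}{\partial z_{1}}(\ww)v_{1}+\frac{\partial g}{\partial z_{2}}(\ww)v_{2}=0\,\bigr\}$, the tangent line at $\ww$ to $\{g=g(\ww)\}$. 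Parametrising $\zz=\ww+\tau v$ for a generator $v$ of $\ell$ and writing $\kappa_{f}=\sum_{j,k}\frac{\partial^{2}f}{\partial z_{j}\partial z_{k}}(\ww)v_{j}v_{k}$, and likewise $\kappa_{g},\kappa_{h}$, one checks that the $\overline{\tau}$- and $\tau\overline{\tau}$-coefficients of $F_{t}(\ww+\tau v)$ vanish because $\sum_{k}\frac{\partial g}{\partial z_{k}}(\ww)v_{k}=0$, while the $\tau$-coefficient vanishes because $v\in L_{\ww}$, so that
\[
F_{t}(\ww+\tau v)=F_{t}(\ww)+\tfrac12\bigl(\kappa_{f}\,\overline{g}(\ww)+t\kappa_{h}\bigr)\tau^{2}+\tfrac12\,f(\ww)\,\overline{\kappa_{g}}\;\overline{\tau}^{2}+O(|\tau|^{3}).
\]
Hence $\Re(\mu F_{t})(\ww+\tau v)=\mathrm{const}+\tfrac12\Re(W\tau^{2})+O(|\tau|^{3})$ with $W=\mu\bigl(\kappa_{f}\,\overline{g}(\ww)+t\kappa_{h}\bigr)+\overline{\mu}\,\overline{f(\ww)}\,\kappa_{g}$, and in the basis $\{v,iv\}$ the form $B|_{\ell}$ is represented by the traceless symmetric matrix $\left(\begin{smallmatrix}\Re W&-\Im W\\ -\Im W&-\Re W\end{smallmatrix}\right)$; so $B|_{\ell}$ is either $0$ or of signature $(1,1)$. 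The case $B|_{\ell}=0$ is impossible, since a non-degenerate symmetric form on $\Bbb{R}^{3}$ admits no totally isotropic $2$-plane. Therefore $B|_{\ell}$ has signature $(1,1)$, so the non-degenerate form $B$ on $\Bbb{R}^{3}$ is indefinite on a $2$-plane, hence has signature $(2,1)$ or $(1,2)$, and $\ww$ is an indefinite fold.

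The step I expect to require the most care is the second-order expansion along $\ell$ and the identification of $\ell$: this is exactly where the mixed, non-holomorphic nature of $F_{t}=f\overline{g}+th$ is used — the antiholomorphic factor $\overline{g}$ contributes the $\overline{\tau}^{2}$-term $\tfrac12\,f(\ww)\,\overline{\kappa_{g}}$ with the opposite orientation from the holomorphic $\tau^{2}$-term, which is what makes $\Re(W\tau^{2})$ indefinite and rules out a definite fold. The verifications that $v\in L_{\ww}$ kills the lower-order terms, and that $H(F_{t})$ is non-degenerate on $S_{1}(F_{t})$ by condition~$(3)$ (Lemma~\ref{l13}), are routine supporting pieces; the genuinely new input over Lemma~\ref{l13} is the interplay between the $S^{1}$-action (Proposition~$2$), which forces the singularity to be a fold, and this antiholomorphic second-order term, which forces the fold to be indefinite.
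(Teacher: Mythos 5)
Your proof is correct, and while the first half (the fold condition) follows the paper's route, the second half (indefiniteness) is a genuinely different argument. For the fold part you and the paper do the same thing: condition~$(3)$ plus Lemma~$3$ gives rank $3$ for the representation matrix of $d^{2}F_{t}$, and the nonvanishing of $dF_{t}$ along the $S^{1}$-orbit direction, $dF_{t,\ww}(v_{\ww})=i\,pq(m-n)F_{t}(\ww)\neq0$, is exactly the paper's computation $\frac{dF_{t}}{d\theta}=ipq(m-n)e^{ipq(m-n)\theta}F_{t}(\zz)\neq0$; you merely phrase it as $v_{\ww}\notin L_{\ww}$, hence $\ker\bigl(d^{2}F_{t,\ww}|_{L_{\ww}}\bigr)=L_{\ww}\cap T_{\ww}S_{1}(F_{t})=0$. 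For indefiniteness the paper argues globally and by contradiction: if a component $S$ consisted of definite folds, $\lvert F_{t}\rvert^{2}$ (constant on $S$ by polar homogeneity) would be a one-sided local extremum along $S$, while the equivariance $\frac{\partial\lvert F_{t}(s\circ\zz)\rvert^{2}}{\partial z_{1}}=\overline{s}^{q}\frac{\partial\lvert F_{t}(\zz)\rvert^{2}}{\partial z_{1}}$ forces $\Re\frac{\partial\lvert F_{t}\rvert^{2}}{\partial z_{1}}$ to change sign along the orbit, producing a descending curve from $S$. You instead argue locally and pointwise: restricting the intrinsic quadratic form $B=d^{2}F_{t,\ww}|_{L_{\ww}}$ to the unique complex line $\ell\subset L_{\ww}$ (the tangent to $\{g=g(\ww)\}$), the holomorphic Taylor expansions of $f,g,h$ give a quadratic part of the form $\tfrac12\Re(W\tau^{2})$ with no $\tau\overline{\tau}$-term, so $B|_{\ell}$ is traceless, and since a non-degenerate form on $\Bbb{R}^{3}$ admits no totally isotropic $2$-plane, $B|_{\ell}$ has signature $(1,1)$ and $B$ is indefinite. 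I checked the key identities: $\ell\subset L_{\ww}$ follows from $\frac{\partial F_{t}}{\partial\bar z_{j}}=f\,\overline{\partial g/\partial z_{j}}$ together with Proposition~$1$, and the vanishing of the $\tau$-, $\overline{\tau}$- and $\tau\overline{\tau}$-coefficients is as you say; $f(\ww),g(\ww)\neq0$ do follow from $F_{t}(\ww)\neq0$ via the identity $F_{t}=\frac{-n}{m-n}(f\overline{g}-\overline{\alpha}\overline{f}g)$ on $S_{1}(F_{t})$. Your version buys a cleaner, purely second-order explanation of \emph{why} the fold must be indefinite — the antiholomorphic factor $\overline{g}$ contributes a $\overline{\tau}^{2}$-term that makes the restricted Hessian traceless — and it avoids the paper's slightly delicate monotone-curve construction; the paper's version buys economy, reusing the same polar-homogeneity machinery already set up for Lemma~\ref{l13}.
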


\begin{proof}
By Proposition $2$, $S_{1}(F_{t})$ is a union of the orbits of the $S^{1}$-action. 
So a connected component of $S_{1}(F_{t})$ can be represented by 
\[
(e^{iq\theta}z_{1}, e^{ip\theta}z_{2}), \ \ \theta \in [0, 2\pi]. 
\]
We first show that the differential of $F_{t}|_{S_{1}(F_{t})} : S_{1}(F_{t}) \rightarrow \Bbb{R}^{2}$ is non-zero. 
On a connected component of $S_{1}(F_{t})$, the map $F_{t}$ has the following form: 
\[
F_{t}(e^{iq\theta}z_{1}, e^{ip\theta}z_{2}) = e^{ipq(m-n)\theta}F_{t}(z_{1}, z_{2}). 
\]
Thus the differential of $F_{t}$ satisfies  
\[
\frac{d F_{t}}{d \theta}(e^{iq\theta}z_{1}, e^{ip\theta}z_{2}) = ipq(m-n)e^{ipq(m-n)\theta}F_{t}(z_{1}, z_{2}). 
\]
Since $F_{t}(\zz)$ does not vanish on $S_{1}(F_{t})$, 
the differential does not vanish on $S_{1}(F_{t})$. 
Thus any point of~$S_{1}(F_{t})$ is a fold singularity.

Next we calculate the differential of $\lvert F_{t} \rvert^{2}$. 
Let $S$ be a connected component of $S_{1}(F_{t})$ and 
$W$ be a sufficiently small neighborhood of $S$ in $\Bbb{R}^{4}$. 
Assume that $S$ is the set of definite fold singularities. 
Since $\lvert F_{t} \rvert^{2}$ is constant on $S$, 
$\lvert F_{t}(\ww) \rvert^{2}$ is the maximal value or the minimum value for any $\ww \in S$. 
Suppose that $\lvert F_{t}(\ww) \rvert^{2}$ is the minimum value for any $\ww \in S$. 
Since $S$ is a connected component of $S_{1}(F_{t})$, 
the differential 
$\frac{\partial F_{t}}{\partial z_{1}}$ or $\frac{\partial F_{t}}{\partial z_{2}}$ does not vanish on $S$. 
Assume that $\frac{\partial F_{t}}{\partial z_{1}} \neq 0$ on $W$. 
Let $\zz(u) = (z_{1} + u, z_{2})$ be a curve on $\Bbb{C}^{2}$, where $(z_{1}, z_{2}) \in S$ and $0 \leq u << 1$. 
By the definition of $F_{t}(\zz) = f(\zz)\overline{g}(\zz) + th(\zz)$, 
$\frac{\partial F_{t}}{\partial \overline{z}_{1}} = \overline{\frac{\partial \overline{F_{t}}}{\partial z_{1}}}$ and 
$\frac{\partial \overline{F_{t}}}{\partial \overline{z}_{1}} = \overline{\frac{\partial F_{t}}{\partial z_{1}}}$. 
Then we have 
\begin{equation*}
\begin{split}
\frac{\partial \lvert F_{t} \rvert^{2}}{\partial \overline{z}_{1}} &= 
\frac{\partial F_{t}}{\partial \overline{z}_{1}}\overline{F_{t}} + F_{t}\frac{\partial \overline{F_{t}}}{\partial \overline{z}_{1}} \\
&= \overline{\frac{\partial \overline{F_{t}}}{\partial z_{1}}}\overline{F_{t}} + F_{t}\overline{\frac{\partial F_{t}}{\partial z_{1}}} \\
&= \overline{\frac{\partial \lvert F_{t} \rvert^{2}}{\partial z_{1}}}. 
\end{split}
\end{equation*}
Thus the differential of $\lvert F_{t}(\zz(u)) \rvert^{2}$ satisfies 
\begin{equation*}
\begin{split}
\frac{d \lvert F_{t}(\zz(u)) \rvert^{2}}{d u} 
&= \frac{\partial \lvert F_{t} \rvert^{2}}{\partial z_{1}}\frac{d z_{1}}{d u} + 
\frac{\partial \lvert F_{t} \rvert^{2}}{\partial \overline{z}_{1}}\frac{d \overline{z}_{1}}{d u} \\
&= \frac{\partial \lvert F_{t} \rvert^{2}}{\partial z_{1}}\frac{d z_{1}}{d u} + 
\overline{\frac{\partial \lvert F_{t} \rvert^{2}}{\partial z_{1}}}\frac{d \overline{z}_{1}}{d u} \\
&= 2\Re \frac{\partial \lvert F_{t} \rvert^{2}}{\partial z_{1}} \geq 0. \\
\end{split}
\end{equation*}
So there exists a neighborhood of $(z_{1}, z_{2})$ such that the real part of 
$\frac{\partial \lvert F_{t}(\zz) \rvert^{2}}{\partial z_{1}}$ is non-negative. 
Since $\frac{\partial \lvert F_{t}(\zz) \rvert^{2}}{\partial z_{1}}$ is a polar weighted homogeneous mixed polynomial, 
$\frac{\partial \lvert F_{t}(s \circ \zz) \rvert^{2}}{\partial z_{1}} = 
\overline{s}^{q}\frac{\partial \lvert F_{t}(\zz) \rvert^{2}}{\partial z_{1}}$. 
We take the complex number $s$ such that $\lvert s \rvert = 1$, on a neighborhood of $s \circ (z_{1}, z_{2})$, and 
\[
2\Re \frac{\partial \lvert F_{t} \rvert^{2}}{\partial z_{1}} \leq 0. 
\]
Then there exists a curve $\zz'(u)$ such that $\zz'(0) = s \circ (z_{1}, z_{2}) \in S$ and 
$\lvert F_{t} \rvert^{2}$ is monotone decreasing on~$\zz'(u)$. 
This is a contradiction. 
If $\frac{\partial F_{t}}{\partial z_{2}}$ does not vanish on $W$, 
the proof is analogous in case~$\frac{\partial F_{t}}{\partial z_{1}} \neq 0$. 
If $\lvert F_{t}(\ww) \rvert^{2}$ is the maximal value, 
we can calculate the~differential of $\lvert F_{t} \rvert^{2}$ by using the same method 
of the above case. 
Thus we show that $S_{1}(F_{t})$ is the set of indefinite fold singularities. 
\end{proof}

\subsection{Case $(2)$.}
In this subsection, $g(\zz)$ is equal to $\beta_{1} z_{1} + \beta_{2} z_{2}$. 
Since we study $f(\zz)\overline{g}(\zz)$, we may assume that $g(\zz) = z_{1} + \beta z_{2}$. 
We study the following deformation $F_{t}(\zz)$ of $f(\zz)\overline{(z_{1} + \beta z_{2})}$: 
\[
F_{t}(\zz) := f(\zz)\overline{(z_{1} + \beta z_{2})} + th(\zz), 
\]
where $h(\zz) = z_{1}^{m}\overline{z_{1}} + z_{1}^{m-1} + \gamma z_{2}^{m-1}$. 
We study the rank of $H(F_{t})$ and the differential of $F_{t}|_{S_{1}(F_{t})}$. 
\begin{lemma}\label{l9}
There exists a coefficient $\gamma$ such that 
the singularities of $F_{t}(\zz)$ are indefinite fold 
singularities except for the origin.  
\end{lemma}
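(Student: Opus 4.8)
The plan is to imitate the proof of Case $(1)$, adapting it to the linear $g(\zz)=z_{1}+\beta z_{2}$ and to the particular $h(\zz)=z_{1}^{m}\overline{z_{1}}+z_{1}^{m-1}+\gamma z_{2}^{m-1}$. Note first that, since $g(\zz)$ is weighted homogeneous and linear, the weights satisfy $p=q$ and $n=1$, so $f(\zz)\overline{g}(\zz)$ has polar degree $pq(m-1)$, and each monomial of $h(\zz)$ also has polar degree $pq(m-1)$; hence $F_{t}(s\circ\zz)=s^{pq(m-1)}F_{t}(\zz)$ for $|s|=1$ and $F_{t}(\zz)$ is a polar weighted homogeneous mixed polynomial of positive polar degree. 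By Proposition $2$, $S_{1}(F_{t})$ and $S_{2}(F_{t})$ are unions of $S^{1}$-orbits. Arguing as in Lemma $6$ --- using the Euler equality $(1)$ for $f$ and the polar Euler identity for $h$ to force $f(\zz)\overline{g}(\ww)=0$ and $th(\ww)=0$ at a point $\ww\in S_{2}(F_{t})$, together with the fact that $\gamma$ can be chosen so that $\{f\overline{g}=0\}\cap\{h=0\}$ reduces to $\{\mathbf o\}$ near the origin --- one obtains $S_{2}(F_{t})\subset\{\mathbf o\}$; so on $U\setminus\{\mathbf o\}$ every singularity of $F_{t}$ lies in $S_{1}(F_{t})$ and the rank of $dF_{t}$ equals $1$ there.

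The core of the proof is the choice of $\gamma$ ensuring that $\det H(F_{t})$ does not vanish on $S_{1}(F_{t})$, i.e. that condition $(3)$ holds, and that $F_{t}\neq 0$ on $S_{1}(F_{t})$. The reason the term $z_{1}^{m}\overline{z_{1}}$ appears is that $g$ being linear makes all second derivatives of $g$, and hence $\det H(f\overline{g})$, vanish identically; the monomial $z_{1}^{m}\overline{z_{1}}$ repairs this by contributing the non-trivial entry $\partial^{2}h/\partial z_{1}\partial\overline{z}_{1}=m z_{1}^{m-1}$ to the mixed Hessian. Following Lemma~\ref{l13}, I would first establish an analogue of Lemma~\ref{l14}, choosing $\gamma$ so that $z_{1},z_{2}\neq 0$ on $S_{1}(F_{t})$ (needed since $m z_{1}^{m-1}$ degenerates at $z_{1}=0$); then, on $S_{1}(F_{t})$, use Proposition $1$ and the Euler identities to rewrite the diagonal entries $\partial^{2}F_{t}/\partial z_{j}^{2}$ of $H(F_{t})$ in terms of lower-order data, and compute the determinant $\Psi(\zz,\alpha)$ of the resulting matrix as a function on $\{(\zz,\alpha)\in\Bbb{C}^{2}\times S^{1}\mid \ww\in S_{1}(F_{t})\}$. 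Inspecting the monomial of $\Psi(\zz,\alpha)$ carrying the factor $m z_{1}^{m-1}$ and using that $f$ is convenient with an isolated singularity, so that Lemma $4$ gives $\det H_{\Bbb{C}}(f)\not\equiv 0$, shows that $\Psi(\zz,\alpha)$ is not identically zero. As in Lemma~\ref{l13}, one also computes that $F_{t}$ restricted to $S_{1}(F_{t})$ is a non-zero constant multiple of an $S^{1}$- and $\Bbb{R}^{+}$-invariant polynomial in $(\zz,\alpha)$. Since $\{\Psi=0\}$, $\{F_{t}|_{S_{1}(F_{t})}=0\}$ and the locus $\{\Phi(\zz,\alpha)=0\}$ cut out from the singular condition all have $1$-dimensional orbit spaces, they have finitely many branches depending only on $f$ and $\beta$, and $\gamma$ can be chosen --- still respecting the sign conditions from the analogue of Lemma~\ref{l14} --- so that their common intersection with $\{\Phi=0\}$ is only the origin. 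For such $\gamma$, condition $(3)$ holds and $F_{t}\neq 0$ on $S_{1}(F_{t})$.

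It then remains to conclude exactly as in Lemma~\ref{l8}. By Lemma $3$ and condition $(3)$ the representation matrix of $d^{2}F_{t}$ has rank $3$ along $S_{1}(F_{t})$; since $F_{t}(\zz)$ is polar weighted homogeneous of positive polar degree and does not vanish on $S_{1}(F_{t})$, on each orbit component $S$ of $S_{1}(F_{t})$ we have $F_{t}(e^{iq\theta}z_{1},e^{ip\theta}z_{2})=e^{ipq(m-1)\theta}F_{t}(z_{1},z_{2})$, so $d(F_{t}|_{S_{1}(F_{t})})\neq 0$ and every point of $S_{1}(F_{t})$ is a fold singularity rather than a cusp. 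To see the folds are indefinite, suppose some orbit component $S$ consisted of definite folds; then $|F_{t}|^{2}$ is constant on $S$ and is a local maximum or minimum of $|F_{t}|^{2}$ on a tubular neighborhood of $S$. But $\partial|F_{t}|^{2}/\partial z_{1}$ is again polar weighted homogeneous with the non-trivial character $\overline{s}^{q}$, so $\Re\,\partial|F_{t}|^{2}/\partial z_{1}$ cannot keep a single sign along the $S^{1}$-orbit through a point of $S$; as in Lemma~\ref{l8}, this produces a curve near $S$ along which $|F_{t}|^{2}$ is strictly monotone, contradicting definiteness. Hence every point of $S_{1}(F_{t})$ is an indefinite fold, which proves the lemma. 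The hard part will be the middle paragraph: identifying an explicit surviving monomial of $\det H(F_{t})$ on $S_{1}(F_{t})$ after the Euler substitutions, and checking that a single $\gamma$ can simultaneously enforce $z_{1},z_{2}\neq 0$, $\det H(F_{t})\neq 0$ and $F_{t}\neq 0$ on $S_{1}(F_{t})$; the fact that $h$ is polar- but not radial-weighted homogeneous (the monomial $z_{1}^{m}\overline{z_{1}}$ breaks the pure $\Bbb{R}^{+}$-scaling) makes the Euler bookkeeping somewhat more delicate than in Case $(1)$.
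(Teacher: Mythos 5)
Your overall strategy is the same as the paper's --- perturb so that $\det H(F_{t})$ becomes nontrivial, arrange $\{\det H(F_{t})=0\}\cap S_{1}(F_{t})=\emptyset$, and then conclude indefiniteness exactly as in Lemma~\ref{l8} using polar weighted homogeneity --- and you correctly identify the reason for the monomial $z_{1}^{m}\overline{z_{1}}$, namely that it contributes $\partial^{2}h/\partial z_{1}\partial\overline{z}_{1}=mz_{1}^{m-1}$ to the otherwise degenerate mixed Hessian. But the step you yourself flag as ``the hard part'' is a genuine gap, and the route you propose for it would not go through as stated: the Euler substitution of Lemma~\ref{l13} (equation $(5)$, rewriting $\partial^{2}h/\partial z_{j}\partial z_{j}$ in terms of $\partial h/\partial z_{j}$) relies on $h$ being a holomorphic weighted homogeneous polynomial, and here $h=z_{1}^{m}\overline{z_{1}}+z_{1}^{m-1}+\gamma z_{2}^{m-1}$ is neither holomorphic nor radially weighted homogeneous. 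You acknowledge this difficulty but leave it unresolved, so the non-vanishing of $\det H(F_{t})$ on $S_{1}(F_{t})$ --- the heart of the lemma --- is never actually established.

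The paper avoids all of this bookkeeping with one structural observation you missed: since $g$ is linear and $h$ contains $\overline{z}_{1}$ only to the first power, the entire $\bigl(\partial^{2}F_{t}/\partial\overline{z}_{j}\partial\overline{z}_{k}\bigr)$ block of $H(F_{t})$ vanishes identically, so $\det H(F_{t})$ reduces to the square of the determinant of the off-diagonal block $\bigl(\partial^{2}F_{t}/\partial z_{j}\partial\overline{z}_{k}\bigr)$. A direct computation gives
\[
\det H(F_{t})=t^{2}m^{2}\overline{\beta}^{2}z_{1}^{2m-2}\Bigl(\frac{\partial f}{\partial z_{2}}\Bigr)^{2},
\]
a complex polynomial that is manifestly not identically zero and does not involve $\alpha$ or the diagonal entries at all. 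Hence $\{\det H(F_{t})=0\}$ has finitely many branches, and one simply chooses $\gamma$ so that its intersection with the singular locus $\{\Phi'(\zz,\alpha)=0\}$ is only the origin; no analogue of Lemma~\ref{l14}, no Euler substitution on the diagonal, and no orbit-space argument on $\Bbb{C}^{2}\times S^{1}$ is needed. With that computation in hand, your concluding paragraph (rank of $d^{2}F_{t}$ equal to $3$ by Lemma~$3$, folds rather than cusps since $F_{t}\neq 0$ on $S_{1}(F_{t})$, indefiniteness via the sign-rotation of $\Re\,\partial\lvert F_{t}\rvert^{2}/\partial z_{1}$ along $S^{1}$-orbits) is exactly the paper's.
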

\begin{proof}
If $\ww$ is a singularity of $F_{t}(\zz)$, 
$\ww$ satisfies the following equation: 
\[
\Phi'(\zz, \alpha) := \overline{\frac{\partial f}{\partial z_{1}}}(\zz)\overline{\frac{\partial h}{\partial z_{2}}}(\zz)g(\zz) + 
\Bigl(\alpha\overline{\beta}f(\zz) - \overline{\frac{\partial f}{\partial z_{2}}}(\zz)g(\zz) \Bigr)\overline{\frac{\partial h}{\partial z_{1}}}(\zz) - 
\alpha(f(\zz) + z_{1}^{m})\overline{\frac{\partial h}{\partial z_{2}}}(\zz) = 0. 
\]
Since $H(F_{0}) \equiv 0$, the determinant of $H(F_{t})$ has the form: 
\[
t^{2}m^{2}\overline{\beta}^{2}z_{1}^{2m-2}\Bigr(\frac{\partial f}{\partial z_{2}}\Bigl)^{2}. 
\]
Hence $\det H(F_{t})$ is a complex polynomial which is not identically zero. 
So $\{ \det H(F_{t}) = 0 \}$ has finitely many branches. 
We can choose $h(\zz)$ such that 
the intersection of $\{ \det H(F_{t}) = 0 \}$ and $\{ \Phi'(\zz, \alpha) = 0 \}$ is the origin. 
Since $F_{t}(\zz)$ is also a polar weighted homogeneous mixed polynomial, 
we can show that the singularities of $F_{t}(\zz)$ are indefinite fold singularities, 
by using the same way as Lemma \ref{l8}. 
\end{proof}

\begin{proof}[Proof of Theorem 1]
The singularities of the deformation $F_{t}(\zz)$ of $f(\zz)\overline{g}(\zz)$ except for the~origin 
are indefinite fold singularities by Lemma \ref{l13}, Lemma~\ref{l8} and Lemma~\ref{l9}.  
\end{proof}

If the origin of $\Bbb{C}^{2}$ is a singularity of $F_{t}(\zz)$, 
$F_{t}^{-1}(0) \cap S^{3}_{\varepsilon_{t}}$ is an oriented link in $S^{3}_{\varepsilon_{t}}$ 
for a~sufficiently small $\varepsilon_{t}$. 
We study the topology of $F_{t}^{-1}(0) \cap S^{3}_{\varepsilon_{t}}$. 

\begin{lemma}\label{l10}
The link $F_{t}^{-1}(0) \cap S^{3}_{\varepsilon_{t}}$ is 
a $\bigl(p(m-n), q(m-n)\bigr)$-torus link. 
\end{lemma}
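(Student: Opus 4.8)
The plan is to show that near the origin the lowest-order part of $F_t(\zz)$ controls the topology of the link, and that this lowest-order part is (up to a nowhere-zero factor) the polar weighted homogeneous mixed monomial whose vanishing set is the standard $(p(m-n),q(m-n))$-torus link. First I would recall that $F_t(\zz)=f(\zz)\overline{g}(\zz)+th(\zz)$ is polar weighted homogeneous of polar degree $pq(m-n)$ with respect to the $S^1$-action $s\circ(z_1,z_2)=(s^qz_1,s^pz_2)$, and it is also radial weighted homogeneous under the corresponding $\Bbb R^{+}$-action, so $F_t^{-1}(0)$ is a union of orbits of the $\Bbb C^{*}$-action $c\circ(z_1,z_2)=(c^qz_1,c^pz_2)$ and is therefore a cone; hence its link in any small sphere $S^3_{\varepsilon_t}$ is determined up to isotopy by $F_t^{-1}(0)$ itself, and we only need to identify that zero set near $\mathbf o$.

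Next I would examine $F_t^{-1}(0)$ directly. Since $f(\zz)\overline{g}(\zz)$ has an isolated singularity at $\mathbf o$, away from $\mathbf o$ at least one of the coordinates is nonzero; by Lemma~\ref{l14} (Case~(1)) or the corresponding argument in Case~(2), on the relevant locus $z_1$ and $z_2$ are both nonzero, so on $F_t^{-1}(0)\setminus\{\mathbf o\}$ we may divide the defining equation by the nonvanishing monomials coming from $f$ and $g$. Concretely, writing $f(\zz)=a_1z_1^{pm}+a_2z_2^{qm}+z_1^pz_2^qf'(\zz)$ and $g(\zz)=b_1z_1^{pn}+b_2z_2^{qn}+z_1^pz_2^qg'(\zz)$, and using the chosen $h(\zz)=\gamma_1z_1^{p(m-n)}+\gamma_2z_2^{q(m-n)}$ (resp. the $h$ of Case~(2)), the condition $F_t(\zz)=0$ becomes $f(\zz)\overline{g}(\zz)=-th(\zz)$; I would argue that after the change of variables making the orbit a circle $(e^{iq\theta}z_1,e^{ip\theta}z_2)$ and rescaling radially, the equation reduces on the link to $\gamma_1'z_1^{p(m-n)}+\gamma_2'z_2^{q(m-n)}=0$ for suitable nonzero constants, i.e. $z_1^{p(m-n)}=c\,z_2^{q(m-n)}$ with $c\neq 0$. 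This is exactly the defining equation (after a rotation of coordinates absorbing $c$) of the standard link of the weighted homogeneous polynomial $z_1^{p(m-n)}-z_2^{q(m-n)}$, whose link in $S^3$ is the torus link $T(p(m-n),q(m-n))$ with the correct orientation inherited from the polar $S^1$-action.

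The main obstacle I expect is justifying rigorously that the higher-order terms $z_1^pz_2^qf'$, $z_1^pz_2^qg'$ (and in Case~(2) the extra monomials of $h$) do not change the isotopy type: one cannot simply invoke Milnor's conjugation/fibration theorem for complex polynomials because $F_t$ is genuinely mixed. I would handle this by a Newton-boundary / weight argument — all terms of $F_t$ have the same radial and polar weights, so $F_t$ restricted to the sphere is $S^1$-equivariant of a fixed degree, and the zero set is a $\Bbb C^{*}$-orbit union whose components are parametrized by the finitely many roots of a one-variable polynomial equation on the orbit space $S^3/S^1$; counting these roots with the $S^1$-action gives exactly $\gcd$-many components, each a $(p(m-n)/d,q(m-n)/d)$-cable appropriately, so the total link is $T(p(m-n),q(m-n))$. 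Alternatively, since $F_t$ is polar weighted homogeneous one may quote the description of links of such singularities (e.g. \cite{RSV, O1}) to conclude directly that the link is the one determined by the exponents $p(m-n)$ and $q(m-n)$ of the two pure-power terms that survive on $S_1$-free locus, which is the torus link $\bigl(p(m-n),q(m-n)\bigr)$.
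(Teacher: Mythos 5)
Your overall strategy --- that the lowest-order part of $F_{t}$ at the origin is $th(\zz)$ and that this ``face function'' determines the link --- is in spirit the same as the paper's, which observes that $F_{t}$ is a convenient non-degenerate mixed polynomial, identifies the face function of the unique compact face of its Newton boundary as $t(\gamma_{1}z_{1}^{p(m-n)}+\gamma_{2}z_{2}^{q(m-n)})$, applies Oka's Theorem~43 of \cite{O2} to transfer the component count ($m-n$ components) to $F_{t}^{-1}(0)\cap S^{3}_{\varepsilon_{t}}$, and then uses polar weighted homogeneity together with \cite{EN} to see that each component is a $(p,q)$-torus knot and that any two components have linking number $pq$. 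However, your justification contains a genuine error at exactly the step that needs proof: $F_{t}=f\overline{g}+th$ is \emph{not} radially weighted homogeneous. Under $r\circ\zz=(r^{q}z_{1},r^{p}z_{2})$ the summand $f\overline{g}$ has radial degree $pq(m+n)$ while $th$ has radial degree $pq(m-n)$; only the polar degrees coincide (both equal $pq(m-n)$), which is all the paper claims. Consequently $F_{t}^{-1}(0)$ is not a cone (it is merely $S^{1}$-invariant), your later assertion that ``all terms of $F_{t}$ have the same radial and polar weights'' is false, and the reduction of the defining equation on the link to $\gamma_{1}'z_{1}^{p(m-n)}+\gamma_{2}'z_{2}^{q(m-n)}=0$ cannot be achieved by radial rescaling. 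The whole point of the lemma is that the two summands scale \emph{differently}, so that $th$ dominates near $\mathbf{o}$ and the statement is local (it depends on taking $\varepsilon_{t}$ sufficiently small); making the dominance rigorous is precisely what Oka's non-degeneracy theorem supplies, and your proposal does not correctly replace it.

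A secondary point: you invoke Lemma~\ref{l14} to conclude $z_{1}z_{2}\neq 0$ on the relevant locus, but that lemma concerns $S_{1}(F_{t})$, not $F_{t}^{-1}(0)$; whether the zero locus meets the coordinate axes away from $\mathbf{o}$ is a separate question governed by the sign conditions on $\gamma_{1},\gamma_{2}$ and, again, by the restriction to a small sphere. The clean route is the paper's: convenience and non-degeneracy of $F_{t}$, the face function $th$, \cite[Theorem 43]{O2} for the number of components, and $S^{1}$-invariance plus \cite{EN} for the knot type and linking numbers, which together give the $\bigl(p(m-n),q(m-n)\bigr)$-torus link.
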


\begin{proof}
The deformation $F_{t}(\zz)$ of $f(\zz)\overline{g}(\zz)$ is a convenient non-degenerate mixed polynomial 
in sense of \cite{O2}. 
Let $\Delta$ be the compact face of the Newton boundary of $F_{t}(\zz)$. 
Then the~face function $F_{t \Delta}(\zz)$ is $t(\gamma_{1}z_{1}^{p(m-n)} + \gamma_{2} z_{2}^{q(m-n)})$.   
In \cite[Theorem 43]{O2}, the number of the~connected components of~$F_{t}^{-1}(0) \cap S^{3}_{\varepsilon_{t}}$ 
is equal to that of $F_{t \Delta}^{-1}(0) \cap S^{3}_{\varepsilon_{t}}$ where $0 < \varepsilon_{t} << 1$. 
Since $F_{t \Delta}(\zz) = t(\gamma_{1}z_{1}^{p(m-n)} + \gamma_{2} z_{2}^{q(m-n)})$ has $m - n$ irreducible components, 
the number of link components of $F_{t}^{-1}(0) \cap S^{3}_{\varepsilon_{t}}$ is equal to $m - n$.
By the choice of $h(\zz)$, 
$F_{t \Delta}$ is a polar weighted homogeneous polynomial. 
So $F_{t \Delta}^{-1}(0)$ is an invariant set of the~$S^{1}$-action. 
In \cite{EN}, the connected component of $F_{t \Delta}^{-1}(0) \cap S^{3}_{\varepsilon_{t}}$ is isotopic to a $(p, q)$-torus knot
whose orientation coincides with that of the $S^{1}$-action and 
the linking numbers of components of $F_{t}^{-1}(0) \cap S^{3}_{\varepsilon_{t}}$ are equal to $pq$. 
\end{proof}

\section{Proof of Theorem~\ref{thm2}}
Let $F_{t}(\zz)$ be a deformation of $f(\zz)\overline{g}(\zz)$ in Theorem $1$. 
In this section, we study the~deformation of~$F_{t}(\zz)$: 
\[
F_{t, s}(\zz) := f(\zz)\overline{g}(\zz) + th(\zz) + s\ell(\zz), 
\]
where $\ell(\zz) = c_{1}z_{1} + c_{2}z_{2}, c_{1}, c_{2} \in \Bbb{C} \setminus \{ 0 \}$ and $0 < s << t << 1$. 
Suppose that $c_{1}$ and $c_{2}$ satisfy 
\renewcommand{\theenumi}{\roman{enumi}}
\begin{enumerate}
\item
$\{td_{h}h(\zz) + s(qc_{1}z_{1} + pc_{2}z_{2}) = 0 \}$ and $\{ f(\zz)\overline{g}(\zz) = 0 \}$ have no common branches, 
\item
$\{(d_{h} - q)qc_{1}z_{1} + (d_{h} - p)pc_{2}z_{2} = 0 \}$ and $\{ f(\zz)\overline{g}(\zz) = 0 \}$ have no common branches, 
\end{enumerate}
where $d_{h} = pq(m - n)$ and $0 < s << t << 1$. 
The mixed Hessian $H(F_{t, s})$ of~$F_{t,s}$ is equal to~$H(F_{t, 0})$. 
To prove Theorem $2$, 
we first show that non-isolated singularities of $F_{t, s}(\zz)$ are indefinite fold singularities. 

\begin{lemma}\label{l11}
There exist $c_{1}$ and $c_{2}$ such that 
any point of $S_{1}(F_{t, s})$ is an indefinite fold singularity, where $0 <~s <<~t<< 1$. 
\end{lemma}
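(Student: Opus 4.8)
\textbf{Proof proposal for Lemma~\ref{l11}.}
The plan is to mimic the two-step structure already used in Lemma~\ref{l13} and Lemma~\ref{l8}: first arrange, by a suitable choice of $c_1,c_2$, that $F_{t,s}(\zz)$ has no singularities on $S_1(F_{t,s})$ where $H(F_{t,s})$ degenerates and that $F_{t,s}$ does not vanish on $S_1(F_{t,s})$; then, since $F_{t,s}$ is again polar weighted homogeneous, deduce that all of $S_1(F_{t,s})$ consists of indefinite fold singularities exactly as in Lemma~\ref{l8}. The first thing I would do is record that $H(F_{t,s})=H(F_{t,0})$ (stated just above), so the condition $\{\det H(F_{t,s})=0\}\cap S_1(F_{t,s})=\emptyset$ only needs to be checked against the already-controlled zero set of $\det H(F_{t,0})$; since that determinant is not identically zero and defines a curve with finitely many branches (invariant under the $S^1$- and $\Bbb R^{+}$-actions on $\Bbb C^2\times S^1$, as in Lemma~\ref{l13}), it suffices to ensure the singular set $S_1(F_{t,s})$ avoids those finitely many branches for small $s$.

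Next I would write out the singularity equation. If $\ww\in S_1(F_{t,s})$ then by Proposition~1 there is $\alpha\in S^1$ with $\overline{\partial F_{t,s}/\partial z_j}(\ww)=\alpha\,\partial F_{t,s}/\partial\bar z_j(\ww)$ for $j=1,2$; since $\partial/\partial\bar z_j$ of $\ell$ vanishes, eliminating $\alpha$ gives a mixed-polynomial equation $\Phi_{t,s}(\zz,\alpha)=0$ that is a perturbation of the corresponding equation for $F_t$. Using the Euler-type identity (1) for $f$ and $g$ together with hypotheses (i) and (ii), I would run the same computation as in Lemma~\ref{l13}: apply (1) to rewrite $\sum p_j z_j\,\partial F_{t,s}/\partial z_j$ in terms of $f\overline g$, $\overline{f}g$, $h$, and $\ell$, and use the resulting linear relation to express $F_{t,s}(\zz)$ on $S_1(F_{t,s})$ as a nonzero multiple of $f(\zz)\overline g(\zz)-\overline\alpha\,\overline f(\zz)g(\zz)$ plus error terms governed by hypotheses (i), (ii), which are precisely designed so that these error terms cannot conspire to make $F_{t,s}$ vanish on $S_1(F_{t,s})$ away from the origin. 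Hypothesis (i) prevents the combined lowest-order part $t\,d_h h+s(qc_1z_1+pc_2z_2)$ from vanishing where $f\overline g$ does, and (ii) controls the second-order contribution of $\ell$ to the relevant $2\times2$ minors; together with Lemma~\ref{l14}-type non-vanishing of coordinates these give $\Phi_{t,s}\neq0$ on the coordinate axes.

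Then I would invoke the dimension/genericity argument verbatim: the sets $V_1=\{f\overline g-\overline\alpha\,\overline f g=0\}$ and $V_2=\{\det H(F_{t,0})=0\}$ in $\Bbb C^2\times S^1$ are $3$-dimensional algebraic sets, invariant under the $S^1$- and $\Bbb R^{+}$-actions, hence their orbit spaces are $1$-dimensional curves with finitely many branches depending only on $f$ and $g$; choosing $c_1,c_2$ generically (subject to (i), (ii), and, if needed, sign conditions of the type $\Re(\overline{a_j}b_j/\overline{c_j})>0$) makes the intersection of $(V_1\cup V_2)$ with $\{\Phi_{t,s}=0\}$ equal to just the origin for all $0<s<<t<<1$. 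By Lemma~6's analogue the origin is not in $S_1(F_{t,s})$ when it is not a singularity, and in any case it is excluded from the statement. Finally, because $F_{t,s}$ is polar weighted homogeneous of degree $pq(m-n)$, the orbit argument of Lemma~\ref{l8} shows $\frac{dF_{t,s}}{d\theta}=i\,pq(m-n)e^{i pq(m-n)\theta}F_{t,s}\neq0$ along each $S^1$-orbit in $S_1(F_{t,s})$, so every such point is a fold singularity, and the maximum/minimum-of-$|F_{t,s}|^2$ contradiction argument of Lemma~\ref{l8} rules out definite folds, giving the claim.

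The main obstacle I expect is making the perturbation-by-$s\ell(\zz)$ step genuinely uniform: since $\ell$ has the same degree as a piece of $g$ and is lower order than $h$, one must check that adding $s\ell$ does not create new branches of $S_1(F_{t,s})$ lying on $V_1\cup V_2$ that are absent for $F_t$, and that the finitely-many-branches count is stable as $s\to0$ with $t$ fixed; the conditions (i) and (ii) are exactly the hypotheses that should make this work, but verifying that they suffice — in particular that the $\alpha$-dependence in $\Phi_{t,s}$ still yields a proper-enough intersection — is the technical heart of the argument.
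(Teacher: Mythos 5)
There is a genuine gap at the heart of your argument: you assert that $F_{t,s}$ is again polar weighted homogeneous of degree $pq(m-n)$ and then invoke the orbit computation of Lemma~\ref{l8} (namely $\frac{dF_{t,s}}{d\theta}=i\,pq(m-n)e^{ipq(m-n)\theta}F_{t,s}$ along $S^1$-orbits, and the rotation of the sign of $\Re\,\partial|F_{t,s}|^2/\partial z_1$ by the $S^1$-action) to get fold points and rule out definite folds. But the perturbation term $\ell(\zz)=c_1z_1+c_2z_2$ has polar degree $q$ in $z_1$ and $p$ in $z_2$, not $pq(m-n)$, so $F_{t,s}=f\overline g+th+s\ell$ is \emph{not} polar weighted homogeneous, its singular set is not a union of $S^1$-orbits, and neither the orbit differentiation nor the ``rotate to a point where the derivative has the opposite sign'' trick is available. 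Indeed, breaking the $S^1$-symmetry is the entire purpose of adding $\ell$: Lemma~\ref{l12} needs $S_2(F_{t,s})$ to be a \emph{finite} set of isolated mixed Morse points, which would be impossible if the singularities still came in circles. The same objection applies to your claim that the perturbed singularity equation $\Phi_{t,s}$ and the relevant algebraic sets are invariant under the $S^1$- and $\Bbb R^{+}$-actions; they are not, so the ``finitely many branches in the orbit space'' genericity argument of Lemma~\ref{l13} does not transfer.

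The paper's proof avoids all of this by a stability argument rather than a symmetry argument. Since $H(F_{t,s})=H(F_{t,0})$ and the conditions established for $F_{t,0}$ in Theorem~1 (nonvanishing of $\det H(F_{t,0})$ near $S_1(F_{t,0})$ and nonvanishing of the differential of $F_{t,0}$ restricted to $S_1(F_{t,0})$) are open, one fixes a neighborhood $U_{F_{t,0}}$ of $S_1(F_{t,0})$ where they hold and takes $s$ so small that $S_1(F_{t,s})\subset U_{F_{t,0}}$; a chain-rule computation then shows $dF_{t,s}/dx_1^s\neq0$ for small $s$, giving folds. Indefiniteness is likewise inherited: writing $F_{t,s}=(x_1^s,I_{t,s})$ in normal form with $I_{t,s}=-(x_2^0)^2+(x_3^0)^2+e(x_4^0)^2+sI'_{t,s}$, a direct difference-quotient estimate produces curves through each point of $S_1(F_{t,s})$ along which $I_{t,s}$ is monotone decreasing and others along which it is monotone increasing, which is exactly the indefinite fold condition. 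Your proposal would need to be reworked along these perturbative lines; as written, the step that concludes ``fold'' and the step that concludes ``indefinite'' both rest on a homogeneity that the deformation destroys.
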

\begin{proof}
In the proof of Theorem~$1$, we proved that $\{ \det H(F_{t, 0}) = 0 \} \cap S_{1}(F_{t,0}) = \emptyset$ and  
the differential of $F_{t, 0}|_{S_{1}(F_{t, 0})}$ is non-zero. 
So there exists a neighborhood $U_{F_{t, 0}}$ of $S_{1}(F_{t, 0})$ such that 
\begin{equation*}
\begin{split}
\{ \det H(F_{t, 0}) = 0 \} \cap U_{F_{t, 0}} = \emptyset, \\
\frac{d}{d x_{1}^{0}}F_{t, 0}(\ww) \neq 0,  
\end{split}
\end{equation*}
where $x_{1}^{0}$ is a coordinate of $S_{1}(F_{t, 0})$ in $\Bbb{R}^{4}$ and $\ww \in S_{1}(F_{t, 0})$. 
We take non-zero complex numbers $c_{1}, c_{2}$ and sufficiently small positive real number $s_{0}$ such that 
$S_{1}(F_{t, s}) \subset U_{F_{t, 0}}$ for any $0 < s \leq s_{0}$. 
Then  
the intersection of $S_{1}(F_{t, s})$ and $\{ \det H(F_{t, s}) = 0 \}$ is empty. 
Thus $j^{1}F_{t, s}$ is transversal to $S_{1}(\Bbb{R}^{4}, \Bbb{R}^{2})$ at~$S_{1}(F_{t, s})$. 
We check the differential of $F_{t, s} : S_{1}(F_{t, s}) \rightarrow \Bbb{R}^{2}$. 
Let $(x_{1}^{s}, \dots, x_{4}^{s})$ be a family of coordinates of $\Bbb{R}^4$, smoothly parametrized by $s$, 
such that $x_{1}^{s}$ is the coordinate of 
$S_{1}(F_{t, s})$. 
Then we have 
\[
\frac{dF_{t,s}}{dx_{1}^{s}} = \frac{\partial F_{t, 0}}{\partial x_{1}^{0}}\frac{\partial x_{1}^{0}}{\partial x_{1}^{s}} + \cdots + 
\frac{\partial F_{t, 0}}{\partial x_{4}^{0}}\frac{\partial x_{4}^{0}}{\partial x_{1}^{s}} 
+ s\Bigl(\frac{\partial \ell}{\partial x_{1}^{0}}\frac{\partial x_{1}^{0}}{\partial x_{1}^{s}} + \cdots  
+ \frac{\partial \ell}{\partial x_{4}^{0}}\frac{\partial x_{4}^{0}}{\partial x_{1}^{s}} \Bigl). 
\]
Since $\frac{\partial F_{t, 0}}{\partial x_{1}^{0}}$ is non-zero on $U_{F_{t, 0}}$, 
$\frac{dF_{t, s}}{dx_{1}^{s}}$ is non-zero for $0 < s << 1$. 
Thus any point of $S_{1}(F_{t, s})$ is a fold singularity. 

By changing coordinates of $\Bbb{R}^{4}$, we may assume that $F_{t, s} : \Bbb{R}^{4} \rightarrow \Bbb{R}^{2}$ 
has the following form: 
\[
F_{t, s} = \bigl(x_{1}^{s}, I_{t, s}(x_{2}^{s}, x_{3}^{s}, x_{4}^{s})\bigr) 
\] 
on $U_{F_{t, 0}}$. 
Since $I_{t, 0}$ is $\lvert F_{t, 0}\lvert$, 
we set $I_{t, s} = \lvert F_{t, 0}(x_{1}^{0}, \dots, x_{4}^{0})\lvert + sI'_{t, s}(x_{1}^{0}, \dots, x_{4}^{0})$. 
Since $S_{1}(F_{t, 0})$ are indefinite fold singularities, 
by choosing suitable coordinates $(x_{2}^{0}, x_{3}^{0}, x_{4}^{0})$, 
we may assume that 
\begin{equation*}
\begin{split}
I_{t, s} &= 
-(x_{2}^{0})^{2} + (x_{3}^{0})^{2} + e(x_{4}^{0})^{2} + sI'_{t, s}(x_{1}^{0}, \dots, x_{4}^{0}), \\
\frac{\partial I_{t, s}}{\partial x_{2}^{0}} &= 
-2x_{2}^{0} + s\frac{\partial I'_{t, s}}{\partial x_{2}^{0}}, \\
\end{split}
\end{equation*}
where $e = \pm 1$. 
Let $(\iota_{1}^{s}, \iota_{2}^{s}, \iota_{3}^{s}, \iota_{4}^{s})$ 
be a point of $S_{1}(F_{t, s})$. 
Then we have 
\[
-2\iota_{2}^{s} + s\frac{\partial I'_{t, s}}{\partial x_{2}^{0}}(\iota_{1}^{s}, \iota_{2}^{s}, \iota_{3}^{s}, \iota_{4}^{s}) = 0. 
\]
We first fix $x_{1}^{0}, x_{3}^{0}$ and $x_{4}^{0}$, 
i.e., $x_{1}^{0} = \iota_{1}^{s}, x_{3}^{0} = \iota_{3}^{s}$ and $x_{4}^{0} = \iota_{4}^{s}$. 
Since $s$ is sufficiently small, 
$\frac{\partial I_{t, s}}{\partial x_{2}^{0}}$ satisfies 
\begin{equation*}
\begin{split}
\frac{\partial I_{t, s}}{\partial x_{2}^{0}} 
&= -2x_{2}^{0} + s\frac{\partial I'_{t, s}}{\partial x_{2}^{0}}(\iota_{1}^{s}, x_{2}^{0}, \iota_{3}^{s}, \iota_{4}^{s})  \\
&= -2x_{2}^{0} + s\frac{\partial I'_{t, s}}{\partial x_{2}^{0}}(\iota_{1}^{s}, x_{2}^{0}, \iota_{3}^{s}, \iota_{4}^{s}) 
+ 2\iota_{2}^{0} - s\frac{\partial I'_{t, s}}{\partial x_{2}^{0}}(\iota_{1}^{s}, \iota_{2}^{s}, \iota_{3}^{s}, \iota_{4}^{s}) \\
&= -2\bigl(x_{2}^{0} - \iota_{2}^{s} \bigr) + 
s\biggl(\frac{\partial I'_{t, s}}{\partial x_{2}^{0}}(\iota_{1}^{s}, x_{2}^{0}, \iota_{3}^{s}, \iota_{4}^{s}) - 
\frac{\partial I'_{t, s}}{\partial x_{2}^{0}}(\iota_{1}^{s}, \iota_{2}^{s}, \iota_{3}^{s}, \iota_{4}^{s}) \biggr) \\
&= \bigl(x_{2}^{0} - \iota_{2}^{s} \bigr)\Biggl(-2 + s\frac{\frac{\partial I'_{t, s}}{\partial x_{2}^{0}}(\iota_{1}^{s}, x_{2}^{0}, \iota_{3}^{s}, \iota_{4}^{s}) - \frac{\partial I'_{t, s}}{\partial x_{2}^{0}}(\iota_{1}^{s}, \iota_{2}^{s}, \iota_{3}^{s}, \iota_{4}^{s})}{x_{2}^{0} - \iota_{2}^{s}} \Biggr) < 0. 
\end{split}
\end{equation*}
Thus there exists 
a~curve $\zz_{s}(u)$ on $U_{F_{t, 0}}$ such that 
$\zz_{s}(0) \in S_{1}(F_{t, s})$ and 
$I_{t, s}$ is monotone decreasing on $\zz_{s}(u)$. 
Next we fix $x_{1}^{0}, x_{2}^{0}$ and $x_{4}^{0}$. Then we can show that there exists  
a~curve $\zz'_{s}(u)$ on $U_{F_{t, 0}}$ such that 
$\zz'_{s}(0) \in S_{1}(F_{t, s})$ and 
$I_{t, s}$ is monotone increasing on $\zz'_{s}(u)$. 
So $S_{1}(F_{t, s})$ is the set of indefinite fold singularities. 
\end{proof}

Next we consider isolated singularities of $F_{t, s}(\zz)$. 
Then these singularities belong to $S_{2}(F_{t,s})$. 
We study the topological types of the links at each point of $S_{2}(F_{t,s})$. 

\begin{lemma}\label{l12}
Let $F_{t, s}(\zz)$ be a deformation of $F_{t}(\zz)$ in Lemma \ref{l11}. 
Then $S_{2}(F_{t, s})$ is the set of finite mixed Morse singularities. 
\end{lemma}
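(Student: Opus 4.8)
The plan is to analyze the isolated singularities of $F_{t,s}(\zz)$ directly via their Newton boundary, exactly as in the proof of Lemma~\ref{l10}, and to show that at each such point the link in a small sphere is the Hopf link (with the correct orientation), which is the link of a complex Morse singularity $z_1^2 + z_2^2$. First I would observe that $S_{2}(F_{t,s})$ is nonempty only for a discrete (in fact finite) set of points: by conditions (i) and (ii) on $c_1, c_2$, together with Lemma~\ref{l14}-type arguments, any $\ww \in S_2(F_{t,s})$ must have both coordinates nonzero unless $\ww = \mathbf o$, and the defining equations $\partial F_{t,s}/\partial z_j = \partial F_{t,s}/\partial \bar z_j = 0$ cut out a $0$-dimensional set away from the origin because $s\ell(\zz)$ breaks the $S^1$-symmetry that forced positive-dimensional $S_1$ strata. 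So $S_2(F_{t,s})$ consists of finitely many points $\ww_1, \dots, \ww_N$ (possibly including $\mathbf o$).

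Next, for each $\ww_k$ I would put $F_{t,s}$ in the local form provided by Lemma~3 and the discussion preceding it: since $\det H(F_{t,s}) = \det H(F_{t,0})$ is not identically zero (by Lemmas~4 and 5, resp.\ Lemma~\ref{l9} in the linear case) and, by the genericity choice of $c_1, c_2$, one can arrange $\det H(F_{t,s})(\ww_k) \neq 0$, the mixed Hessian has full rank $4$ at each isolated singularity. By Lemma~3 the representation matrix of $d^2F_{t,s}$ then has rank $3$, so after the coordinate changes in the proof of Lemma~3 the map has the normal form $(x_1', Q)$ with $H_{\mathbb R}(Q)$ of rank $3$ in the last three variables; equivalently, in complex notation near $\ww_k$ one reduces to a map whose $2$-jet is a nondegenerate mixed quadratic form. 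The link of such a nondegenerate quadratic mixed singularity is computed from its face function on the Newton boundary, and, since the quadratic part is polar weighted homogeneous of the same type as $z_1 z_2$ or $z_1^2+z_2^2$ after a linear change, \cite[Theorem 43]{O2} (as used in Lemma~\ref{l10}) identifies the link: it is a single fibered knot of multiplicity~$1$, i.e.\ an unknot bounding a disk fiber, with the orientation induced by the $S^1$-action. This is precisely the link of the complex Morse singularity $z_1^2 + z_2^2$, so $\ww_k$ is a mixed Morse singularity.

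Concretely, the key steps in order are: (1) show $S_2(F_{t,s})$ is finite, using conditions (i), (ii) and the argument of Lemmas~6 and~\ref{l14} to exclude singularities on the coordinate axes and the breaking of $S^1$-invariance to force $0$-dimensionality; (2) show $\det H(F_{t,s})(\ww_k) \neq 0$ at every $\ww_k \in S_2(F_{t,s})$ by one more genericity choice of $c_1, c_2$ (the set $\{\det H(F_{t,s}) = 0\}$ is a proper algebraic/mixed-algebraic set and can be made to meet $S_2(F_{t,s})$ only at $\mathbf o$, where a separate direct computation of the face function of $F_{t,s}$ at $\mathbf o$ — which by Lemma~\ref{l10} is $t(\gamma_1 z_1^{p(m-n)} + \gamma_2 z_2^{q(m-n)})$ perturbed by $s\ell$ — must be handled, but adding the linear term $s\ell$ does not change the compact face, so the origin's link is unchanged and we may instead argue the origin is no longer in $S_2$, or treat it by the same normal-form reduction if $m - n$ forces a quadratic compact face); (3) apply Lemma~3 to get rank $d^2F_{t,s} = 3$, hence the mixed $2$-jet at $\ww_k$ is a nondegenerate mixed Morse form; (4) invoke \cite[Theorem 43]{O2} / \cite{EN} exactly as in Lemma~\ref{l10} to compute the link of the face function at $\ww_k$ and identify it, as an oriented link, with the Hopf link of $z_1^2 + z_2^2$.

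The main obstacle I expect is step (4): showing that the \emph{oriented} link type at $\ww_k$ genuinely coincides with that of a \emph{complex} Morse singularity, rather than merely being the unknot as an unoriented link. Having full-rank mixed Hessian guarantees the $2$-jet is a nondegenerate mixed quadratic, but not every nondegenerate mixed quadratic in two complex variables is orientation-preservingly isotopic to $z_1^2+z_2^2$ — one must check that the polar weights at $\ww_k$ make the face function polar weighted homogeneous of positive polar degree so that the $S^1$-orbit orientation is the Milnor-fibration orientation, and that the multiplicity count from \cite[Theorem 43]{O2} gives exactly one component with linking data matching the complex case. Controlling this will require recording that $F_{t,s}$ remains polar weighted homogeneous only globally for the $s=0$ deformation, so near $\ww_k \neq \mathbf o$ one works instead with the local quadratic approximation and must verify its non-degeneracy survives as a \emph{mixed} (not just real) quadratic of Milnor type; this is where the properties of the mixed Hessian established in Section~2.3, especially Lemmas~2 and~3, do the essential work.
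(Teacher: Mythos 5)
Your overall strategy (finiteness of $S_2(F_{t,s})$, then a local normal form at each isolated singularity, then an identification of the link) parallels the paper's, but two of your central steps do not work as stated. First, your claim that the mixed Hessian $H(F_{t,s})$ has full rank $4$ at the points of $S_2(F_{t,s})$ is false: at such a point the equations $\partial F_{t,s}/\partial\bar z_j=0$ force the lower-right block $\bigl(f\,\overline{\partial^2 g/\partial z_j\partial z_k}\bigr)$ to vanish, and the paper's computation shows that $H(F_{t,s})$ is congruent there to a matrix of rank $3$. Relatedly, Lemma~$3$ cannot be invoked at these points at all: it is a statement about $\ww\in S_1(P)$, where $dP$ has rank $1$ and $d^2P$ takes values in $L^{*}\otimes G$ with $G$ one-dimensional; at a point of $S_2$ the differential has rank $0$ and that framework does not apply. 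What actually carries the argument is that, although $H(F_{t,s})$ has rank $3$, the individual real Hessians $H_{\Bbb R}(\Re F_{t,s})$ and $H_{\Bbb R}(\Im F_{t,s})$ are both nonsingular; this is verified by an explicit matrix computation, not by a rank count on $H(F_{t,s})$.

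Second, and more seriously, you correctly identify the crux --- that a nondegenerate mixed quadratic $2$-jet need not have the oriented link of $z_1^2+z_2^2$ --- but you do not resolve it. The explicit normal form obtained in the paper is $v_1^2+2\overline{v}_1v_2$, a genuinely mixed quadratic with polar weights $(1,3)$ rather than $(1,1)$, whose link is a Hopf link only as an unoriented link; local data alone cannot determine its sign. The paper settles the sign by a global argument: by Lemma~\ref{l10} the link of $F_{t}$ at the origin is a positively oriented torus link, so $F_{t,s}:B^4_{\delta}\to D^2$ is an unfolding in the sense of Neumann--Rudolph with enhanced Milnor number equal to $0$, whereas the presence of a negative Hopf point would force the enhancement to be positive --- a contradiction. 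This enhancement argument is the essential content of the lemma and is absent from your proposal. (A minor further point: adding $s\ell$ does change the compact face of the Newton boundary at the origin --- the linear terms dominate --- which is precisely why $\partial F_{t,s}/\partial z_j(\mathbf{o})=sc_j\neq 0$ and the origin drops out of $S_2(F_{t,s})$ altogether.)
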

\begin{proof}
If $\ww = (z_{1}, z_{2})$ belongs to $S_{2}(F_{t,s})$, $f(\ww)\overline{g}(\ww) = 0$ 
and $td_{h}h(\ww) + s(qc_{1}z_{1} + pc_{2}z_{2}) = 0$ 
by Proposition $1$ and equation $(1)$. 
Since $c_{1}$ and $c_{2}$ satisfy the condition $(\text{i})$, 
the number of $S_{2}(F_{t, s})$ is finite. 
The~determinant of $H(F_{t, s})$ is equal to 
\[ H(F_{t, s}) =    \left(
        \begin{array}{@{\,}cccccccc@{\,}}
         \frac{\partial^{2}f}{\partial z_{1}\partial z_{1}}\overline{g} + 
         t\frac{\partial^{2}h}{\partial z_{1}\partial z_{1}}
         & \frac{\partial^{2}f}{\partial z_{2}\partial z_{1}}\overline{g} 
         & \frac{\partial f}{\partial z_{1}}\overline{\frac{\partial g}{\partial z_{1}}} 
         & \frac{\partial f}{\partial z_{1}}\overline{\frac{\partial g}{\partial z_{2}}} \\ 
         \frac{\partial^{2}f}{\partial z_{1}\partial z_{2}}\overline{g}  
         &  \frac{\partial^{2}f}{\partial z_{2}\partial z_{2}}\overline{g} + 
         t\frac{\partial^{2}h}{\partial z_{2}\partial z_{2}}  
         & \frac{\partial f}{\partial z_{2}}\overline{\frac{\partial g}{\partial z_{1}}} 
         & \frac{\partial f}{\partial z_{2}}\overline{\frac{\partial g}{\partial z_{2}}} \\ 
         \frac{\partial f}{\partial z_{1}}\overline{\frac{\partial g}{\partial z_{1}}} 
         & \frac{\partial f}{\partial z_{2}}\overline{\frac{\partial g}{\partial z_{1}}} 
         & 0 
         & 0 \\ 
         \frac{\partial f}{\partial z_{1}}\overline{\frac{\partial g}{\partial z_{2}}} 
         & \frac{\partial f}{\partial z_{2}}\overline{\frac{\partial g}{\partial z_{2}}} 
         & 0 
         & 0  
        \end{array}
       \right).  \]
Assume that $\frac{\partial f}{\partial z_{j}}(\ww)\overline{\frac{\partial g}{\partial z_{k}}}(\ww) = 0$ for any $j, k \in \{ 1,2 \}$. 
By using equation $(1)$, 
$f(\ww)\overline{\frac{\partial g}{\partial z_{j}}}(\ww)~= \frac{\partial f}{\partial z_{j}}(\ww)\overline{g}(\ww) = 0$. 
Hence $\ww$ is a singularity of $f(\zz)\overline{g}(\zz)$ by Proposition~$1$. 
Since $f(\zz)\overline{g}(\zz)$ has an~isolated singularity at the origin, $\ww$ is the origin. 
Then we have 
\begin{equation*}
\begin{split}
\frac{\partial F_{t, s}}{\partial z_{j}}(\ww) = \frac{\partial F_{t, s}}{\partial z_{j}}(\mathbf{o}) = 
\frac{\partial f}{\partial z_{j}}(\mathbf{o})\overline{g}(\mathbf{o}) + 
t\frac{\partial h}{\partial z_{j}}(\mathbf{o}) + sc_{j} = sc_{j} \neq 0, 
\end{split}
\end{equation*}
where $s \neq 0$. The origin $\mathbf{o}$ does not belong to $S_{2}(F_{t, s})$. This is a contradiction. 
So there exist $j, k \in \{1, 2 \}$ such that 
$\frac{\partial f}{\partial z_{j}}(\ww)\overline{\frac{\partial g}{\partial z_{k}}}(\ww)$ 
is non-zero at $\ww \in S_{2}(F_{t, s})$. 
Assume $\frac{\partial f}{\partial z_{2}}(\ww)\overline{\frac{\partial g}{\partial z_{1}}}(\ww)$ is non-zero. 
We calculate $H(F_{t, s})$ by using equation $(1)$. 
Since $c_{1}$ and $c_{2}$ satisfy the condition $(\text{ii})$, the Hessian  $H(F_{t, s})$ is congruent to 
\[ H(F_{t, s}) \cong    \left(
        \begin{array}{@{\,}cccccccc@{\,}}
-s\{(d_{h}-q)qc_{1}z_{1} + (d_{h}-p)pc_{2}z_{2} \} & 0 & 0 & 0 \\
0 & 0 & \frac{\partial f}{\partial z_{2}}\overline{\frac{\partial g}{\partial z_{1}}} & 0 \\ 
0 & \frac{\partial f}{\partial z_{2}}\overline{\frac{\partial g}{\partial z_{1}}} & 0 & 0 \\
0 & 0 & 0 & 0
\end{array}
       \right) 
       \cong    \left(
        \begin{array}{@{\,}cccccccc@{\,}}
1 & 0 & 0 & 0 \\
0 & 0 & 1 & 0 \\ 
0 & 1 & 0 & 0 \\
0 & 0 & 0 & 0
\end{array}
       \right) 
        \]
at $\ww \in S_{2}(F_{t, s})$, where $d_{h} = pq(m - n)$. 
By the same argument, we can check that 
if either $\frac{\partial f}{\partial z_{1}}(\ww)\overline{\frac{\partial g}{\partial z_{1}}}(\ww)$, 
$\frac{\partial f}{\partial z_{1}}(\ww)\overline{\frac{\partial g}{\partial z_{2}}}(\ww)$ or 
$\frac{\partial f}{\partial z_{2}}(\ww)\overline{\frac{\partial g}{\partial z_{2}}}(\ww)$ is non-zero, 
$H(F_{t, s})$ is congruent to the~above right-hand matrix. 
We change the coordinates of $\Bbb{C}^{2}$ such that, at the singularity of $F_{t, s}(\zz)$, 
the mixed Hessian $H(F_{t, s})$ is equal to 
\[
\left(
        \begin{array}{@{\,}cccccccc@{\,}}
1 & 0 & 0 & 0 \\
0 & 0 & 1 & 0 \\ 
0 & 1 & 0 & 0 \\
0 & 0 & 0 & 0
\end{array}
       \right). 
\]
We identify $\Bbb{C}^{2}$ with $\Bbb{R}^{4}$. 
Then $H_{\Bbb{R}}(\Re F_{t,s}) + iH_{\Bbb{R}}(\Im F_{t,s})$ has the following form: 
\begin{equation*}
\begin{split}
 H_{\Bbb{R}}(\Re F_{t,s}) + iH_{\Bbb{R}}(\Im F_{t,s}) &= 
\left(
        \begin{array}{@{\,}cccccccc@{\,}}
1 & 0 & 1 & 0 \\
0 & 1 & 0 & 1 \\ 
i & 0 & -i & 0 \\
0 & i & 0 & -i
\end{array}
       \right)
\left(
        \begin{array}{@{\,}cccccccc@{\,}}
1 & 0 & 0 & 0 \\
0 & 0 & 1 & 0 \\ 
0 & 1 & 0 & 0 \\
0 & 0 & 0 & 0
\end{array}
       \right)
       \left(
        \begin{array}{@{\,}cccccccc@{\,}}
1 & 0 & i & 0 \\
0 & 1 & 0 & i \\ 
1 & 0 & -i & 0 \\
0 & 1 & 0 & -i
\end{array}
       \right) \\
       &= 
       \left(
        \begin{array}{@{\,}cccccccc@{\,}}
1 & 1 & i & i \\
1 & 0 & -i & 0 \\ 
i & -i & -1 & 1 \\
i & 0 & 1 & 0
\end{array}
       \right) \\
       &= 
       \left(
        \begin{array}{@{\,}cccccccc@{\,}}
1 & 1 & 0 & 0 \\
1 & 0 & 0 & 0 \\ 
0 & 0 & -1 & 1 \\
0 & 0 & 1 & 0
\end{array}
       \right)
+ i\left(
        \begin{array}{@{\,}cccccccc@{\,}}
0 & 0 & 1 & 1 \\
0 & 0 & -1 & 0 \\ 
1 & -1 & 0 & 0 \\
1 & 0 & 0 & 0
\end{array}
       \right).
\end{split}
\end{equation*}

Since $H_{\Bbb{R}}(\Re F_{t, s})$ and $H_{\Bbb{R}}(\Im F_{t, s})$ are regular matrices, 
$\Re F_{t, s}$ and $\Im F_{t, s}$ are Morse functions. 
Put 
$R = \left(
\begin{array}{@{\,}cccccccc@{\,}}
        1 & -1 & 0 & 0 \\
        0 & 1 & 0 & 0 \\
        0 & 0 & 1 & 1 \\
        0 & 0 & 0 & 1 
\end{array}
 \right)$, we have 
\begin{equation*}
\begin{split}
 {}^{t}RH_{\Bbb{R}}(\Re F_{t, s})R &= 
 \left(
        \begin{array}{@{\,}cccccccc@{\,}}
        1 & 0 & 0 & 0 \\
        -1 & 1 & 0 & 0 \\
        0 & 0 & 1 & 0 \\
        0 & 0 & 1 & 1 
\end{array}
       \right)
\left(
        \begin{array}{@{\,}cccccccc@{\,}}
1 & 1 & 0 & 0 \\
1 & 0 & 0 & 0 \\ 
0 & 0 & -1 & 1 \\
0 & 0 & 1 & 0
\end{array}
       \right)
       \left(
        \begin{array}{@{\,}cccccccc@{\,}}
        1 & -1 & 0 & 0 \\
        0 & 1 & 0 & 0 \\
        0 & 0 & 1 & 1 \\
        0 & 0 & 0 & 1 
\end{array}
       \right) \\
       &= 
       \left(
        \begin{array}{@{\,}cccccccc@{\,}}
        1 & 0 & 0 & 0 \\
        0 & -1 & 0 & 0 \\
        0 & 0 & -1 & 0 \\
        0 & 0 & 0 & 1 
\end{array}
       \right). 
\end{split}
\end{equation*}
Hence there exist the coordinates of $\Bbb{R}^{4}$ such that $\Re F_{t, s}$ has the following form: 
\[
\Re F_{t,s} = x_{1}^{2} - y_{1}^{2} - x_{2}^{2} + y_{2}^{2}. 
\]
On the other hand, the Hessian of $\Im F_{t, s}$ is congruent to 
\begin{equation*}
\begin{split}
 {}^{t}RH_{\Bbb{R}}(\Im F_{t, s})R &= 
\left(
        \begin{array}{@{\,}cccccccc@{\,}}
        1 & 0 & 0 & 0 \\
        -1 & 1 & 0 & 0 \\
        0 & 0 & 1 & 0 \\
        0 & 0 & 1 & 1 
\end{array}
       \right)
\left(
        \begin{array}{@{\,}cccccccc@{\,}}
0 & 0 & 1 & 1 \\
0 & 0 & -1 & 0 \\ 
1 & -1 & 0 & 0 \\
1 & 0 & 0 & 0
\end{array}
       \right)
       \left(
        \begin{array}{@{\,}cccccccc@{\,}}
        1 & -1 & 0 & 0 \\
        0 & 1 & 0 & 0 \\
        0 & 0 & 1 & 1 \\
        0 & 0 & 0 & 1 
\end{array}
       \right) \\
       &= 
       \left(
        \begin{array}{@{\,}cccccccc@{\,}}
        0 & 0 & 1 & 2 \\
        0 & 0 & -2 & -3 \\
        1 & -2 & -1 & 0 \\
        2 & -3 & 0 & 0 
\end{array}
       \right). 
\end{split}
\end{equation*}
So $\Im F_{t, s}$ is equal to $2(x_{1}y_{1} + 2x_{1}y_{2} -2x_{2}y_{1} -3x_{2}y_{2})$ on a neighborhood of $\ww$.

Put $z_{j} = x_{j} + iy_{j}$ for $j = 1, 2$. We calculate $F_{t, s}(\zz) = \Re F_{t, s} + i\Im F_{t, s}$ 
on a neighborhood of $\ww$: 
\begin{equation*}
\begin{split}
\Re F_{t, s} + i\Im F_{t, s} &= x_{1}^{2} - y_{1}^{2} - x_{2}^{2} + y_{2}^{2} + 2i(x_{1}y_{1} + 2x_{1}y_{2} -2x_{2}y_{1} -3x_{2}y_{2}) \\
&= (x_{1} + iy_{1})^{2} - (x_{2} + iy_{2})^{2} + 4i(x_{1}y_{2} - x_{2}y_{1} - x_{2}y_{2}) \\
&= z_{1}^{2} - z_{2}^{2} + 4i\Bigr(\frac{z_{1} + \overline{z}_{1}}{2}\frac{z_{2} - \overline{z}_{2}}{2i} 
- \frac{z_{2} + \overline{z}_{2}}{2}\frac{z_{1} - \overline{z}_{1}}{2i} - \frac{z_{2} + \overline{z}_{2}}{2}\frac{z_{2} - \overline{z}_{2}}{2i} \Bigr) \\
&= z_{1}^{2} - 2z_{2}^{2} - 2z_{1}\overline{z}_{2} + 2z_{2}\overline{z}_{1} + \overline{z}_{2}^{2}. 
\end{split}
\end{equation*}

We change coordinates of $\Bbb{C}^{2}$: 
\[
v_{1} = z_{1} - \overline{z}_{2}, v_{2} = z_{2}. 
\]
Then $F_{t, s}$ is equal to $v_{1}^{2} + 2\overline{v_{1}}v_{2}$ at $\ww$. 
So the algebraic set $\{ (v_{1}, v_{2}) \mid v_{1}^{2} + 2\overline{v_{1}}v_{2}~=~0~\}$ has two components: 
\[
\{ v_{1} = 0 \}, \ \ \{(v_{1}, v_{2}) = (2\tilde{r}e^{i\theta'}, -\tilde{r}e^{3i\theta'}) \mid 0 < \tilde{r}, 0 \leq \theta' \leq 2\pi \} . 
\]
We define the $S^{1}$-action on $\Bbb{C}^2$: 
\[
(v_{1}, v_{2}) \mapsto (\tilde{s}v_{1}, \tilde{s}^{3}v_{2}), \ \ \tilde{s} \in S^{1}. 
\]
Then the set of the zero points of $F_{t,s}(\zz)$ at $\ww$ is an invariant set of the $S^{1}$-action. 
So the~link of~$F_{t,s}(\zz)$ at $\ww$ is the Seifert link in \cite{EN}. 
Since two components of the link of $F_{t, s}$ are trivial knots and the absolute value of the linking number is $1$,  
$\ww$ defines a Hopf link as an unoriented link. 

Let $B^{4}_{\delta}$ be the $4$-dimensional ball such that $F_{t, 0}^{-1}(0) \cap \partial B^{4}_{\delta}$
is isotopic to   
$F_{t \Delta}^{-1}(0) \cap \partial B^{4}_{\delta}$ 
and the~intersection of $B^{4}_{\delta}$ and the singularities of $F_{t, s}(\zz)$ is equal to $S_{2}(F_{t, s})$, 
where $F_{t \Delta}(\zz)$ is the~face function of $F_{t}(\zz)$. 
The restricted map $F_{t, s} : B^{4}_{\delta} \rightarrow D^{2}$ is an unfolding of 
$F_{t\Delta}^{-1}(0) \cap~\partial B^{4}_{\delta}$ in the sense of \cite{NR1}. 
By Lemma~\ref{l10}, $F_{t\Delta}^{-1}(0) \cap \partial B^{4}_{\delta}$ is isotopic to 
the~$\bigl(p(m-n), q(m-n)\bigr)$-torus link 
whose orientations coincide with 
that of links of holomorphic functions. 
Then there exists an unfolding which has only positive Hopf links and the enhancement to 
the Milnor number is equal to $0$ \cite[Theorem 5.6.]{NR1}. 
Note that the~enhancement to the Milnor number is a homotopy invariant of fibered links. 
Assume that there exists singularities of $F_{t, s}(\zz)$ such that they define negative Hopf links. 
Then the enhancement to the Milnor number is positive \cite[Theorem 5.4.]{NR1}. 
The homotopy type of $F_{t,0}^{-1}(0) \cap \partial B^{4}_{\delta}$ is different from that of links 
of holomorphic functions. 
By Lemma \ref{l10}, this is a contradiction. 
Any point of $S_{2}(F_{t, s})$ defines a~positive Hopf link. 
Thus $\ww$ is a~mixed Morse singularity. 
\end{proof}

\begin{proof}[Proof of Theorem 2]
Let $\ell(\zz) = c_{1}z_{1} + c_{2}z_{2}$ be a linear function in Lemma \ref{l11}. 
Any point of $S_{1}(F_{t, s})$ is an indefinite fold singularity. 
By Lemma \ref{l12}, isolated singularities of $F_{t, s}(\zz)$ are mixed Morse singularities. 
Thus $F_{t, s}(\zz)$ is a mixed broken Lefschetz fibration. 
\end{proof}

\end{document}